 \def\B{{\mathcal B}}
 \def\C{{\mathcal C}}
 \def\L{{\mathcal L}}
 \def\P{{\mathcal P}}
 \def\U{{\mathcal U}}
 \DeclareMathOperator\Lk{Lk}
 \DeclareMathOperator\GL{GL}
 \DeclareMathOperator{\rk}{rk}
 \newcommand{\tq}{\mathrel{{\ensuremath{\: : \: }}}}
  \newcommand{\numberlike}[2]{%
     \expandafter\def\csname c@#1\endcsname{%
         \expandafter\csname c@#2\endcsname}%
  }
  \def\DefaultNumberTheoremWithin{section}
  \theoremstyle{plain}
  \newtheorem{lemma}{Lemma}
     \numberwithin{lemma}{\DefaultNumberTheoremWithin}
     \numberwithin{claim}{\DefaultNumberTheoremWithin}
  \newtheorem{theorem}{Theorem}
     \numberwithin{theorem}{\DefaultNumberTheoremWithin}
  \newtheorem{corollary}{Corollary}
     \numberwithin{corollary}{\DefaultNumberTheoremWithin}
  \newtheorem{proposition}{Proposition}
     \numberwithin{proposition}{\DefaultNumberTheoremWithin}
     \numberwithin{conjecture}{\DefaultNumberTheoremWithin}
  \newtheorem*{warning*}{Warning}
  \theoremstyle{definition}
  \newtheorem{definition}{Definition}
     \numberwithin{definition}{\DefaultNumberTheoremWithin}
  \theoremstyle{definition}
  \newtheorem{question}{Question}
     \numberwithin{question}{\DefaultNumberTheoremWithin}
  \theoremstyle{definition}
     \numberwithin{problem}{\DefaultNumberTheoremWithin}
  \theoremstyle{remark}
  \newtheorem{remark}{Remark}
     \numberwithin{remark}{\DefaultNumberTheoremWithin}
  \theoremstyle{remark}
  \newtheorem{example}{Example}
     \numberwithin{example}{\DefaultNumberTheoremWithin}
\newlength{\dhatheight}
\newcommand{\red}[1]{#1^*}
\newcommand{\redm}[1]{#1^\circ}
\newcommand{\Ord}{\mathcal{O}}
\newcommand{\cl}{\mathrm{Cl}}
\newcommand{\fin}{fin}
\begin{document}

\title[Notes on the topology of independence structures]{Notes on the topology of independence structures}

\author{Kevin I. Piterman}
\address{Vrije Universiteit Brussel\\
Department of Mathematics \& Data Science\\
1050 Brussels, Belgium}
\email{kevin.piterman@vub.be}

\author{Volkmar Welker}
\address{Philipps-Universit\"at Marburg \\
Fachbereich Mathematik und Informatik \\
35032 Marburg, Germany}
\email{welker@mathematik.uni-marburg.de}

\begin{abstract}
Following Welsh, a pre-independence space (pi-space) is a set $M$ together with a non-empty collection $I(M)$ of subsets of $M$, called independent sets, which is closed under taking subsets, and finite independent sets satisfy the exchange property from matroid theory.

We show that $I(M)$, viewed as a poset, is contractible if it is infinite-dimensional, and Cohen-Macaulay otherwise.
Moreover, the proper part of the associated poset of flats is also contractible in the infinite-dimensional case, and Cohen-Macaulay otherwise.

These results generalize those for independence complexes and geometric lattices of (finite) matroids.
% We show that a finite-dimensional simplicial complex $K$ with the exchange property is Cohen-Macaulay, and hence a wedge of spheres.
% Moreover, its corresponding lattice of flats is also a Cohen-Macaulay poset.

% If instead the complex with the exchange property is infinite-dimensional, we show that $K$ and its lattice of flats are contractible.
\end{abstract}

\maketitle

\section{Introduction}

For a matroid $M$, the homological and topological properties of the simplicial complex $I(M)$ of its independent sets  
and the order complex of the proper part of its lattice of flats $\L(M)$ 
have been studied intensively in combinatorics (see e.g., \cite{Bjorner1} for a survey or \cite{ADH} for recent uses). Both are homotopy equivalent to a wedge of spheres. Moreover, they are shellable and hence homotopically Cohen-Macaulay. 
In addition, the topology of the extended Bergman complex of $M$ -- a combination of both $I(M)$ and $\L(M)$ -- has turned into the focus (see \cite{BHMPW, BKR}). In all those studies, matroids are finite structures. Indeed, classically, matroid theory is confined to the finite setting.

The question of how to extend the matroid 
axioms to the infinite setting while 
preserving its coveted properties has been the subject of intensive research
(see, for example, \cite{infM, Oxley,Welsh}). In these notes, we set up a context of ``infinite matroids" $M$ that recovers the topological properties of the independence complex $I(M)$ and of the lattice of flats $\L(M)$.

Further motivation comes from the case when $M$ is the projective geometry associated to a finite-dimensional vector space $V$ over a not-necessarily-finite field. Then  -- using suitable definitions -- the lattice of flats
is the poset of linear subspaces of $V$. The order complex of its proper
part is the building associated to $\GL(V)$ and hence has the homotopy type of a wedge of spheres (see \cite{G,S}).
The fact that in this case $I(M)$ is Cohen-Macaulay too, can for example be deduced from \cite{vdk}. There, more generally, Cohen-Macaulayness is shown for the complex
of partial bases or unimodular sequences for finite-rank free modules over certain Dedekind domains.

Even in this classical setting of vector spaces, the case of infinite-dimensional
vector spaces seems to be unexplored.

Our approach to ``infinite matroids" follows ideas by Welsh
\cite[Chapter 20]{Welsh} where he defines pre-independence spaces, pi-spaces for short,
over a (possibly finite set) $M$ through a set $I(M)$ of
independent sets (see \ref{sec:pi-spaces} for definitions).
This concept includes the case when $M$ is a finite-dimensional
vector space over an arbitrary field and $I(M)$ the set of its linearly
independent subsets.
%He defines a pre-independence space (or pi-space for short) as a (not necessarily finite) set $M$ together with a non-empty collection $I(M)$ of subsets of $M$ satisfying 
%\begin{enumerate}
%    \item[(I1)] $I(M)$ is closed under taking subsets, and
%    \item[(I2)] if $\sigma,\tau\in I(M)$ and $|\sigma|> |\tau|$ then there is $x\in \sigma \setminus \tau$ such that $\tau \cup \{x\}\in I(M)$.
%\end{enumerate}

%Note that (I1) says that $I(M)$ is a simplicial complex.
%Property (I2) is usually called the exchange axiom.
%It was proved that $I(M)$ is shellable and, in particular, Cohen-Macaulay.
%Moreover, the poset of flats associated with $(M,I(M))$ is a geometric lattice, and it is also shellable.
%See \cite{Bjorner1}.

%Motivated by the infinite-field case, one can extend the above definition of a matroid to infinite sets.
%Following Welsh \cite{Welsh}, a pre-independence space (or pi-space for short) is a (not necessarily finite) set $M$ together with a non-empty collection $I(M)$ of subsets of $M$ satisfying (I1) and (I2) above.
%Note that (I1) is still applied for finite subsets.

%As usual, we call elements of $I(M)$ independent and subsets of $M$ not contained in $I(M)$ dependent. 
For infinite $M$, the set system $I(M)$ may no longer be a simplicial complex, though. For that reason, we consider $I(M)$ as a poset with the order given by inclusion.
Via the order complex of $I(M) \setminus \{\emptyset\}$, we can still regard $I(M)$ as a topological space.
Note that we will always have $\emptyset \in I(M)$. Thus, in order
to capture the essential part of the topology of $I(M)$, 
the empty set must be excluded. 
In case every independent set in $M$ is finite, $I(M)$ is a simplicial complex and the order complex of $I(M) \setminus \{\emptyset\}$ 
is the barycentric subdivision of $I(M)$. 
When $I(M)$ is finite-dimensional, then it is even a pure complex.
%and 
%we may call $(M,I(M))$ a matroid.
%However, to avoid confusion, we will not use such a name here, and in %these notes, by a matroid we mean a finite matroid.

The following theorem describes the homotopy type of $I(M)$.

\begin{theorem}
\label{thm:homotopyIndependentSetsPoset}
Let $(M,I(M))$ be a pi-space.
\begin{enumerate}
    \item If $I(M)$ is finite-dimensional, then $I(M)$ is a homotopically Cohen-Macaulay simplicial complex.
    \item If $I(M)$ is infinite-dimensional, then $I(M)$ is a contractible poset.
\end{enumerate}
\end{theorem}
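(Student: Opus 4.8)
\emph{Strategy.} Both parts reduce to the classical theory of finite matroids, exploiting that every pi-space is a directed union of finite matroids. I first record three routine facts. First, if $\dim I(M)=r-1<\infty$ then $I(M)$ is \emph{pure} of dimension $r-1$: a maximal independent set of size $k<r$, played against one of size $r$, would violate the exchange property. Second, for a face $\sigma\in I(M)$ the link $\Lk_{I(M)}(\sigma)$ is precisely the independence complex of the contraction $M/\sigma$, whose independent sets are the $T$ with $T\cap\sigma=\emptyset$ and $T\cup\sigma\in I(M)$; a one-line check shows $M/\sigma$ is again a pi-space, of rank $r-|\sigma|$ when $\dim I(M)=r-1$. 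Third, for every finite $M'\subseteq M$ the restriction $(M',\{S\in I(M):S\subseteq M'\})$ is a pi-space on a finite ground set --- a finite matroid --- and $I(M)$ is the directed union of the subcomplexes $I(M)|_{M'}$, since any compact subset of $|I(M)|$ (or of its order complex) meets only finitely many vertices.

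\emph{A connectivity lemma, and part (1).} The technical core is: \emph{for every pi-space $N$ of finite rank $s$, the complex $I(N)$ is $(s-2)$-connected.} Granting this, part (1) is immediate: for any face $\sigma$ of $I(M)$ we have $\Lk\sigma=I(M/\sigma)$ with $\dim\Lk\sigma=r-1-|\sigma|$ by purity, and the lemma applied to $N=M/\sigma$ (of rank $r-|\sigma|$) says $\Lk\sigma$ is $(r-|\sigma|-2)$-connected $=(\dim\Lk\sigma-1)$-connected, which is homotopical Cohen--Macaulayness. To prove the lemma, write $I(N)=\bigcup_{M'}I(N)|_{M'}$ as above; the restrictions of rank exactly $s$ are cofinal (adjoin a basis of $N$ to any given finite set), and for such $M'$ the complex $I(N)|_{M'}$ is the independence complex of a finite matroid, hence shellable and a wedge of $(s-1)$-spheres, in particular $(s-2)$-connected, by \cite{Bjorner1}. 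Any map $S^i\to|I(N)|$ with $i\le s-2$ has compact image, factors through some $I(N)|_{M'}$ which we may enlarge to have rank $s$, and is therefore nullhomotopic there; so $I(N)$ is $(s-2)$-connected.

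\emph{Part (2): the infinite-dimensional case.} The new ingredient is to first eliminate infinite independent sets. Let $P$ be the subposet of $I(M)\setminus\{\emptyset\}$ consisting of the nonempty \emph{finite} independent sets. For $T\in I(M)\setminus\{\emptyset\}$ the fiber $\{S\in P:S\subseteq T\}$ is the poset of finite nonempty subsets of $T$; if $T$ is finite this has a maximum, and if $T$ is infinite it is contractible since $S\mapsto S\cup\{t_0\}$, for a fixed $t_0\in T$, is a poset self-map comparable to both the identity and a constant. Either way the fiber is contractible, so Quillen's fiber lemma gives $I(M)\setminus\{\emptyset\}\simeq|P|$, the barycentric subdivision of the simplicial complex $K$ of finite independent sets; thus $I(M)\setminus\{\emptyset\}\simeq|K|$. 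Since $I(M)$ is infinite-dimensional, so is $K$ (an infinite independent set, if present, has finite subsets of unbounded size), so the finite matroids $K|_{M'}$ have unbounded rank; by \cite{Bjorner1} each $K|_{M'}$ of rank $s$ is $(s-2)$-connected. The compactness/cofinality argument of the previous paragraph then shows $K$ is $n$-connected for every $n$, hence contractible by Whitehead's theorem, and therefore $I(M)\setminus\{\emptyset\}$ is contractible.

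\emph{Main obstacle.} The conceptually new step is the Quillen-fiber reduction to finite independent sets in part (2), which has no analogue in the finite theory; the rest of the novelty is in assembling the finite pieces, i.e.\ the two colimit arguments where connectivity is pushed to infinity. The remaining work is careful bookkeeping: verifying that contraction and restriction stay inside the class of pi-spaces (so that the rank argument and the passage to finite matroids are legitimate), that $I(M)$ is pure in the finite-dimensional case, and that ``homotopically Cohen--Macaulay'' for a possibly infinite but finite-dimensional complex is captured exactly by the link-connectivity conditions matched above --- points that are elementary individually but must all be in place for the reduction to close.
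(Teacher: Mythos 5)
Your proposal is correct and follows essentially the same route as the paper: a Quillen-fiber reduction to the subposet of finite independent sets (with fibers the contractible posets of finite subsets of an infinite set), a compactness argument factoring sphere maps through finite restrictions, which are finite matroids and hence spherical by shellability, together with an enlargement of the finite ground set to boost the rank, and the observation that links of faces are again pi-spaces of the expected rank. The only differences are cosmetic (e.g., you enlarge the finite restriction by adjoining a maximal independent set at once, where the paper enlarges iteratively, and you phrase the key step as a connectivity lemma for finite-rank pi-spaces applied to links rather than proving sphericity of $I(M)$ first and then reapplying it to links).
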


As a consequence, we obtain analogous results on the homotopy type of the poset of injective words on $I(M)$ (see \ref{coro:injectiveWordsHomotopy}).
In particular, this shows that the poset of unimodular sequences of a vector space is Cohen-Macaulay in the finite-dimensional case, and it is contractible otherwise. Except for Cohen-Macaulayness this 
also follows from \cite[p. 274]{vdk}.

The second important geometric structure associated with a matroid is its lattice of flats.
Recall that a flat of a matroid $M$ is a subset $F\subseteq M$ such that for any $x\in M\setminus F$ and independent set $\sigma\subseteq F$, $\sigma\cup \{x\}$ is an independent set.
For pi-spaces, we consider the same definition, and denote by $\L(M,I(M))$ the poset of all flats with order given by inclusion.
Note that $M$ is the unique maximal element of this poset.
Furthermore, the set $0_{I(M)} := \big\{\,x\in M\, \tq\, \{x\}\notin I(M)\,\big\}$ is the unique minimal element of $\L(M,I(M))$.
Thus, $\L(M,I(M))$ is a bounded poset,
and when we speak about its topology, we mean the topology of the order complex of
its proper part $\red{\L(M,I(M))} = \L(M,I(M)) \setminus \{M,0_{I(M)}\}$.

We have the following theorem.

\begin{theorem}
\label{thm:homotopyPosetOfFlats}
Let $(M,I(M))$ be a pi-space.
Then $\L(M,I(M))$ is a complete lattice and the following hold:
\begin{enumerate}
\item If $I(M)$ is finite-dimensional, then $\L(M,I(M))$ is homotopically Cohen-Macaulay.
\item If $I(M)$ is infinite-dimensional, then $\red{\L(M,I(M))}$ is contractible.
\end{enumerate}
\end{theorem}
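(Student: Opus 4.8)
\emph{Preliminaries on flats.} I would prove the two parts by different routes: the finite-dimensional case by recognising $\L(M,I(M))$ as a finite-length geometric lattice, the infinite-dimensional case by a Quillen fibre argument localising onto flats of finite rank. First I record that $\L(M,I(M))$ is a complete lattice. An arbitrary intersection $F=\bigcap_i F_i$ of flats is again a flat: for $x\in M\setminus F$ and an independent set $\sigma\subseteq F$, choose $i$ with $x\notin F_i$; then $\sigma\subseteq F_i$ and $F_i$ a flat give $\sigma\cup\{x\}\in I(M)$. Since $M$ is itself a flat, $\L(M,I(M))$ is closed under arbitrary meets and has a maximum, so it is a complete lattice, with join $F\vee G=\cl(F\cup G)$. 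Transcribing the matroid arguments, which use only the exchange axiom, gives: $\L(M,I(M))$ is atomistic and semimodular; if $I(M)$ is finite-dimensional it has finite length, equal to the rank $r$ of $M$, and is graded by rank; and for a flat $F$ the restriction $M|F=(F,\{\sigma\in I(M)\tq\sigma\subseteq F\})$ is a pi-space whose flats are precisely the flats of $M$ contained in $F$, and which is infinite-dimensional if and only if $\rk F=\infty$.

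\emph{The finite-dimensional case.} When $I(M)$ is finite-dimensional, $\L(M,I(M))$ is a geometric lattice (atomistic and semimodular) of finite length. Bj\"orner's EL-labelling of geometric lattices --- for a fixed ordering of the atoms, sending a covering $x\lessdot y$ to the least atom below $y$ but not below $x$ --- uses only the exchange axiom and the finiteness of $r$, not the finiteness of the atom set; after well-ordering the atoms it therefore makes $\L(M,I(M))$ shellable, and the passage from such a labelling to homotopic Cohen--Macaulayness is valid for any bounded poset of finite length. This proves part~(1).

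\emph{The infinite-dimensional case.} Let $Q\subseteq\red{\L(M,I(M))}$ be the subposet of \emph{proper flats of finite rank}; it is nonempty because $M$ has positive rank and hence atoms. First, $Q$ is contractible: fixing an atom $p_0$, infinite-dimensionality gives $\rk(F\vee p_0)\le\rk F+1<\rk M$ for every finite-rank flat $F$, so $F\mapsto F\vee p_0$ is an order-preserving self-map of $Q$; it is idempotent and lies above the identity, i.e.\ it is a closure operator, so $Q$ is homotopy equivalent to its image $\{F\in Q\tq F\ge p_0\}$, which has minimum $p_0$ and is therefore contractible. Second, the inclusion $Q\hookrightarrow\red{\L(M,I(M))}$ is a homotopy equivalence: by Quillen's fibre lemma it is enough that the fibre $\{G\in Q\tq G\le F\}$ be contractible for each proper flat $F$, and if $\rk F<\infty$ this fibre is the principal ideal $\red{\L(M,I(M))}_{\le F}$, which has maximum $F$, while if $\rk F=\infty$ it is --- using that the flats of $M|F$ are the flats of $M$ below $F$ --- the subposet of proper finite-rank flats of the infinite-dimensional pi-space $M|F$, contractible by the first point applied to $M|F$. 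Hence $\red{\L(M,I(M))}\homotequiv Q\homotequiv\ast$, giving part~(2).

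\emph{Where the work lies.} The topology above is light; the care is needed for the two transcriptions on which it rests. For part~(1) one must check that Bj\"orner's shelling of a geometric lattice survives the passage to possibly infinitely many atoms, so that the lattice --- no longer a finite complex --- still has a shelling of its (possibly transfinitely many) maximal chains forcing homotopic Cohen--Macaulayness; I expect this to be the main obstacle. For part~(2) one must verify that restriction of pi-spaces behaves as in matroid theory, in particular that the flats of $M|F$ are exactly the flats of $M$ lying below $F$, so that the fibres in the second step are correctly identified; note that, unlike part~(1), this argument makes no use of \ref{thm:homotopyIndependentSetsPoset}.
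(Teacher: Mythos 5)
Your part (2) is essentially the paper's own argument: the paper also passes to the subposet of nonzero finite-rank flats, shows it is contractible by joining with a fixed flat of positive finite rank (your closure operator $F\mapsto F\vee p_0$ is the paper's homotopy $F\subseteq F\vee G\supseteq G$), and then applies Quillen's fiber theorem to the inclusion into $\red{\L(M,I(M))}$, identifying the fiber over an infinite-rank flat exactly as you do (\ref{thm:contractibleFlats}). The one step you assert rather than prove is the one that makes all of this work: that $F\vee p_0$ again has finite rank, i.e.\ $\rk(F\vee p_0)\leq \rk(F)+1$. In a general pi-space this is not a harmless transcription of matroid semimodularity, because closure is badly behaved: $\cl$ need not be idempotent, closed sets need not be flats, and your parenthetical formula $F\vee G=\cl(F\cup G)$ is unjustified in this generality (cf.\ \ref{ex:infiniteAscChainNonClosed}); the join is a priori only the intersection of all flats containing $F\cup G$. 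The paper devotes \ref{lm:finiteIndFlatProperties}, \ref{lm:finiteRankSets}, \ref{coro:finiteRankFlats} and \ref{thm:contractibleFlats}(1) to precisely this point, proving that $\cl(\sigma)$ is a flat of rank $|\sigma|$ for finite independent $\sigma$ and that $F\vee G=\cl(\sigma\cup\tau)$ when $\sigma,\tau$ realize the finite ranks of $F,G$; with these lemmas your argument goes through. Your identification of the fibers via restriction is indeed immediate, as in \ref{lm:lower_interval_flats}.

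For part (1) you take a genuinely different route. The paper does not shell anything: in the finite-rank case (\ref{thm:homotopyTypeFlatsFiniteRank}) it argues by induction on rank, describing lower intervals as flats of restrictions and upper intervals as flats of contractions (\ref{lm:upper_interval_flats}, using that finite-rank pi-spaces are finitary matroids), and it gets sphericity of the proper part from Quillen's fiber \ref{thm:quillen}(1) applied to the span map $\Phi\colon I(M)^{(n-2)}\to\red{\L(M,I(M))}$, so that \ref{thm:homotopyIndependentSetsPoset} carries the topological weight. Your route instead upgrades \ref{prop:geometric} (whose proof, as the paper notes, already needs the equivalence of ``flat'' and ``closed'' in finite rank, again resting on the closure lemmas above) to an EL-labelling with respect to a well-ordering of the possibly infinitely many atoms, and from there to homotopy Cohen--Macaulayness. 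That is plausible, and if it works it buys a shelling rather than just sphericity; but the load-bearing claim --- that the lexicographic-shellability machinery (minimal-atom labelling is EL, the lex order on maximal chains is a transfinite shelling, and a shellable non-compact finite-dimensional complex is a wedge of top-dimensional spheres, for the poset and all its intervals) works for finite-length posets with infinitely many maximal chains --- is exactly what you would have to build; the paper only points to Remark 4.21 of \cite{BjornerInf} for the notion and deliberately routes around it. So: part (2) matches the paper modulo the finite-rank-join lemma you assumed, and part (1) is a different, heavier-machinery route whose transfinite shellability input you correctly flag but do not supply, whereas the paper's proof needs only Quillen's fiber theorem together with \ref{thm:homotopyIndependentSetsPoset}.
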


In particular, \ref{thm:homotopyIndependentSetsPoset} and \ref{thm:homotopyPosetOfFlats} describe the homotopy type of the poset of linearly independent subsets and the poset of non-zero proper subspaces of vector spaces of arbitrary dimension.

We close this introduction with a few remarks.

First, our definition of flats does not coincide with that given by Welsh, and the bounded poset $\L(M,I(M))$ may not satisfy the combinatorial properties of a geometric lattice.
Indeed, \cite[p. 388]{Welsh} first defines the closure operator $\cl$, and then defines a flat as a subset $X$ of $M$ for which $\cl(X) = X$ (that is, $X$ is a closed subset, see \ref{def:closedSubsets}).
This definition is in fact invoked in the context of independence spaces, which are pi-spaces $(M,I(M))$ that additionally satisfy the following property:

\begin{enumerate}
    \item[(FC)] If $\sigma\subseteq M$ and every finite subset of $\sigma$ lies in $I(M)$, then $\sigma\in I(M)$.
\end{enumerate}

These structures are also called \textit{finitary matroids} (see, for instance, \cite{Bean, Higgs, Klee, OxleyOld, Oxley}).

We prove that for finite-rank subsets, both definitions of flat coincide in the context of pi-spaces (see \ref{coro:finiteRankFlats} and \ref{thm:homotopyTypeClosedSets}(1)).
Also, for finitary matroids, flats and closed subsets are the same.

In any case, if $\C(M,I(M))$ denotes the poset of closed subsets of $M$, we recover an analogue of \ref{thm:homotopyPosetOfFlats} with $\L(M,I(M))$ replaced by $\C(M,I(M))$.
See \ref{thm:homotopyTypeClosedSets}.
%\red{\L(M,I(M))}$ is the same regardless of which of the two definitions for flats is used (see ??).

Second, ``infinite matroids" have also been considered in other works \cite{infM, OxleyOld, Oxley, Welsh}.
For instance, the definition of an infinite matroid given in \cite{infM} requires condition (M) there, which assures the existence of certain maximal elements in $I(M)$.
In this paper, we will not discuss the possible definitions of infinite matroids, but just focus on the homotopy type of the associated posets for pi-spaces and finitary matroids.

Finally, we mention that in the finite-dimensional case of \ref{thm:homotopyIndependentSetsPoset}, the poset $I(M)$ may even be shellable. A notion of shellability for non-compact but finite-dimensional simplicial complexes is discussed, for instance, in Remark 4.21 of \cite{BjornerInf}.

For the rest of the paper, whenever we speak of a matroid, we mean a matroid in the classical finite sense with the exception of the term ``finitary matroid", which is an established notion from the literature.

The paper is organized as follows.
In \ref{sec:pi-spaces}, we define pre-independence spaces and finitary matroids, and derive properties of $I(M)$ that culminate in the proof of \ref{thm:homotopyIndependentSetsPoset}.
In \ref{sec:flats}, we provide our notion of flats, circuits, and closure. We derive properties of the poset of flats, and its relation with the closure operator (in the general context of pi-spaces, a closed set is not always a flat). We include intermediate results which
smoothen the way to the proof of \ref{thm:homotopyPosetOfFlats}.
In \ref{sec:closedSets}, we discuss further results on the poset of closed subsets together with some examples.
Finally, we include an appendix, \ref{app:homotopy}, with notation and
definitions for posets and simplicial complexes, and homotopy tools.

\bigskip

\noindent
\textbf{Acknowledgments.}
The first author was supported by the FWO grant 12K1223N.

\section{Pre-independence spaces} \label{sec:pi-spaces}

In this section, we recall some of the main definitions of \cite[Chapter 20]{Welsh} and set up the notation.
We also provide the proof of \ref{thm:homotopyIndependentSetsPoset}.
We refer the reader to the appendix for general notation on posets and simplicial complexes, as well as for the topological notions invoked here.

\begin{definition}
A \textit{pre-independence space (pi-space)} is a pair $(M,I(M))$ where $M$ is a non-empty set and $I(M)$ is a non-empty set of subsets of $M$ satisfying the following two conditions:
\begin{enumerate}
    \item[(I1)] If $\sigma \in I(M)$ then every subset of $\sigma$ lies in $I(M)$.
    \item[(I2)] If $\sigma,\tau \in I(M)$ are finite and $|\sigma|>|\tau|$ then there exists $x\in \sigma \setminus \tau$ such that $\tau \cup \{x\}\in I(M)$.
\end{enumerate}
Condition (I2) is usually called the 
``Exchange property".

The elements of $I(M)$ are called \textit{independent sets}, and we regard $I(M)$ as a poset with ordering given by inclusion.
A subset of $M$ is called \textit{dependent} if it is not in $I(M)$.
%When elements of $I(M)$ are finite, we shall that $(M,I(M))$ is a weak matroid.
%If independent sets are finite of bounded size, we say that $(M,I(M))$ is a matroid.
% In that case, $I(M)$ is a pure simplicial complex and the largest size of a simplex of $I(M)$ is called the \textit{rank} of $M$.
\end{definition}

If $I(M)$ contains no infinite sets, then $I(M)$ is a simplicial complex and we shall call $(M,I(M))$ a \textit{simplicial pi-space}.
In general, we consider $I(M)$ as a poset ordered by inclusion. 
First, note that $\emptyset \in I(M)$ is the unique minimal element of $I(M)$.
By a slight abuse of notation, we shall denote by $|I(M)|$ the order complex of the poset $I(M)\setminus \{\emptyset\}$.
When we speak of the dimension and topology of $I(M)$ we will mean
the dimension and topology of $|I(M)|$.
The \textit{rank} of $(M,I(M))$, denoted by $\rk_{I(M)}(M)$, or simply $\rk(M)$ when $I(M)$ is implicit, is the dimension of $I(M)$ plus one.
Observe that when $\rk(M)$ is finite, $I(M)$ is a simplicial complex, $|I(M)|$ is the barycentric subdivision of $I(M)$, and $\rk(M)$ is one plus the dimension of $I(M)$.

\begin{remark}
\label{rk:alternativeExchange}
If $(M,I(M))$ is a pi-space of finite rank, then condition (I2) is equivalent to:
\begin{enumerate}
    \item[(I2')] If $\sigma\in I(M)$ is a maximal simplex and $\tau \in I(M)$ is such that $|\sigma|>|\tau|$ then there exists $x\in \sigma\setminus \tau$ such that $\tau \cup \{x\}\in I(M)$.
\end{enumerate}
%% Proof
% Indeed (I2) implies (I2').
% For the converse, let $\sigma,\tau\in I(M)$ with $|\sigma|>|\tau|$.
% Since $I(M)$ is finite-dimensional, we can take a maximal simplex $\sigma' \in I(M)$ containing $\sigma$.
% Assuming (I2'), there is $x\in \sigma'\setminus \tau$ such that $\tau' := \tau \cup \{x\}\in I(M)$.
% If $x\in \sigma$ then we are done.
% Otherwise, $\sigma'':=\sigma \cup \{x\}\in I(M)$ strictly contains $\sigma$.
% Arguing by induction on the size $|\sigma'|-|\sigma|$, we see that $|\sigma''|>|\tau'|$ and thus there is $y\in \sigma''\setminus \tau'$ such that $\tau'\cup \{y\}\in I(M)$.
% Clearly $y\neq x$, and hence $y\in \sigma\setminus \tau$, and since $\tau\leq \tau' \cup \{y\}$ we conclude that $\tau \cup \{y\}\in I(M)$.
\end{remark}

% Note that the finite-dimensional hypothesis in the previous remark is employed to invoke the maximality of simplices.

Below, we recall the definition of finitary matroids
(see \cite{Bean, Higgs, Klee, Oxley, Welsh}).

\begin{definition}
\label{def:finitaryMatroid}
Let $(M,I(M))$ be a pi-space.
We say that $(M,I(M))$ is a \textit{finitary matroid}, or an \textit{independence space}, if the following condition holds:
\begin{enumerate}
    \item[(FC)] If $\sigma \subseteq M$ and all finite
    subsets of $\sigma$ are in $I(M)$, then $\sigma \in I(M)$.
\end{enumerate}
\end{definition}

Note that a finite-rank pi-space is always a finitary matroid.

\begin{proposition}[Theorem 3, p. 387 \cite{Welsh}]
\label{prop:finitaryMatroids}
Let $(M,I(M))$ be a finitary matroid.
Then $(M,I(M))$ has bases (i.e., maximal independent sets), two of them have the same cardinality, and every independent set is contained in a basis.
We write $\B(M,I(M))$ for the set of bases of $(M,I(M))$.
\end{proposition}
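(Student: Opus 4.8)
The plan is to handle the three assertions in two stages: existence (and extendability) of bases via Zorn's lemma together with (FC), and then equicardinality by a counting argument that splits into a finite and an infinite case.

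For existence, I would fix an independent set $\sigma_0$ and consider the poset of independent sets containing $\sigma_0$, ordered by inclusion. Every chain $\{\sigma_i\}_i$ has an upper bound: the union $\sigma = \bigcup_i \sigma_i$ is independent, because any finite subset of $\sigma$ is contained in a single $\sigma_i$ by finiteness, hence lies in $I(M)$ by (I1), and therefore $\sigma \in I(M)$ by (FC). Zorn's lemma then produces a maximal such independent set, which is a basis containing $\sigma_0$; the choice $\sigma_0 = \emptyset$ yields a basis outright. This part uses only (I1) and (FC).

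Next, let $B, C \in \B(M,I(M))$. If $B$ is finite, say $|B| = n$, then $C$ cannot have more than $n$ elements: a finite subset $\tau \subseteq C$ with $|\tau| = n+1$ would, by (I2), provide $x \in \tau \setminus B$ with $B \cup \{x\} \in I(M)$, contradicting maximality of $B$; hence $C$ is finite with $|C| \le |B|$, and by symmetry $|B| = |C|$ (in particular a finite and an infinite basis cannot coexist). Now suppose $B$ and $C$ are both infinite; I will prove $|C| \le |B|$, equality then following by symmetry. Say a finite set $F \subseteq M$ \emph{spans} $y \in M$ if $y \in F$ or $F \cup \{y\}$ is dependent. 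For each $b \in B$ there is a finite $F_b \subseteq C$ spanning $b$: if $b \in C$ take $F_b = \{b\}$; if $b \notin C$ then $C \cup \{b\}$ is dependent by maximality of $C$, so by (FC) some finite subset of $C \cup \{b\}$ is dependent, and since $C$ is independent this subset must contain $b$, so it equals $F_b \cup \{b\}$ for a finite $F_b \subseteq C$. Set $D := \bigcup_{b \in B} F_b \subseteq C$. Since $F_b \subseteq D$, for every $b \in B$ there is a finite subset of $D$ spanning $b$; and since $B$ is a basis, for every $z \in M$ there is a finite subset of $B$ spanning $z$. By the transitivity property discussed below, for every $z \in M$ there is a finite subset of $D$ spanning $z$. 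Applying this with $z = c$ for $c \in C$: there is a finite $F \subseteq D \subseteq C$ spanning $c$, and if $c \notin F$ then $F \cup \{c\} \subseteq C$ would be dependent, contradicting $C \in I(M)$; so $c \in F \subseteq D$. Hence $C = D = \bigcup_{b \in B} F_b$, and $|C| \le \sum_{b \in B} |F_b| \le \aleph_0 \cdot |B| = |B|$ since $B$ is infinite.

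The one substantive ingredient left is the transitivity property, which is also where the Exchange property (I2) enters: if $B$ is independent, each $b \in B$ is spanned by some finite $G_b \subseteq D$, and $z$ is spanned by some finite $H \subseteq B$, then $z$ is spanned by some finite $G \subseteq D$. I expect this to be the main obstacle, although it is really only a reduction to the finite case: taking $G := \bigcup_{b \in H} G_b$, one passes to the finite matroid induced on $G \cup H \cup \{z\}$, in which every element of $H$ lies in the closure of $G$ and --- since $H$ is independent --- $z$ lies in the closure of $H$; by idempotency of the closure operator of a finite matroid, a standard consequence of (I2), $z$ then lies in the closure of $G$, which is exactly the statement that $G$ spans $z$. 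Everything outside this lemma is routine bookkeeping.
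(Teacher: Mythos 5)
Your proof is correct. Note that the paper does not prove this proposition at all --- it simply quotes it as Theorem~3, p.~387 of \cite{Welsh} --- so there is no in-paper argument to compare against, and your write-up is a complete, self-contained substitute. The route you take is essentially the classical one for independence spaces/finitary matroids: Zorn's lemma, with (FC) guaranteeing that unions of chains of independent sets are independent, gives bases through any prescribed independent set; the case of a finite basis is settled directly by (I2); and in the infinite case you show $C=\bigcup_{b\in B}F_b$ via a ``finite spanning set'' argument whose transitivity step is reduced to monotonicity and idempotency of the closure operator in a finite restriction $(N,I(M,N))$, which is a genuine finite matroid by (I1) and (I2), and then conclude $|C|\le\aleph_0\cdot|B|=|B|$. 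The only delicate point is the one you implicitly handle: your ``spans'' relation coincides with closure membership only when the spanning set is independent, and all the sets you feed into the closure argument ($F_b$, $G_b$, $H$, $G$) are subsets of the bases $C$ or $B$, hence independent by (I1), so the reduction to the finite matroid is legitimate. No gaps.
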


\begin{remark}
[Infinite matroids]
There are several approaches to infinite matroids (see \cite{Oxley, Welsh}).
A more recent approach to the study of the combinatorics of such structures was given in \cite{infM}.
The Independence Axioms stated in Section 1.1 of \cite{infM} define a (possibly infinite) matroid to be a pair $(M,I(M))$ with the following properties: $I(M)$ is non-empty and downward closed (i.e., (I1) holds);
for every non-maximal independent set $\tau \in I(M)$ and maximal independent set $\sigma\in I(M)$, we can find $x\in \sigma\setminus \tau$ such that $\tau\cup \{x\}\in I(M)$; and
certain extra maximality condition holds, denoted by (M) there.
In the context of \cite{infM}, infinite matroids are not necessarily finitary (so (FC) may fail).
The absence of such a property and the presence of condition (M) somehow make duality possible for infinite matroids, settling a problem raised by Rado (see \cite[Problem P531]{Rado}).

It is not hard to verify that, when $(M,I(M))$ is a finite-rank pi-space, $(M,I(M))$ is a matroid in the sense of \cite{infM}.
In particular, one can invoke results from \cite{infM} in such a case.
\end{remark}

We continue with some properties for pi-spaces that will be useful for the proof of \ref{thm:homotopyIndependentSetsPoset}.
The following proposition includes some of the properties that we have already mentioned and describes some new pi-spaces defined from a given pi-space.

\begin{proposition}
\label{prop:basics}
Let $(M,I(M))$ be a pi-space.
Then the following hold:
\begin{enumerate}
    \item If every independent set is finite, then $I(M)$ is a simplicial complex.
    \item If $\rk(M)$ is finite, then $I(M)$ is a finite-dimensional and pure simplicial complex.
    \item If $N\subseteq M$ is a non-empty subset and $I(M,N)$ consists of the independent sets in $I(M)$ whose elements lie in $N$, then $(N,I(M,N))$ is a pi-space.
    If $(M,I(M))$ is a finitary matroid, then so is $(N, I(M,N))$.
    \item For any $k\geq 0$, the $k$-skeleton of $I(M)$, namely the set of independent sets of size $\leq k+1$, is a finite-rank pi-space on $M$.
    We shall denote this by $(M,I(M)^{(k)})$.
    \item If $\sigma \in I(M)$ then $$\Lk_{I(M)}(\sigma) = \{\tau \in I(M)\tq \sigma\cup \tau \in I(M), \, \tau\cap\sigma = \emptyset\}$$ is a pi-space on $M \setminus \sigma$.
    If $\sigma$ is finite then $\dim \Lk_{I(M)}(\sigma) = \dim I(M) - |\sigma|$.
\end{enumerate}
\end{proposition}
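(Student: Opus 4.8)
The plan is to verify each item directly against the axioms (I1), (I2) and (FC), letting the exchange property handle all cardinality bookkeeping and invoking Proposition~\ref{prop:finitaryMatroids} whenever it is convenient to extend an independent set to a basis; this is legitimate because a finite-rank pi-space is a finitary matroid. Items (1), (3) and (4) should be immediate. For (1), a nonempty family of finite subsets of $M$ that is downward closed under (I1) is by definition a simplicial complex. For (3), $I(M,N)$ is precisely the set of members of $I(M)$ contained in $N$, so (I1), (I2), and in the finitary case (FC), are inherited verbatim, every set they produce again lying in $N$. For (4), (I1) is clear, and for (I2) one observes that $\sigma,\tau\in I(M)^{(k)}$ with $|\sigma|>|\tau|$ forces $|\tau|\le k$, so the element $x$ supplied by (I2) in $M$ yields $\tau\cup\{x\}$ of size $\le k+1$, still in the $k$-skeleton; the rank is at most $k+1$ by construction.

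For (2) the one substantive point is purity. First, finite rank forbids infinite independent sets: an infinite $\sigma\in I(M)$ would, by (I1), produce strictly increasing chains of nonempty independent sets of arbitrary length, forcing $|I(M)|$ to be infinite-dimensional, contrary to finite rank. Hence all independent sets are finite, so by (1) $I(M)$ is a simplicial complex, of finite dimension $\rk(M)-1$. Being finite-rank it is finitary, so Proposition~\ref{prop:finitaryMatroids} gives that all bases have the same cardinality and that every independent set is contained in a basis; since independent sets have size at most $\rk(M)$ while some has size exactly $\rk(M)$, every basis has size $\rk(M)$, which is precisely purity.

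Item (5) carries the real weight. That $\Lk_{I(M)}(\sigma)$ is downward closed is immediate from (I1) in $M$ applied to $\sigma\cup\tau$. For the exchange property when $\sigma$ is finite, given finite $\tau_1,\tau_2\in\Lk_{I(M)}(\sigma)$ with $|\tau_1|>|\tau_2|$, apply (I2) in $M$ to the finite independent sets $\sigma\cup\tau_1$ and $\sigma\cup\tau_2$, of sizes $|\sigma|+|\tau_1|>|\sigma|+|\tau_2|$ by disjointness; the exchanged element then lies in $(\sigma\cup\tau_1)\setminus(\sigma\cup\tau_2)=\tau_1\setminus\tau_2$ and remains disjoint from $\sigma$, as required. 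For the dimension formula, $\tau\mapsto\sigma\cup\tau$ identifies $\Lk_{I(M)}(\sigma)$ with the subposet of $I(M)$ of independent sets containing $\sigma$, shifting cardinalities by $|\sigma|$. If $\rk(M)<\infty$, a basis $\beta\supseteq\sigma$ gives $\beta\setminus\sigma\in\Lk_{I(M)}(\sigma)$ of the maximal possible cardinality $\rk(M)-|\sigma|$, so $\dim\Lk_{I(M)}(\sigma)=\rk(M)-1-|\sigma|=\dim I(M)-|\sigma|$. If $I(M)$ is infinite-dimensional, for each $n$ choose a finite $\rho\in I(M)$ with $|\rho|\ge n+|\sigma|$ and iterate (I2) in $M$, keeping $\rho$ fixed and enlarging $\sigma\cup\{x_1,\dots,x_{i-1}\}$ one element at a time, to obtain distinct $x_1,\dots,x_n\in\rho\setminus\sigma$ with $\sigma\cup\{x_1,\dots,x_n\}\in I(M)$; thus $\Lk_{I(M)}(\sigma)$ is infinite-dimensional, again matching $\dim I(M)-|\sigma|$.

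The main obstacle I anticipate is the case of an \emph{infinite} $\sigma$ in (5): then $\sigma\cup\tau_1$ and $\sigma\cup\tau_2$ are infinite, so (I2), which constrains only finite sets, does not apply directly. The natural remedy is finite approximation — for each finite $F\subseteq\sigma$ the set $\{x\in\tau_1\setminus\tau_2 : F\cup\tau_2\cup\{x\}\in I(M)\}$ is a nonempty subset of the finite set $\tau_1\setminus\tau_2$, and these sets shrink as $F$ grows — but passing from ``valid for every finite $F$'' to ``valid for all of $\sigma$'' needs axiom (FC). So this is the step I expect to demand real care, and it is where I would scrutinize how far the statement reaches outside the finitary setting.
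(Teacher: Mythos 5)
The paper's own ``proof'' of this proposition is the single sentence that the facts are straightforward to verify, so there is no technique to compare against; your checks of (1)--(4), and of (5) when $\sigma$ is finite, are correct and are exactly the routine verifications being left to the reader. In particular, your use of \ref{prop:finitaryMatroids} is legitimate since the paper records right after \ref{def:finitaryMatroid} that a finite-rank pi-space is finitary, and your exchange argument for the link of a finite $\sigma$ (applying (I2) to $\sigma\cup\tau_1$ and $\sigma\cup\tau_2$ and noting the exchanged element lands in $\tau_1\setminus\tau_2$) together with your two dimension counts are exactly right.

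The step you left open---the exchange property of $\Lk_{I(M)}(\sigma)$ for \emph{infinite} $\sigma$---is not a defect of your argument: it genuinely cannot be derived from (I1) and (I2) alone. For a counterexample, let $M$ be infinite, fix an infinite $\sigma\subseteq M$ and distinct $a,b,c\in M\setminus\sigma$, and let $I(M)$ consist of all finite subsets of $M$ together with all subsets of $\sigma\cup\{a,b\}$ and all subsets of $\sigma\cup\{c\}$. This is a pi-space ((I1) is clear and (I2) is vacuous because every finite subset of $M$ is independent), yet $\Lk_{I(M)}(\sigma)=\{\emptyset,\{a\},\{b\},\{a,b\},\{c\}\}$ fails (I2) for $\tau_1=\{a,b\}$, $\tau_2=\{c\}$, since $\sigma\cup\{a,c\}$ and $\sigma\cup\{b,c\}$ are dependent. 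On the other hand, your proposed remedy does work once (FC) is assumed: the sets $A_F=\{x\in\tau_1\setminus\tau_2 \tq F\cup\tau_2\cup\{x\}\in I(M)\}$, for $F$ ranging over the finite subsets of $\sigma$, are nonempty, downward directed ($A_{F\cup F'}\subseteq A_F\cap A_{F'}$) subsets of the finite set $\tau_1\setminus\tau_2$, so one of minimal cardinality is contained in all of them; a common element $x$ then has every finite subset of $\sigma\cup\tau_2\cup\{x\}$ independent, and (FC) gives $\sigma\cup\tau_2\cup\{x\}\in I(M)$. So item (5) in full generality is correct for finitary matroids but overreaches for arbitrary pi-spaces with infinite $\sigma$; since the paper only ever applies it to finite $\sigma$ (in the proof of \ref{thm:homotopyIndependentSetsPoset}, after reducing to a simplicial, finite-dimensional $I(M)$), your completed portion covers everything actually used, and the gap you flagged lies in the blanket formulation of the statement rather than in your proof.
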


\begin{proof}
These facts are straightforward to verify.
\end{proof}

The following two structures will also be useful.

\begin{definition}
Let $(M,I(M))$ be a pi-space.
We define:
\begin{itemize}
    \item $I(M)_{\fin} = \{\sigma\in I(M)\tq |\sigma|<\infty\}$.
    \item $I(M)_W = \{ \sigma\subseteq M \tq $ every finite subset of $\sigma$ lies in $I(M)\}$.
\end{itemize}

We call $(M,I(M)_{\fin})$ the \textit{simplicial pi-space associated with $(M,I(M))$}.
%, and $(M,I(M)_W)$ its \textit{finitary completion}.
\end{definition}

The proof of the next lemma follows easily from the definitions above.

\begin{lemma}
Let $(M,I(M))$ be a pi-space.
Then the following hold:
\begin{enumerate}
    \item $I(M)_{\fin} \subseteq I(M) \subseteq I(M)_W$.
    \item $(M,I(M)_{\fin})$ is a simplicial pi-space, and  $(M,I(M))$ is simplicial if and only if $I(M) = I(M)_{\fin}$.
    \item $(M,I(M)_W)$ is a finitary matroid, and $(M,I(M))$ is a finitary matroid if and only if $I(M) = I(M)_W$.
    \item If $\rk_{I(M)}(M) < \infty$, then $I(M)_{\fin} = I(M) = I(M)_W$.
\end{enumerate}
\end{lemma}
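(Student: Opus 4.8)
The plan is to check each item by directly unwinding the definitions of $I(M)_{\fin}$ and $I(M)_W$, invoking the axioms (I1), (I2) and (FC). For part~(1), the inclusion $I(M)_{\fin}\subseteq I(M)$ is immediate from the definition of $I(M)_{\fin}$ as the set of finite members of $I(M)$; and if $\sigma\in I(M)$, then by (I1) every subset of $\sigma$ — in particular every finite one — lies in $I(M)$, so $\sigma\in I(M)_W$, giving $I(M)\subseteq I(M)_W$.

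For part~(2), I would verify that $(M,I(M)_{\fin})$ is a pi-space. It is non-empty since $\emptyset\in I(M)_{\fin}$; for (I1), a subset of a finite member of $I(M)$ is finite and, by (I1) for $I(M)$, lies in $I(M)$, hence in $I(M)_{\fin}$; for (I2), given finite $\sigma,\tau\in I(M)_{\fin}$ with $|\sigma|>|\tau|$, axiom (I2) for $I(M)$ yields $x\in\sigma\setminus\tau$ with $\tau\cup\{x\}\in I(M)$, and this set is finite, so it lies in $I(M)_{\fin}$. As all its members are finite, $(M,I(M)_{\fin})$ is a simplicial pi-space. Finally, $(M,I(M))$ is simplicial precisely when $I(M)$ has no infinite member, i.e. when $I(M)=I(M)_{\fin}$ (using the inclusion of part~(1)).

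For part~(3), I would check that $(M,I(M)_W)$ is a pi-space satisfying (FC). Non-emptiness: $\emptyset\in I(M)_W$. For (I1): if $\sigma\in I(M)_W$ and $\tau\subseteq\sigma$, then every finite subset of $\tau$ is a finite subset of $\sigma$, hence in $I(M)$, so $\tau\in I(M)_W$. For (I2): a finite member of $I(M)_W$ is a finite subset of itself and hence already lies in $I(M)$, so for finite $\sigma,\tau\in I(M)_W$ with $|\sigma|>|\tau|$ axiom (I2) for $I(M)$ applies and the resulting set $\tau\cup\{x\}$ lies in $I(M)\subseteq I(M)_W$. For (FC): if every finite subset $F$ of $\sigma$ lies in $I(M)_W$, then each such $F$, being finite, is a finite subset of $F\in I(M)_W$ and hence lies in $I(M)$; thus all finite subsets of $\sigma$ lie in $I(M)$, i.e. $\sigma\in I(M)_W$. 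For the equivalence, note that $(M,I(M))$ satisfies (FC) if and only if every $\sigma\subseteq M$ all of whose finite subsets are in $I(M)$ already lies in $I(M)$, i.e. $I(M)_W\subseteq I(M)$; combined with part~(1) this is $I(M)=I(M)_W$.

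For part~(4), if $\rk_{I(M)}(M)<\infty$ then every member of $I(M)$ is finite, so $I(M)=I(M)_{\fin}$; moreover a finite-rank pi-space is a finitary matroid (noted right after \ref{def:finitaryMatroid}), so part~(3) gives $I(M)=I(M)_W$ as well. Alternatively, an infinite $\sigma\in I(M)_W$ would contain a finite subset of cardinality $\rk_{I(M)}(M)+1$ lying in $I(M)$, contradicting the bound on the sizes of independent sets. I do not expect any genuine obstacle here: the whole argument is a routine unwinding of definitions, the only mildly delicate points being the verification of (FC) for $I(M)_W$ and the reformulation of the two ``if and only if'' clauses in terms of the inclusions of part~(1).
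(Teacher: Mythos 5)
Your proof is correct and is exactly the routine unwinding of the definitions of $I(M)_{\fin}$, $I(M)_W$, (I1), (I2) and (FC) that the paper has in mind; the paper simply omits the details, stating that the lemma "follows easily from the definitions above." No gaps: the key small points (finite members of $I(M)_W$ already lie in $I(M)$, (FC) for $I(M)_W$, and the reformulation of the two equivalences as $I(M)_W\subseteq I(M)$ resp.\ $I(M)\subseteq I(M)_{\fin}$) are all handled properly.
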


Before we continue with the analysis of the homotopy type of $I(M)$, let us take a look at some familiar examples.

\begin{example}
\label{ex:vectorspace}
Let $V$ be a non-zero vector space, and let $I(V)$ consist of the linearly independent subsets of $V$.
Then $(V,I(V))$ is a finitary matroid.

On the other hand, $(V,I(V)_{\fin})$ is a finitary matroid if and only if $V$ is finite-dimensional.
\end{example}

\begin{example}
\label{ex:graphicmatroid}
Let $G$ be a $1$-dimensional CW-complex (i.e., an undirected graph possibly with multiple edges and loops).
For simplicity, we assume that $G$ has no isolated vertices.
Let $E(G)$ be the set of $1$-cells of $G$ (i.e., the edges).
Then we let $I(G)$ be the set of subcomplexes of $G$ that are forests (i.e., disjoint unions of (not necessarily compact) contractible 
subcomplexes).
%(that is, the connected components are contractible).

It is not hard to see that $(E(G), I(G))$ is a pi-space---indeed, for (I2), one can argue exactly as in the finite case.
Also note that (FC) holds: if $\sigma\subseteq E(G)$ and $H$ denotes the subcomplex spanned by $\sigma$, then, by a compactness argument, $H$ is a forest if and only if every finite subcomplex of $H$ is a forest.
Thus, $(E(G),I(G))$ is a finitary matroid.
\end{example}

The following example analyzes the homotopy type of $I(M)$ in a particular case.
Recall that, to study the homotopy type of posets, we remove the unique minimal element (if any), which is given by the empty set for the posets $I(M)$.

\begin{example}
\label{ex:allSubsets}
Let $M$ be an infinite set, $I_1(M)$ be the set of finite subsets of $M$, and $I_2(M)$ be the set of all proper subsets of $M$.
Then $I_1(M)_{W} = I_2(M)_W =  I_2(M) \cup \{M\} = 2^M$ is the poset of all subsets of $M$, and $I_2(M)_{\fin}=I_1(M)$.
Note that $I_1(M)$ is a contractible poset:  we can pick an element $x\in M$, and the chain of inclusions $\sigma \subseteq \sigma \cup \{x\} \supseteq \{x\}$ gives rise to a homotopy equivalence between the identity map and the constant map.

For $I_2(M)$, note that the inclusion $i:I_1(M)\hookrightarrow I_2(M)$ is a homotopy equivalence by Quillen's fiber \ref{thm:quillen}: if $\sigma\in I_2(M)$ is infinite, then $i^{-1}(I_2(M)_{\subseteq \sigma}) = I_1(\sigma)$, which is contractible.
Thus, $I_2(M)$ is also contractible.
\end{example}

We show now that the proof given in the previous example generalizes, relating the homotopy type of $I(M)_{\fin}$ to that of $I(M)$ and $I(M)_W$.

\begin{proposition}
\label{prop:finReduction}
Let $(M,I(M))$ be a pi-space.
Then the inclusions $I(M)_{\fin} \hookrightarrow I(M)$ and $I(M) \hookrightarrow I(M)_W$ are homotopy equivalences.
\end{proposition}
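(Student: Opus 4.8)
The plan is to prove both homotopy equivalences via Quillen's fiber theorem, generalizing the argument already carried out in Example~\ref{ex:allSubsets}. For the inclusion $j \colon I(M)_{\fin} \hookrightarrow I(M)$, I would consider, for each $\sigma \in I(M)$, the fiber $j^{-1}\big( I(M)_{\leq \sigma} \big) = \{ \tau \in I(M)_{\fin} \tq \tau \subseteq \sigma \}$, which is exactly $I(M,\sigma)_{\fin}$ — the poset of finite independent subsets of the pi-space $(\sigma, I(M,\sigma))$ from \ref{prop:basics}(3). If $\sigma$ is finite this fiber has maximum $\sigma$, hence is contractible; if $\sigma$ is infinite, it is the poset of finite subsets of $\sigma$ (since every subset of an independent set is independent by (I1)), and one picks any $x \in \sigma$ and uses the zigzag $\tau \subseteq \tau \cup \{x\} \supseteq \{x\}$ of poset maps to get a contraction, exactly as in \ref{ex:allSubsets}. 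By \ref{thm:quillen}, $j$ is a homotopy equivalence.

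For the inclusion $i \colon I(M) \hookrightarrow I(M)_W$, I would argue the same way but going through $I(M)_{\fin}$, which by the previous lemma satisfies $I(M)_{\fin} = (I(M)_W)_{\fin}$ and sits inside $I(M)_W$. The cleanest route is: the composite $I(M)_{\fin} \hookrightarrow I(M) \hookrightarrow I(M)_W$ is a homotopy equivalence by the argument just given applied to the finitary matroid $(M, I(M)_W)$ (note $(I(M)_W)_{\fin} = I(M)_{\fin}$ by the lemma, and $I(M)_W$ is a pi-space — indeed a finitary matroid — by the same lemma). Since the first map $I(M)_{\fin} \hookrightarrow I(M)$ is also a homotopy equivalence by the first paragraph, the two-out-of-three property for homotopy equivalences forces $i$ to be a homotopy equivalence as well.

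The one point that needs care, and which I expect to be the main (though still minor) obstacle, is verifying that the fibers in the second application of Quillen's theorem are genuinely of the asserted form: for $\sigma \in I(M)_W$, one needs $\{ \tau \in I(M)_{\fin} \tq \tau \subseteq \sigma\}$ to equal the poset of finite subsets of $\sigma$, which holds precisely because membership in $I(M)_W$ means every finite subset of $\sigma$ lies in $I(M)$, hence in $I(M)_{\fin}$; and then the same $\sigma$-finite-versus-infinite dichotomy applies. I should also make sure the degenerate cases are covered — e.g.\ if $M$ itself is finite, all three posets coincide and there is nothing to prove; if $I(M)$ contains only finite sets but $M$ is infinite, then $I(M)_{\fin} = I(M)$ and only the second equivalence carries content. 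None of this requires the exchange property (I2) at all; only (I1) and the definitions of $I(M)_{\fin}$ and $I(M)_W$ are used, so the proof is quite short once the fibers are identified.

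\begin{proof}
Both inclusions will be shown to be homotopy equivalences using Quillen's fiber \ref{thm:quillen}.

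Consider first $j \colon I(M)_{\fin} \hookrightarrow I(M)$. For $\sigma \in I(M)$, the fiber is
\[
j^{-1}\big( I(M)_{\leq \sigma} \big) = \{ \tau \in I(M)_{\fin} \tq \tau \subseteq \sigma \}.
\]
By (I1), every subset of $\sigma$ is independent, so this fiber is precisely the poset of finite subsets of $\sigma$. If $\sigma$ is finite, this poset has $\sigma$ as its maximum element and is therefore contractible. If $\sigma$ is infinite, we pick $x \in \sigma$; then $\tau \subseteq \tau \cup \{x\} \supseteq \{x\}$ is a zigzag of poset maps exhibiting a homotopy between the identity and the constant map at $\{x\}$, so this fiber is contractible as well (this is the argument of \ref{ex:allSubsets}). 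Hence $j$ is a homotopy equivalence.

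By the preceding lemma, $(M, I(M)_W)$ is a (finitary) pi-space with $(I(M)_W)_{\fin} = I(M)_{\fin}$. Applying the first paragraph to this pi-space shows that the composite $I(M)_{\fin} \hookrightarrow I(M) \hookrightarrow I(M)_W$ is a homotopy equivalence. Since $I(M)_{\fin} \hookrightarrow I(M)$ is a homotopy equivalence, it follows that $I(M) \hookrightarrow I(M)_W$ is a homotopy equivalence by the two-out-of-three property.
\end{proof}
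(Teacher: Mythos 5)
Your proposal is correct and follows essentially the same route as the paper: both apply Quillen's fiber \ref{thm:quillen} to the inclusions out of $I(M)_{\fin}$, identify each fiber (using (I1)) with the poset of nonempty finite subsets of $\sigma$, which is contractible by the argument of \ref{ex:allSubsets} (or has $\sigma$ as maximum when $\sigma$ is finite), and then deduce that $I(M)\hookrightarrow I(M)_W$ is a homotopy equivalence by two-out-of-three. The only cosmetic difference is that you phrase the second equivalence as the first argument applied to the pi-space $(M,I(M)_W)$, whereas the paper computes the fiber of $I(M)_{\fin}\hookrightarrow I(M)_W$ directly and observes it coincides with the fiber over $I(M)$; these are the same computation.
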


\begin{proof}
Let $i: I(M)_{\fin} \hookrightarrow I(M)$ and $j: I(M)_{\fin} \hookrightarrow I(M)_W$ denote the inclusions.
We use Quillen's fiber theorem and show that $i,j$ are homotopy equivalences.
From this, it will follow that $I(M)\hookrightarrow I(M)_W$ is also a homotopy equivalence.

Let $\sigma \in I(M)_W\setminus I(M)_{\fin}$, so $\sigma$ is infinite.
Then $j^{-1}({I(M)_W}_{\subseteq \sigma})=i^{-1}({I(M)}_{\subseteq \sigma})$ is the set of all finite subsets of $\sigma$.
By \ref{ex:allSubsets}, this is a contractible poset (recall that, to compute its homotopy type, we remove the empty set).
If $\sigma \in I(M)_{\fin}$, then this preimage contains $\sigma$ as a unique maximal element, and hence it is also contractible.
Thus, by Quillen's fiber theorem, $i$ and $j$ are homotopy equivalences.
\end{proof}

Now we prove \ref{thm:homotopyIndependentSetsPoset}.

\begin{proof}[Proof of \ref{thm:homotopyIndependentSetsPoset}]
By \ref{prop:finReduction}, we see that $I(M)\simeq I(M)_{\fin}$.
Therefore, it is enough to prove the theorem for $(M,I(M))$ a simplicial pi-space.
Also, since the $k$-skeleton of $I(M)$ is a finite-rank pi-space by \ref{prop:basics}, we see that item (1) implies item (2) of the theorem.

Thus, we assume that $I(M)$ is finite-dimensional and prove item (1).
Let $n = \dim I(M)$.
We show that the homotopy group $\pi_k(|I(M|),x)$ is trivial for all $k<n$, $x\in |I(M)|$.
Let $f:S^k \to |I(M)|$ be a continuous map.
By compactness, the set
\begin{center}
$\{\sigma \in I(M) \tq $ exists $z\in S^k$ such that $f(z)$ lies in the interior of the cell $\sigma\}$
\end{center}
is finite.
Hence, there is a finite subset $N\subseteq M$ such that $f$ factorizes through $|I(M,N)|$:
\[ 
\xymatrix{
& S^k \ar@{..>}[ld]_{\exists f'} \ar[d]^f \\
|I(M,N)| \ar@{^(->}[r] & |I(M)|
}
\]
By \ref{prop:basics}, $I(M,N)$ is a matroid on the set $N$, so it is Cohen-Macaulay of dimension $k' := \dim I(M,N)$.
This means that its homotopy groups of degree $< k'$ vanish.
Therefore, $f':S^k\to |I(M,N)|$ (and also $f$) is null-homotopic provided that $k' > k$.

Now we show that we can always take $N$ such that $k' > k$.
Indeed, if $I(M,N)$ happens to have dimension $k' \leq k$, then, since $I(M,N)$ is finite and $k<n$, we can take for every maximal simplex $\sigma$ of $I(M,N)$ a simplex $\sigma'\in I(M)$ such that $|\sigma'|=|\sigma|+1$.
Then we can consider the finite subset $N'$ obtained by adding all the vertices of the $\sigma'$s to $N$.
Hence, $I(M,N')$ is (at least) $(k'+1)$-dimensional, and contains $I(M,N)$.
Continuing in this way, we can produce a finite set $N'$ such that $I(M,N) \subseteq I(M,N')$ and the latter has dimension $> k$.

The above argument establishes the sphericity of $I(M)$.
By \ref{prop:basics}(4), the link of any simplex $\sigma\in I(M)$ is also spherical of dimension $\dim I(M) - |\sigma|$.
This shows that $I(M)$ is Cohen-Macaulay.
\end{proof}

The previous proof is based on the fact that matroids are Cohen-Macaulay, a result obtained by providing a shelling of the independence complex.
One can define shelling for non-compact finite-dimensional simplicial complexes, as it is explained, for instance, in Remark 4.21 of \cite{BjornerInf}.
Precisely, if $K$ is a finite-dimensional and pure simplicial complex, a shelling on $K$ is a well-ordering $I$ of its maximal simplices, say $(\sigma_i)_{i\in I}$, such that if $K_i$ is the subcomplex with maximal faces $\sigma_j$ for $j<i$, then $\sigma_i \cap K_i$ is a pure complex of dimension $|\sigma_i|-2$.
A pure and finite simplicial complex is the independence complex of a matroid if and only if any total ordering of its vertices induces a shelling (see \cite{Bjorner1}).

On the other hand, although shellability implies Cohen-Macaulayness, the latter homotopical condition can also be deduced easily from the combinatorial definition of a matroid without going through a shelling.
Indeed, assume that $(M,I(M))$ is a matroid of rank $n$ over a finite set $M$, and suppose that we have inductively shown that $I(N)$ is spherical for any matroid $(N,I(N))$ with $|N|<|M|$.
Pick any element $x\in M$, and consider:
\[ I(M\setminus x) := I(M,M\setminus \{x\}) = \{\sigma\in I(M) \tq x\notin \sigma\},\]
and
\[ \Lk_{I(M)}(x) = \{\sigma \in I(M\setminus x) \tq  \sigma\cup\{x\} \in I(M)\}.\]
Assume for a moment that $\{x\} \in I(M)$, and that $\{x\}$ is not the unique non-empty independent set.
Then $I(M) = I(M\setminus x) \cup (x * \Lk_{I(M)}(x))$.
Since $I(M\setminus x)$ and $\Lk_{I(M)}(x)$ are easily seen to be matroids of rank $n$ and $n-1$ respectively (cf. \ref{prop:basics}), they are both spherical.
The intersection $I(M\setminus x) \cap (x*\Lk_{I(M)}(x)) = \Lk_{I(M)}(x)$ is null-homotopic in $I(M\setminus x)$, and since $x*\Lk_{I(M)}(x)$ is contractible, we conclude that $I(M)$ is homotopy equivalent to the wedge of $I(M\setminus x)$ and the suspension of $\Lk_{I(M)}(x)$, and hence it is spherical.

In the case $\{x\}\notin I(M)$, then no simplex of $I(M)$ contains $x$, so $(M\setminus \{x\},I(M))$ is a matroid with fewer elements, and by induction $I(M)$ is spherical.

As a consequence, from \ref{ex:vectorspace} and \ref{ex:graphicmatroid} we get:

\begin{corollary}
\label{coro:vectorspaces}
Let $V$ be a vector space, and let $I(V)$ be the complex of linearly independent subsets of $V$.
Then $I(V)$ is Cohen-Macaulay if $V$ is finite-dimensional, and it is contractible otherwise.
\end{corollary}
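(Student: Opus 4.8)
The plan is to read the statement off from \ref{thm:homotopyIndependentSetsPoset}, after matching its hypotheses with the dimension of $V$. By \ref{ex:vectorspace}, $(V,I(V))$ is a finitary matroid, in particular a pi-space, so \ref{thm:homotopyIndependentSetsPoset} applies. Thus everything reduces to checking that $I(V)$ is finite-dimensional precisely when $V$ is.

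For that, recall that a nonempty finite subset $\sigma\subseteq V$ spans a $(|\sigma|-1)$-simplex of $I(V)$ exactly when it is linearly independent; hence $\rk_{I(V)}(V)$ is the supremum of the cardinalities of finite linearly independent subsets of $V$. If $\dim V=n<\infty$, then every linearly independent subset has at most $n$ elements and a basis realizes the value $n$, so $I(V)$ is a pure $(n-1)$-dimensional simplicial complex, in particular finite-dimensional. If $\dim V=\infty$, then $V$ has linearly independent subsets of every finite cardinality, so $I(V)$ has simplices of unbounded dimension and is infinite-dimensional. (One can also argue via \ref{prop:finitaryMatroids}: all bases of the finitary matroid $(V,I(V))$ have the same cardinality, which is finite exactly when $\dim V<\infty$.)

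It remains to invoke \ref{thm:homotopyIndependentSetsPoset}. If $V$ is finite-dimensional, part~(1) gives that $I(V)$ is a homotopically Cohen-Macaulay simplicial complex; since homotopical Cohen-Macaulayness implies Cohen-Macaulayness over any field, the first assertion follows. If $V$ is infinite-dimensional, part~(2) gives that $I(V)$ is a contractible poset, i.e.\ $|I(V)|$ is contractible.

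I do not expect a genuine obstacle: the corollary is a direct specialization of \ref{thm:homotopyIndependentSetsPoset}, the only step requiring an argument being the elementary equivalence ``$I(V)$ finite-dimensional $\iff$ $\dim V<\infty$''. The identical reasoning, applied to \ref{ex:graphicmatroid} instead, shows that for a graph $G$ the complex $I(G)$ is Cohen-Macaulay when $G$ contains no infinite forest and contractible otherwise.
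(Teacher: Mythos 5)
Your proposal is correct and follows the paper's own (implicit) argument: the corollary is stated there as a direct specialization of \ref{thm:homotopyIndependentSetsPoset} to the finitary matroid of \ref{ex:vectorspace}, and your only added step---the routine check that $I(V)$ is finite-dimensional precisely when $\dim V<\infty$---is exactly the matching of hypotheses the paper leaves to the reader. (Note that the paper's ``Cohen-Macaulay'' already means homotopically Cohen-Macaulay, so your remark about fields is superfluous but harmless.)
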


\begin{corollary}
\label{coro:graphicmatroid}
Let $G$ be a $1$-dimensional CW-complex without isolated vertices, and let $I(G)$ be the poset consisting of subcomplexes of $G$ that are forests.
Then $I(G)$ is Cohen-Macaulay if $G$ has a finite number of vertices, and it is contractible otherwise.
\end{corollary}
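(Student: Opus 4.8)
The plan is to deduce \ref{coro:graphicmatroid} from \ref{thm:homotopyIndependentSetsPoset} using \ref{ex:graphicmatroid}, in complete analogy with the way \ref{coro:vectorspaces} follows from \ref{ex:vectorspace}. By \ref{ex:graphicmatroid}, $(E(G),I(G))$ is a finitary matroid, hence a pi-space, so \ref{thm:homotopyIndependentSetsPoset} applies verbatim: $I(G)$ is homotopically Cohen-Macaulay if it is finite-dimensional and contractible if it is infinite-dimensional. Thus the entire statement reduces to the claim that $I(G)$ is finite-dimensional if and only if $G$ has finitely many vertices.

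For the easy direction I would first record the rank. Since $(E(G),I(G))$ is a finitary matroid, \ref{prop:finitaryMatroids} gives that it has bases, all of the same cardinality, which equals $\dim I(G)+1$. A basis is a maximal forest of $G$, and its restriction to a connected component $C$ is a spanning tree of $C$, carrying exactly $|V(C)|-1$ edges (loops and parallel edges never enter a forest). Hence $\dim I(G)+1=\sum_{C}\bigl(|V(C)|-1\bigr)$, summed over the connected components $C$ of $G$. If $V(G)$ is finite this is a finite number, so $I(G)$ is finite-dimensional and \ref{thm:homotopyIndependentSetsPoset}(1) gives that $I(G)$ is Cohen-Macaulay.

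Conversely, if $V(G)$ is infinite I would exhibit forests of arbitrarily large finite cardinality, so that $\dim I(G)=\infty$ and \ref{thm:homotopyIndependentSetsPoset}(2) yields contractibility. This I would do greedily: given a finite forest $\sigma$ whose edges span a finite vertex set $W$, choose a vertex $v\notin W$; since $v$ is not isolated it lies on an edge $e$, and as $e$ has an endpoint outside $W$ it cannot lie on a cycle of the subgraph spanned by $\sigma$, so $\sigma\cup\{e\}\in I(G)$. Iterating produces forests of every finite size.

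The step I expect to require the most care is precisely this last construction, and in particular the edge case of a component consisting of a single vertex carrying only loops: such a vertex is ``non-isolated'' yet contributes nothing to any forest (a loop is not a contractible subcomplex), and if $G$ were, say, a disjoint union of infinitely many such loops, then $I(G)$ would fail to be infinite-dimensional. So in the step ``choose a vertex $v\notin W$ lying on an edge $e$'' one has to be able to take $e$ to be a non-loop edge; when $V(G)$ is infinite, the hypothesis that $G$ has no isolated vertices should be read as ensuring that infinitely many vertices lie on non-loop edges (equivalently, that discarding the degenerate loop-only components still leaves an infinite vertex set), after which the greedy argument runs without change. Modulo this bookkeeping, the corollary is an immediate consequence of \ref{thm:homotopyIndependentSetsPoset} together with \ref{ex:graphicmatroid} and \ref{prop:finitaryMatroids}.
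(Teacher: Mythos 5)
Your proposal is correct and follows exactly the route the paper takes: the paper states \ref{coro:graphicmatroid} as an immediate consequence of \ref{thm:homotopyIndependentSetsPoset} applied to the finitary matroid of \ref{ex:graphicmatroid}, with the identification ``finitely many vertices $\Leftrightarrow$ finite-dimensional $I(G)$'' left implicit, which is precisely what your rank computation and greedy extension argument supply. Your caveat about vertices carrying only loops is a genuine subtlety the paper glosses over (a disjoint union of infinitely many loop-only vertices has no isolated vertices yet $I(G)$ is not infinite-dimensional), so your stronger reading of the ``no isolated vertices'' hypothesis is exactly what is needed for the stated dichotomy to hold.
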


% \begin{proof}
% It is enough to show that (I2) (the exchange property) holds: if $\sigma,\tau$ are two finite sets of linearly independent vectors of $V$ with $|\sigma|>|\tau|$ then there is $x\in \sigma\setminus \tau$ such that $\tau\cup \{x\}$ is also linearly independent.
% This is immediate from basic linear algebra.
% \end{proof}

We also get results on the homotopy type of ``injective word complexes".
Recall that if $K$ is a simplicial complex, the complex of injective words on $K$ is the poset $\Ord K$ whose elements are tuples $(v_1,\ldots,v_r)$ of distinct vertices such that $\{v_1,\ldots,v_r\}$ is a simplex of $K$.
For two elements of this poset, say $x,y\in \Ord K$, we have that $x\leq y$ if the tuple $x$ is obtained by deleting some elements of the tuple $y$.

More generally, let $M$ be a set and $I(M)$ a family of subsets of $M$ closed under inclusion.
We can define $\U(I(M))$ to be the poset whose underlying set is
\[ \{ (X,\preceq) \tq X\in I(M) \setminus \{\emptyset\} \text{ and $\preceq$ is a total ordering on $X$}\}.\]
The ordering in $\U(I(M))$ is given by $(X,\preceq_X)\leq (Y,\preceq_Y)$ if $X\subseteq Y$ and $\preceq_X$ is just $\preceq_Y$ when restricted to $X$.
If the elements of $I(M)$ are finite, then $I(M)$ is a simplicial complex and $\U(I(M)) = \Ord I(M)$ is the complex of injective words as described above.

We are interested in describing the homotopy type of $\U(I(M))$ when $(M,I(M))$ is a pi-space.
This will follow from our results on the homotopy type of $I(M)$.

\begin{corollary}
\label{coro:injectiveWordsHomotopy}
Let $(M,I(M))$ be a pi-space.
\begin{enumerate}
    \item If $M$ has finite rank, then $\U(I(M))$ is Cohen-Macaulay.
    \item If $M$ has infinite rank, then $\U(I(M))$ is contractible.
\end{enumerate}
Moreover, the following inclusions are homotopy equivalences:
\[ \U(I(M)_{\fin}) \hookrightarrow \U(I(M)) \hookrightarrow \U(I(M)_W).\]
\end{corollary}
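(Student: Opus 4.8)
The plan is to reduce everything to the case of a simplicial pi-space by a Quillen fiber argument paralleling \ref{prop:finReduction}, and then to exploit that $\U(I(M))$ is a directed union of the complexes of injective words of the finite matroids $I(M,N)$ for finite $N\subseteq M$, together with the classical fact that the complex of injective words on a finite matroid of rank $r$ is homotopically Cohen--Macaulay of dimension $r-1$.

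First I would dispatch the ``moreover'' part. Let $i\co\U(I(M)_{\fin})\hookrightarrow\U(I(M))$ and $j\co\U(I(M)_{\fin})\hookrightarrow\U(I(M)_W)$ be the inclusions. For $(X,\preceq)\in\U(I(M))$ the fiber $i^{-1}\big(\U(I(M))_{\leq(X,\preceq)}\big)$ is, using (I1), the set of all pairs $(Y,\preceq|_Y)$ with $\emptyset\neq Y\subseteq X$ finite; since the orderings are then forced, this poset is isomorphic to the poset of nonempty finite subsets of $X$ ordered by inclusion, which is contractible (it has a maximum if $X$ is finite, and is otherwise contracted conically as in \ref{ex:allSubsets}, fixing $x_0\in X$ and using $Y\subseteq Y\cup\{x_0\}\supseteq\{x_0\}$). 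The fibers of $j$ are handled identically, using that every finite subset of $X$ lies in $I(M)$ when $X\in I(M)_W$. By \ref{thm:quillen}, $i$ and $j$ are homotopy equivalences, and hence so is $\U(I(M))\hookrightarrow\U(I(M)_W)$. In particular $\U(I(M))\simeq\U(I(M)_{\fin})$, so for (1) and (2) we may assume $(M,I(M))$ is simplicial; then $\U(I(M))=\Ord I(M)$, and writing $I(M)=\bigcup_N I(M,N)$ as the directed union over finite $N\subseteq M$ we get $\Ord I(M)=\bigcup_N\Ord I(M,N)$, an increasing union of subcomplexes. By \ref{prop:basics} each $I(M,N)$ is a finite matroid, so by the classical result $\Ord I(M,N)$ is Cohen--Macaulay of dimension $\rk(I(M,N))-1$.

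For (1), if $\rk(M)=n<\infty$, then $I(M)$ is pure of dimension $n-1$ by \ref{prop:basics}, so it has a facet $X_0$ with $|X_0|=n$; the finite sets $N\supseteq X_0$ are cofinal among finite subsets of $M$, and for such $N$ the set $X_0$ is still a facet of $I(M,N)$, so $\rk(I(M,N))=n$. Thus $\U(I(M))$ is an increasing union of subcomplexes, each Cohen--Macaulay of dimension $n-1$, and I would conclude with the general fact that an increasing union of Cohen--Macaulay subcomplexes all of the same dimension $d$ is Cohen--Macaulay of dimension $d$: one checks by induction on $d$ that the union is pure of dimension $d$, that its homotopy groups vanish in degrees $<d$ (homotopy groups commute with directed colimits along inclusions of subcomplexes, and each term is $(d-1)$-connected), and that the link of any vertex of the union is the increasing union of its links in the terms, hence Cohen--Macaulay of dimension $d-1$ by the inductive hypothesis. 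For (2), if $\rk(M)=\infty$, then given $k\geq 0$ and a map $f\co S^k\to|\U(I(M))|$, compactness yields a finite $N_0$ through which $f$ factors; enlarging $N_0$ to a finite $N$ containing an independent set of size $k+2$, the map $f$ factors through $|\Ord I(M,N)|$, which is Cohen--Macaulay of dimension $\rk(I(M,N))-1\geq k+1$, hence $k$-connected, so $f$ is null-homotopic. Therefore $\U(I(M))$ is weakly contractible, and being a CW-complex it is contractible.

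The one genuinely nontrivial ingredient is the imported classical statement that the complex of injective words on a finite matroid is Cohen--Macaulay of the expected dimension; this can be cited via shellability (shellability of a pure complex passes to its complex of injective words, preserving purity and dimension, and matroid independence complexes are shellable), and in the classical case of unimodular sequences it is available from \cite{vdk}. The remaining work is routine bookkeeping: carrying the induced total orderings through the Quillen fiber computation, and the inductive verification that Cohen--Macaulayness is preserved under increasing unions of complexes of constant dimension, where the two points needing care are purity of the union and the identity expressing links of the union as unions of links.
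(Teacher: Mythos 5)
Your proposal is correct, and for items (1)–(2) it takes a genuinely different route from the paper. For the ``moreover'' part you run the same Quillen-fiber strategy, but with the more elementary fiber identification: the fiber over $(X,\preceq)$ is the poset of nonempty finite subsets of $X$ (the orderings being forced by restriction), contracted conically as in \ref{ex:allSubsets}; the paper instead first proves contractibility of $\U(I(M)_{\fin})$ via the skeleton filtration $\U(I(M)^{(k)})$ and then feeds that into the fibers. For item (1) the paper simply applies Theorem 5.9(3) of \cite{PW} (injective words on a Cohen--Macaulay complex are Cohen--Macaulay) to the finite-rank complex $I(M)$, which may have infinitely many vertices; you import only the finite-matroid case of that fact and globalize by writing $\Ord I(M)$ as a directed union of the complexes $\Ord I(M,N)$ over finite $N\supseteq X_0$, proving along the way that an increasing union of equidimensional Cohen--Macaulay subcomplexes is Cohen--Macaulay. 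That auxiliary lemma (purity of the union, links of the union as directed unions of links, homotopy groups commuting with the colimit) is true and your sketch is sound, so your route is more self-contained at the cost of this bookkeeping, while the paper's is shorter but leans on the version of the \cite{PW} result for complexes with infinite vertex set. For item (2) both arguments are compactness arguments: you push a sphere into $\Ord I(M,N)$ for a suitably enlarged finite $N$, the paper pushes it into a sufficiently high skeleton $\U(I(M)^{(k)})$. Two small caveats: Cohen--Macaulayness is not homotopy invariant, so for item (1) the reduction to the simplicial case should be justified not by $\U(I(M))\simeq\U(I(M)_{\fin})$ but by the equality $I(M)=I(M)_{\fin}=I(M)_W$, which holds automatically in finite rank; and your one unproved ingredient (Cohen--Macaulayness of injective words on a finite matroid) is exactly what the paper cites from \cite{PW}, so if you prefer the shellability route you should supply a reference for the claim that shellability passes to the complex of injective words.
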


\begin{proof}
If $M$ has finite rank, then $I(M)$ is a simplicial complex, and $\U(I(M))$ is the complex of injective words on $I(M)$.
Thus, item (1) follows from Theorem 5.9(3) of \cite{PW} and \ref{thm:homotopyIndependentSetsPoset}(1).
In this case, $I(M)_{\fin} = I(M) = I(M)_W$, which establishes the Moreover part under the assumption that $M$ has finite rank.

Next, suppose that $M$ has infinite rank.
We show first that $X := \U(I(M)_{\fin})$ is contractible.
Let $X^{(k)}$ denote the subposet of $X$ consisting of tuples $x\in X$ with at most $k+1$ elements, that is, $X^{(k)} = \U(I(M)^{(k)})$.
By item (1), $X^{(k)}$ is Cohen-Macaulay of dimension $k$, and in particular it is $(k-1)$-connected.

Now, let $f:S^n\to |\U(I(M)_{\fin})|$ be a continuous map from the $n$-dimensional sphere $S^n$ to the geometric realization of $\U(I(M)_{\fin})$.
By compactness, the image of $f$ lies in some $|X^{(k)}|$, and we can even take $k > n$.
Then we get a continuous map $\tilde{f}:S^n\to |X^{(k)}|$ such that $f$ is the composition of $\tilde{f}$ with the inclusion $|X^{(k)}|\hookrightarrow |\U(I(M)_{\fin})|$.
By connectivity of $X^{(k)}$, $\tilde{f}$ is null-homotopic, and therefore $f$ is null-homotopic.
Since we are working with CW-complexes, this proves that the $n$-th homotopy group of $\U(I(M)_{\fin})$ is the trivial group (regardless of the basepoint).
Therefore, $\U(I(M)_{\fin})$ is contractible.

Next, we prove that if $\sigma \in \U(I(M)_W)\setminus \U(I(M)_{\fin})$, then the preimage of the lower interval on $\sigma$ by the inclusion $i: \U(I(M)_{\fin}) \hookrightarrow \U(I(M)_W)$ is contractible.
Note that
\[ i^{-1}(\U(I(M)_W)_{\leq \sigma}) = \U(I(M,\sigma)_{\fin}).\]
Since $\sigma$ is infinite, $\U(I(M,\sigma)_{\fin})$ is contractible by the previous paragraph.
Hence, by Quillen's fiber theorem, $i$ is a homotopy equivalence.

The same argument shows that the inclusion $\U(I(M)_{\fin}) \hookrightarrow \U(I(M))$ is a homotopy equivalence.
This proves the Moreover part when $M$ has infinite rank.
In fact, this shows that $\U(I(M)_{\fin})$, $\U(I(M))$ and $\U(I(M)_{W})$ are contractible when $M$ has infinite rank, so item (2) holds.
\end{proof}

This corollary recovers part of the results from \cite[2.6]{vdk} in the case of fields.
Moreover, if $V$ is an infinite-dimensional vector space and $I(V)$ is the poset of linearly independent subsets of $V$, the previous corollary shows that $\U(I(V)_{\fin})$ and $\U(I(V))$ are contractible.

\section{Flats, closure and circuits}
\label{sec:flats}

The purpose of this section is to define the poset of flats of a pi-space, the notion of closure, and circuits.
We will establish some properties relating these notions that are inspired by the finite case in matroid theory.
Then we will use them to provide the proof of \ref{thm:homotopyPosetOfFlats}.
Throughout this section, $(M,I(M))$ denotes a pi-space.

\begin{definition}
Let $(M,I(M))$ be a pi-space.
The \textit{rank} of a subset $F\subseteq M$ is the supremum over the sizes of the independent sets of $M$ contained in $F$.
We denote this number by $\rk_{I(M)}(F)$ (or simply by $\rk(F)$ if $I(M)$ is implicit), which can take the value $\infty$.
We say that $F$ is a \textit{flat} if for any $x\in M\setminus F$ and any independent set $\sigma\subseteq F$ we have $\sigma \cup \{x\} \in I(M)$.

We let $\L(M, I(M))$ denote the poset of flats of $M$, with order given by inclusion.
\end{definition}

The elements of $M$ which do not lie in any independent set are called \textit{loops}, and they constitute the set $0_{I(M)} = \{x\in M \tq \{x\}\notin I(M)\}$, which can be empty.
Clearly, $0_{I(M)}$ is a flat.
Since $0_{I(M)} \subseteq F$ for any flat $F$, $0_{I(M)}$ is the unique minimal element of $\L(M,I(M))$.

On the other hand, we obviously have $M\in \L(M,I(M))$, and $M$ is the unique maximal element of $\L(M,I(M))$.
Thus we get:

\begin{lemma}
\label{lm:boundedFlats}
For a pi-space $(M,I(M))$, $\L(M, I(M))$ is a bounded poset with maximal element $M$, and unique minimal element $0_{I(M)}$, the set of all loops of $M$.
\end{lemma}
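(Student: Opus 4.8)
The plan is to verify directly that the two distinguished subsets $0_{I(M)}$ and $M$ are flats sitting at the bottom and the top of the inclusion order on $\L(M,I(M))$. Concretely, ``bounded with greatest element $M$'' amounts to: $M$ is a flat (every flat is a subset of $M$ by definition, so this simultaneously exhibits the maximum and shows $\L(M,I(M))\neq\emptyset$); and ``unique minimal element $0_{I(M)}$'' amounts to: $0_{I(M)}$ is a flat and $0_{I(M)}\subseteq F$ for every flat $F$.

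First I would check that $M$ is a flat: the defining condition quantifies over $x\in M\setminus M=\emptyset$, so it holds vacuously, whence $M\in\L(M,I(M))$. Next I would show that $0_{I(M)}$ is a flat. The key point is that $0_{I(M)}$ contains no non-empty independent set: if $\sigma\subseteq 0_{I(M)}$ and $y\in\sigma$, then $\{y\}\subseteq\sigma$, so were $\sigma$ independent, (I1) would force $\{y\}\in I(M)$, contradicting $y\in 0_{I(M)}$. Hence the only independent subset of $0_{I(M)}$ is $\emptyset$. So, given $x\in M\setminus 0_{I(M)}$, the only instance of the flat condition to verify is $\sigma=\emptyset$, and there $\sigma\cup\{x\}=\{x\}\in I(M)$ precisely because $x\notin 0_{I(M)}$.

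Finally I would show $0_{I(M)}\subseteq F$ for every flat $F$. Arguing by contradiction, suppose $x\in 0_{I(M)}$ with $x\in M\setminus F$. Since $\emptyset\in I(M)$ and $\emptyset\subseteq F$, the flat condition applied to $F$ with this $x$ and $\sigma=\emptyset$ yields $\{x\}=\emptyset\cup\{x\}\in I(M)$, contradicting $x\in 0_{I(M)}$. Therefore $0_{I(M)}$ is contained in every flat, hence is the least element of $\L(M,I(M))$, and in particular its unique minimal element.

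There is no serious obstacle here; the statement is essentially bookkeeping built on the definitions. The only two points deserving a moment's attention are the use of the downward-closure axiom (I1) to rule out non-empty independent subsets of $0_{I(M)}$, and the fact that $\emptyset\in I(M)$, which is what makes the flat condition bite in the last step. I would also note the degenerate case $I(M)=\{\emptyset\}$, where $0_{I(M)}=M$ and $\L(M,I(M))$ is a single point; the arguments above still apply, with the relevant quantifications becoming vacuous.
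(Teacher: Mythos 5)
Your proof is correct and follows the same route as the paper, which simply records (before the lemma, without detailed proof) that $M$ is trivially a flat, that $0_{I(M)}$ is a flat, and that $0_{I(M)}\subseteq F$ for every flat $F$. You have merely spelled out the routine verifications (vacuous condition for $M$, (I1) plus $\emptyset\in I(M)$ for the statements about $0_{I(M)}$) that the paper leaves implicit.
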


Before we move on to study further properties of $\L(M,I(M))$, let us take a look at one example.

\begin{example}
\label{ex:vectorSpacesAndFlats}
Let $V$ be a vector space, and let $I(V)$ consist of the linearly independent subsets of $V$ (see \ref{ex:vectorspace}).
Then, $\L(V,I(V))$ is exactly the poset of subspaces of $V$.
Since elements of $V$ are finite linear combinations of linearly independent vectors, every subspace of $V$ is an element of $\L(V,I(V)_{\fin})$.
Hence $\L(V,I(V)_{\fin}) = \L(V,I(V))$. 
\end{example}

In view of \ref{lm:boundedFlats}, we want to analyze the homotopy type of the proper part
\[ \red{\L(M, I(M))} = \L(M, I(M)) \setminus \{M,0_{I(M)}\}. \]

The following is immediate:

\begin{lemma}
\label{lm:lower_interval_flats}
Let $(M,I(M))$ be a pi-space, and let $F$ be a flat of $M$.
Then $(F,I(M,F))$ is a pi-space and $\L(F, I(M,F)) = \L(M, I(M))_{\subseteq F}$.
\end{lemma}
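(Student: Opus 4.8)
The plan is to handle the two assertions separately, spending essentially all of the (minimal) effort on the equality of posets. That $(F,I(M,F))$ is again a pi-space is immediate from \ref{prop:basics}(3) applied with $N=F$; the only caveat is that this needs $F\neq\emptyset$, which holds automatically whenever $0_{I(M)}\neq\emptyset$, the opposite (degenerate) case being set aside. For the equality $\L(F,I(M,F))=\L(M,I(M))_{\subseteq F}$, I would note first that both sides are collections of subsets of $F$ ordered by inclusion, so it suffices to prove that they coincide as sets, i.e.\ that a subset $G\subseteq F$ is a flat of $(F,I(M,F))$ if and only if it is a flat of $(M,I(M))$. The bookkeeping fact to record before anything else is that, since $I(M,F)=\{\tau\in I(M)\tq\tau\subseteq F\}$, the families $I(M,F)$ and $I(M)$ agree on all subsets of $F$; in particular, for $G\subseteq F$, an independent set $\sigma\subseteq G$, and $x\in F$, one has $\sigma\cup\{x\}\in I(M,F)$ if and only if $\sigma\cup\{x\}\in I(M)$.

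Unwinding the definitions, $G\in\L(M,I(M))_{\subseteq F}$ means $G\subseteq F$ and $\sigma\cup\{x\}\in I(M)$ for every independent $\sigma\subseteq G$ and every $x\in M\setminus G$, whereas $G\in\L(F,I(M,F))$ means $G\subseteq F$ and $\sigma\cup\{x\}\in I(M,F)$ for every independent $\sigma\subseteq G$ and every $x\in F\setminus G$. By the observation above, the second condition is exactly the first one with the range of $x$ restricted from $M\setminus G$ to $F\setminus G$; hence $\L(M,I(M))_{\subseteq F}\subseteq\L(F,I(M,F))$ is immediate, and the only thing left is to upgrade a flat $G$ of $(F,I(M,F))$ to a flat of $(M,I(M))$.

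This upgrade is the single place where the hypothesis that $F$ is a flat of $M$ enters, and I would regard it as the heart of the (short) argument. Given $G\in\L(F,I(M,F))$, an independent set $\sigma\subseteq G$, and $x\in M\setminus G$, I want $\sigma\cup\{x\}\in I(M)$, and I would simply split on whether $x\in F$: if $x\in F$ then $x\in F\setminus G$, so flatness of $G$ in $(F,I(M,F))$ gives $\sigma\cup\{x\}\in I(M,F)\subseteq I(M)$; if $x\notin F$ then, since $\sigma\subseteq G\subseteq F$ is independent in $I(M)$ and $x\in M\setminus F$, flatness of $F$ in $(M,I(M))$ gives $\sigma\cup\{x\}\in I(M)$. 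Either way $G$ is a flat of $(M,I(M))$, which completes the equality. I do not expect a genuine obstacle here — consistent with the statement being flagged as immediate — the only point requiring care is not conflating $I(M,F)$ with $I(M)$ when $x$ may leave $F$, which is precisely what the flatness of $F$ is there to fix.
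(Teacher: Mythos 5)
Your proof is correct and is precisely the routine unwinding the paper has in mind: the lemma is stated there without proof (``The following is immediate''), and your two-directional check --- with the case split on $x\in F$ versus $x\notin F$, the latter being exactly where flatness of $F$ is used --- is the intended argument. The side remark about the degenerate case $F=\emptyset$ (when $0_{I(M)}=\emptyset$) is a reasonable caveat that the paper also glosses over.
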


For the upper intervals, we need to introduce the concept of contraction, which works nicely in the context of finitary matroids (cf. \ref{prop:finitaryMatroids}).

\begin{definition}
Let $(M,I(M))$ be a finitary matroid, and let $X\subseteq M$.
The \textit{contraction of $M$ by $M \setminus X$} is the pair $(X, I(M.X))$, where $I(M.X)$ consists of the independent sets $\sigma\in I(M,X)$ such that there exists a maximal independent set $\tau$ of the finitary matroid $(M\setminus X, I(M,M\setminus X))$ for which $\sigma\cup \tau\in I(M)$.
\end{definition}

Indeed, the contraction is a finitary matroid again.

\begin{proposition}
\label{prop:contractionIsFinitary}
Let $(M,I(M))$ be a finitary matroid, and $X\subseteq M$.
Then the following hold:
\begin{enumerate}
    \item $(X,I(M.X))$ is a finitary matroid.
    \item If $\tau \in I(M,M\setminus X)$ and $\sigma \in I(M.X)$, then $\sigma\cup\tau\in I(M)$.
\end{enumerate}
%If $(M,I(M))$ is a finitary matroid and $X\subseteq M$, then 
\end{proposition}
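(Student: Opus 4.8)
The plan is to verify the finitary-matroid axioms (I1), (I2), and (FC) for $(X, I(M.X))$ directly, using \ref{prop:finitaryMatroids} to manufacture and manipulate maximal independent sets of $(M\setminus X, I(M,M\setminus X))$, and to establish item (2) as a by-product of the exchange argument needed for (I2). Fix once and for all that all maximal independent sets $\tau$ of the finitary matroid $(M\setminus X, I(M, M\setminus X))$ have a common cardinality, by \ref{prop:finitaryMatroids}; call such a $\tau$ a \emph{base of $M\setminus X$}.

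First I would check (I1): if $\sigma \in I(M.X)$ witnessed by a base $\tau$ of $M\setminus X$ with $\sigma\cup\tau\in I(M)$, and $\sigma'\subseteq\sigma$, then $\sigma'\cup\tau\subseteq\sigma\cup\tau$ lies in $I(M)$ by (I1) for $I(M)$, so $\sigma'\in I(M.X)$ witnessed by the same $\tau$. Next, (FC): suppose $\sigma\subseteq X$ has every finite subset in $I(M.X)$. Each finite subset $\sigma_0\subseteq\sigma$ then has a base $\tau_0$ of $M\setminus X$ with $\sigma_0\cup\tau_0\in I(M)$; by item (2) (proved below) in fact $\sigma_0\cup\tau\in I(M)$ for \emph{every} base $\tau$, so fixing one base $\tau$, every finite subset of $\sigma\cup\tau$ is contained in some $\sigma_0\cup\tau$ and hence lies in $I(M)$; by (FC) for $I(M)$ we get $\sigma\cup\tau\in I(M)$, so $\sigma\in I(M.X)$. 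So (FC) reduces to item (2), and item (2) together with (I1) and (I2) is the real content.

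For item (2) and (I2) the key lemma is: if $\sigma\in I(M.X)$ and $\tau, \tau'$ are any two bases of $M\setminus X$, then $\sigma\cup\tau\in I(M) \iff \sigma\cup\tau'\in I(M)$. I would prove this by a finite exchange argument: $\sigma\cup\tau\in I(M)$ with $|\tau|=|\tau'|<\infty$; if $\sigma\cup\tau'\notin I(M)$, then since $(M, I(M))$ is finitary there is a finite $F\subseteq\sigma\cup\tau'$ with $F\notin I(M)$, and one plays $F$ off against the independent set $\sigma\cup\tau$ using (I2) — repeatedly enlarging $F\cap(\sigma\cup\tau)$ inside $\sigma\cup\tau$ while staying independent — to derive a contradiction with maximality of $\tau'$ in $M\setminus X$ (any element added from $\tau\setminus\tau'$ lies in $M\setminus X$ and would extend $\tau'$). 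Granting this lemma, item (2) is immediate: $\sigma\in I(M.X)$ means $\sigma\cup\tau_0\in I(M)$ for \emph{some} base $\tau_0$, hence for \emph{every} base $\tau$. Then (I2) follows: given finite $\sigma,\rho\in I(M.X)$ with $|\sigma|>|\rho|$, fix a base $\tau$; both $\sigma\cup\tau$ and $\rho\cup\tau$ are in $I(M)$ and are finite with $|\sigma\cup\tau|>|\rho\cup\tau|$, so (I2) for $I(M)$ yields $y\in(\sigma\cup\tau)\setminus(\rho\cup\tau)$ with $\rho\cup\tau\cup\{y\}\in I(M)$; necessarily $y\in\sigma\setminus\rho$ (it is not in $\tau$), and $\rho\cup\{y\}\in I(M.X)$ witnessed by $\tau$, as required.

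The main obstacle is the exchange lemma: one must be careful that, although $I(M)$ need not be a simplicial complex (infinite independent sets are allowed), all the sets being compared — $\sigma\cup\tau$, $\sigma\cup\tau'$, and the witness $F$ — are \emph{finite}, so (I2) applies verbatim; and one must use that $\tau'$ is \emph{maximal} in $M\setminus X$, not merely independent, so that no element of $M\setminus X$ can be appended to it. The delicate point is organizing the repeated applications of (I2): each step should move an element of $\tau\setminus\tau'$ out of the "bad" finite set $F$ or into an independent extension, and one wants a monovariant (e.g. $|F\setminus(\sigma\cup\tau')|$ or the size of the independent superset of $\tau'$ being built) that forces termination in finitely many steps; I would phrase it as: build a maximal independent $\tau''\supseteq\tau'$ inside $(\sigma\cup\tau)\cup\tau'$ using (I2) from $\sigma\cup\tau$, observe $\tau''\setminus\tau'\subseteq(\sigma\cup\tau)$, split off the part in $M\setminus X$ to contradict maximality of $\tau'$ unless $\tau''\subseteq\sigma\cup\tau'$, and then conclude $\sigma\cup\tau'\supseteq$ a set of the right size is independent. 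Everything else is bookkeeping.
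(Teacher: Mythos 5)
Your overall architecture is sound conditional on your ``key lemma'': (I1) is immediate, the derivation of (FC) from item (2) via (FC) for $I(M)$ works, and item (2) for an arbitrary $\tau\in I(M,M\setminus X)$ would follow from the case of bases by extending $\tau$ to a base (\ref{prop:finitaryMatroids}) and applying (I1). The genuine gap is in the proof of the key lemma itself, and in your verification of (I2): both silently assume that the bases of $(M\setminus X, I(M,M\setminus X))$ are finite (you write ``$|\tau|=|\tau'|<\infty$'' and call $\sigma\cup\tau$, $\rho\cup\tau$ finite). Nothing in the hypotheses grants this: $M\setminus X$ may have infinite rank (take $M$ an infinite-dimensional vector space), and then the exchange axiom (I2), which applies only to finite sets, cannot be played between the finite dependent witness $F$ and the infinite set $\sigma\cup\tau$, cardinalities cannot be compared, and the proposed ``monovariant'' construction of a maximal independent $\tau''\supseteq\tau'$ inside $(\sigma\cup\tau)\cup\tau'$ by finitely many exchanges has no reason to terminate or even to start.

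The missing idea is a reduction of the comparison of two (possibly infinite) bases to finite configurations using finitariness of $I(M)$ itself. To show $\sigma\cup\tau'\in I(M)$ it suffices by (FC) to handle a finite $\sigma_0\cup\tau_0'$ with $\sigma_0\subseteq\sigma$, $\tau_0'\subseteq\tau'$; since $\tau$ is maximal in $M\setminus X$ and circuits of a finitary matroid are finite (\ref{lm:finitaryAndCircuits}), each $t\in\tau_0'\setminus\tau$ lies in a finite circuit $C_t\subseteq\tau\cup\{t\}$, so $\tau_0'$ lies in the closure of a finite $B_0\subseteq\tau$, and one can then work inside the finite matroid on $\sigma_0\cup B_0\cup\tau_0'$, where a rank/closure computation (or an honestly finite exchange) yields that $\sigma_0\cup\tau_0'$ is independent because $\sigma_0\cup B_0$ is. A similar reduction is needed to repair your (I2) step when $\tau$ is infinite: for each finite $\tau_0\subseteq\tau$ the finite exchange gives some $x\in\sigma\setminus\rho$ with $\rho\cup\tau_0\cup\{x\}\in I(M)$, and since $\sigma\setminus\rho$ is finite and the finite subsets of $\tau$ are directed, one $x$ works for all $\tau_0$ simultaneously, whence $\rho\cup\{x\}\cup\tau\in I(M)$ by (FC). For comparison, the paper does not reprove these facts at all but quotes Lemma 3.1.7 and Proposition 3.1.9 of \cite{Oxley}; a self-contained proof along your lines is perfectly feasible, but as written it only covers the case where $M\setminus X$ has finite rank.
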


\begin{proof}
These items follow respectively from Proposition 3.1.9 and Lemma 3.1.7 of \cite{Oxley}.
\end{proof}

The following lemma describes the upper intervals of finitary matroids in terms of contractions, as it usually holds in the context of matroids.

\begin{lemma}
\label{lm:upper_interval_flats}
Let $(M,I(M))$ be a finitary matroid, and let $F\subseteq M$ be a flat.
Then the map
\[ G \in \L(M,I(M))_{\supseteq F} \longmapsto G\setminus F\in \L(M\setminus F,I(M.(M\setminus F)))\]
is an isomorphism of posets.

In particular, if $\rk_{I(M)}(M) < \infty$, $(M\setminus F,I(M.(M\setminus F)))$ has rank $\rk_{I(M)}(M) - \rk_{I(M)}(F)$.
\end{lemma}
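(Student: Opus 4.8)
The plan is to verify that the stated map $\Phi\co G\mapsto G\setminus F$ is well-defined, order-preserving, and has an order-preserving inverse, and then to read off the rank statement from \ref{prop:finitaryMatroids}. First I would check well-definedness: given a flat $G$ of $(M,I(M))$ with $G\supseteq F$, I must show $G\setminus F$ is a flat of the contraction $(M\setminus F, I(M.(M\setminus F)))$. So take $x\in (M\setminus F)\setminus (G\setminus F)=M\setminus G$ and an independent set $\sigma\in I(M.(M\setminus F))$ with $\sigma\subseteq G\setminus F$; I need $\sigma\cup\{x\}\in I(M.(M\setminus F))$. By definition of contraction there is a maximal independent set $\tau$ of $(F,I(M,F))$ with $\sigma\cup\tau\in I(M)$. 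Since $\sigma\cup\tau\subseteq G$ is independent and $x\in M\setminus G$ with $G$ a flat, we get $\sigma\cup\tau\cup\{x\}\in I(M)$; hence $\sigma\cup\{x\}\in I(M,M\setminus F)$ and, witnessed by the same $\tau$, $\sigma\cup\{x\}\in I(M.(M\setminus F))$. So $G\setminus F$ is a flat.

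For the inverse I would send a flat $H$ of the contraction to $H\cup F\subseteq M$ and show it is a flat of $M$ containing $F$. Take $x\in M\setminus (H\cup F)$ and an independent set $\rho\subseteq H\cup F$ in $I(M)$; I want $\rho\cup\{x\}\in I(M)$. Here is the one delicate point: I need to split $\rho$ into a part ``inside $F$'' and a part ``giving an independent set of the contraction.'' Using \ref{prop:finitaryMatroids} applied to $(F,I(M,F))$, extend $\rho\cap F$ to a basis $\tau$ of $F$; since $\rho$ is independent one can grow $\tau$ within $\rho\cup F$ and, after discarding from $\rho$ the elements that become dependent over $\tau$, arrange $\rho\subseteq \rho'\cup\tau$ with $\rho'\subseteq H$, $\rho'\cup\tau\in I(M)$, so $\rho'\in I(M.(M\setminus F))$ by definition. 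As $H$ is a flat of the contraction and $x\in (M\setminus F)\setminus H$, we get $\rho'\cup\{x\}\in I(M.(M\setminus F))$, and then \ref{prop:contractionIsFinitary}(2) (with the basis $\tau\in I(M,F)$) yields $\rho'\cup\tau\cup\{x\}\in I(M)$, whence $\rho\cup\{x\}\in I(M)$ by (I1). Thus $H\cup F$ is a flat of $M$. Since $\Phi$ and $\Psi\co H\mapsto H\cup F$ are visibly inclusion-preserving and mutually inverse on the respective posets, $\Phi$ is an isomorphism.

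Finally, for the rank assertion, assume $\rk_{I(M)}(M)<\infty$. Then $F$ has a finite basis $\tau$ with $|\tau|=\rk_{I(M)}(F)$, and by \ref{prop:contractionIsFinitary}(2) a basis $\sigma$ of the contraction together with $\tau$ gives an independent set $\sigma\cup\tau$ of $M$; maximality of $\tau$ in $F$ and of $\sigma$ in the contraction, plus \ref{prop:finitaryMatroids}, force $\sigma\cup\tau$ to be a basis of $M$, so $|\sigma|=\rk_{I(M)}(M)-\rk_{I(M)}(F)$. I expect the main obstacle to be exactly the bookkeeping in the inverse direction: producing the decomposition $\rho\subseteq\rho'\cup\tau$ with $\rho'$ independent over the chosen basis $\tau$ of $F$, which is where the exchange property (I2) and \ref{prop:finitaryMatroids} do the real work; everything else is formal manipulation of the definitions.
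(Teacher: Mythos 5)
Your first direction (checking that $G\mapsto G\setminus F$ lands in $\L(M\setminus F, I(M.(M\setminus F)))$) and the final rank computation are fine, and they match what the paper treats as routine. The genuine gap is exactly at the point you flag as delicate: the decomposition you ask for does not exist in general. You want, for an independent $\rho\subseteq H\cup F$, a basis $\tau$ of $F$ containing $\rho\cap F$ and a set $\rho'\subseteq H$ with $\rho\subseteq\rho'\cup\tau$ and $\rho'\cup\tau\in I(M)$; since $\tau\subseteq F$, the containment $\rho\subseteq\rho'\cup\tau$ forces $\rho\setminus F\subseteq\rho'$, so your claim amounts to asserting that $\rho\setminus F$ is independent in the contraction. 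That is false. Take $M=\{e_1,e_2,e_3,e_1+e_3\}\subseteq\RR^3$ with linear independence, $F=\{e_3\}$ (a flat), $H=\{e_1,e_1+e_3\}$ (a flat of the contraction), and $\rho=\{e_1,e_1+e_3\}\subseteq H\cup F$, which is independent in $M$; but $\rho\cup\{e_3\}$ is dependent, so $\rho=\rho\setminus F\notin I(M.(M\setminus F))$ and no decomposition of your form exists --- even though $\rho\cup\{e_2\}$ is independent, as the lemma predicts. Your own phrase ``after discarding from $\rho$ the elements that become dependent over $\tau$'' points at the inconsistency: if you discard anything, then $\rho\not\subseteq\rho'\cup\tau$ and the closing appeal to (I1) no longer covers all of $\rho$, whereas if you discard nothing, the independence of $\rho'\cup\tau$ can fail as above.

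What the paper does instead is precisely designed to get around this. It first reduces, via \ref{prop:finitaryMatroids} and (I1), to the case where $\sigma$ (your $\rho$) is a \emph{basis} of $G=S\cup F$; then, with $\tau$ a basis of $F$ and $E=\sigma\cup F$, it extends $\tau$ to a basis $\sigma'\cup\tau$ of $E$ with $\sigma'\subseteq\sigma\setminus F$ (here $\sigma'$ may genuinely omit elements of $\sigma$), uses flatness of $S$ in the contraction together with \ref{prop:contractionIsFinitary} to see that $\sigma'\cup\tau\cup\{x\}$ is a basis of $E'=E\cup\{x\}$, and finally, assuming $\sigma\cup\{x\}$ dependent, compares the two bases $\sigma'\cup\tau\cup\{x\}$ and $\sigma$ of $E'$ and applies the basis-exchange lemma (Lemma 3.1.6 of \cite{Oxley}) to produce an independent set strictly containing the basis $\sigma'\cup\tau$ of $E$, a contradiction. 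Some argument of this exchange type is the missing ingredient your sketch needs to handle the elements that get discarded; without it the surjectivity direction is not established.
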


\begin{proof}
Write $X = M\setminus F$.
For $G\in \L(M,I(M))_{\supseteq F}$, let $\psi(G) := G\setminus F$.
It is straightforward to verify that $\psi(G) \in \L(X, I(M.X))$, so $\psi$ is an order-preserving and injective map.
Also, $\psi(G_1) \subseteq \psi(G_2)$ implies that $G_1\subseteq G_2$.
Thus, to show that $\psi$ is an isomorphism, it remains to see that $\psi$ is surjective.
In fact, the inverse of $\psi$ should be given by $S\mapsto S\cup F$, as we will see next.

Let $S\in \L(X, I(M.X))$, and set $G := S\cup F$.
We show that $G$ is a flat for $(M,I(M))$ (which clearly contains $F$).
Let $x\in M\setminus G$, and let $\sigma\in I(M,G)$ be an independent set.
We must prove that $\sigma\cup \{x\}\in I(M)$.
As $(M,I(M))$ is a finitary matroid, by \ref{prop:finitaryMatroids} it is enough to show this for $\sigma \in \B(G,I(M,G))$ a basis of $G$.

Let $\tau \in \B(F,I(M,F))$, and set $E := \sigma\cup F$, $I(E) := I(M,E)$, $E' := E\cup \{x\}$ and $I(E') := I(M,E')$.
Note that $\sigma \in \B(E,I(E))$ since $E\subseteq G$ and $\sigma$ is already a basis for $G$.
Then, as $\tau\subseteq E$, by \ref{prop:finitaryMatroids} there exists a subset $\sigma'$ of $\sigma\setminus F$ such that $\sigma'\cup\tau$ is a basis of $(E, I(E))$.
By definition, $\sigma'$ is an independent set of $(X,I(M.X))$ contained in the flat $S$.
As $x\in X\setminus S$, we see that $\sigma'\cup \{x\}\in I(M.X)$.
By \ref{prop:contractionIsFinitary}, $\sigma'\cup \tau\cup \{x\}$ is an independent set of $M$ contained in $E'$.
Since $\sigma'\cup \tau \in \B(E, I(E))$, $\sigma'\cup \tau\cup \{x\}$ must be a basis of $(E',I(E'))$.

Now we argue by the way of contradiction and suppose that $\sigma\cup \{x\}$ is a dependent set.
This implies that $\sigma$ is a basis of $(E', I(E'))$.
Let $B_1 := \sigma'\cup \tau \cup \{x\}$ and $B_2 := \sigma$.
Then $B_1,B_2 \in \B(E', I(E')) $, with $x\in B_1\setminus B_2$.
By Lemma 3.1.6 of \cite{Oxley}, there is $y\in B_2\setminus B_1 = \sigma\setminus (\sigma'\cup \tau)$ such that $(B_1\setminus \{x\})\cup \{y\}$ is an independent set (even a basis) of $(E',I(E'))$.
But then $(\sigma'\cup \{y\})\cup \tau$ is an independent set of $(E,I(E))$ strictly containing the basis $\sigma'\cup \tau$, a contradiction.
Therefore, $\sigma\cup \{x\}$ is an independent set.

This proves that $\psi$ is surjective, and hence an isomorphism of posets.
The In particular part follows by just looking at the previous proof in the finite-rank case.
This concludes the proof of this lemma.
\end{proof}

We are not aware of a similar description of upper intervals for arbitrary pi-spaces.

We now turn our attention to combinatorial properties of the poset of flats $\L(M,I(M))$.

\begin{lemma}
\label{lm:intersectionFlats}
Let $(F_i)_{i\in I}$ be a collection of flats of a pi-space $(M,I(M))$.
Then $\bigcap_{i\in I} F_i$ is a flat.

In particular, $\L(M,I(M))$ is a complete lattice.
\end{lemma}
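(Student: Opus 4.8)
The plan is to check directly that an arbitrary intersection of flats is again a flat, and then to deduce that $\L(M,I(M))$ is a complete lattice from the general fact that a poset in which all infima exist and which has a greatest element is automatically a complete lattice.

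First I would prove the intersection statement. Let $F=\bigcap_{i\in I}F_i$, where the empty intersection is read as $M$ (which is a flat). Fix $x\in M\setminus F$ and an independent set $\sigma\subseteq F$; the goal is to show $\sigma\cup\{x\}\in I(M)$. Since $x\notin F$, there is an index $j\in I$ with $x\notin F_j$. As $\sigma\subseteq F\subseteq F_j$, the set $\sigma$ is an independent set contained in the flat $F_j$ while $x\in M\setminus F_j$, so by the definition of a flat $\sigma\cup\{x\}\in I(M)$. Hence $F$ is a flat.

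For the ``in particular'' part, recall from \ref{lm:boundedFlats} that $\L(M,I(M))$ has greatest element $M$. Given any subset $\mathcal{S}\subseteq\L(M,I(M))$, its intersection $\bigcap_{F\in\mathcal{S}}F$ is a flat by the previous paragraph, and it is clearly the greatest lower bound of $\mathcal{S}$: it lies below every member of $\mathcal{S}$, and any flat contained in all members of $\mathcal{S}$ is contained in the intersection. (When $\mathcal{S}=\emptyset$ this infimum is $M$.) Thus $\L(M,I(M))$ is a complete meet-semilattice with a top element, and defining $\bigvee\mathcal{S}$ to be the infimum of the set of upper bounds of $\mathcal{S}$, which is nonempty since it contains $M$, one checks at once that this produces a least upper bound; hence all joins exist as well and $\L(M,I(M))$ is a complete lattice. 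The core argument is a one-line application of the flat axiom; the only points requiring a moment's care are the bookkeeping in the degenerate cases (empty index set $I$, empty subset $\mathcal{S}$) and the recollection of the standard lattice-theoretic fact just used.
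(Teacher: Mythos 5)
Your proof is correct and follows essentially the same route as the paper: the flat property of the intersection is verified directly from the definition (choosing an index $j$ with $x\notin F_j$), and completeness of $\L(M,I(M))$ is obtained by taking meets to be intersections and joins to be the intersection of all upper bounds, which is nonempty since $M$ is a flat. The extra care with the degenerate cases is fine but not needed beyond what the paper's argument already covers.
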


\begin{proof}
Denote $F = \bigcap_i F_i$, and take $x\in M \setminus F$ and $\sigma\subseteq F$ an independent set.
Then there exists $i$ such that $x\notin F_i$.
Since $\sigma\subseteq F_i$, we have $\sigma\cup \{x\}\in I(M)$.
Thus $F$ is a flat.

Let $X\subseteq \L(M,I(M))$.
Then $\bigvee_{F\in X} F$ is the intersection of all upper bounds of $X$,
and $\bigwedge_{F\in X} F = \bigcap_{F\in X} F$.
\end{proof}

When $(M,I(M))$ is a finite-rank pi-space, we recover the geometric lattice (i.e., atomistic and semimodular) properties of the lattice 
of flats of a matroid, as stated below.

\begin{proposition} \label{prop:geometric}
Let $(M,I(M))$ be a finite-rank pi-space.
Then $\L(M,I(M))$ is a ranked, atomistic and semimodular lattice.
That is:
\begin{enumerate}
    \item $\L(M,I(M))$ is a lattice with rank function $\rk_{I(M)}$.
    \item $\L(M,I(M))$ is atomistic: every flat is the join of the atoms below it.
    \item $\L(M,I(M))$ is semimodular: for all $F,G\in \L(M,I(M))$ we have $\rk_{I(M)}(F)+\rk_{I(M)}(G) \geq \rk_{I(M)}(F\vee G) + \rk_{I(M)}(F\wedge G)$. 
\end{enumerate}
\end{proposition}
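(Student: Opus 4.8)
The plan is to realise $\L(M,I(M))$ as the lattice of closed sets of the finite-rank closure operator of $M$, and then run the classical argument that produces a geometric lattice. Since a finite-rank pi-space is a finitary matroid (as noted right after \ref{def:finitaryMatroid}), \ref{prop:finitaryMatroids} applies to every restriction $(F,I(M,F))$, $F\subseteq M$ — itself finitary by \ref{prop:basics}(3): all maximal independent subsets of $F$ have cardinality $\rk_{I(M)}(F)$, and every independent subset of $F$ extends to one. For $X\subseteq M$ set $\cl(X):=\{y\in M\tq \rk_{I(M)}(X\cup\{y\})=\rk_{I(M)}(X)\}$.

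First I would record the elementary rank facts: (a) $\rk_{I(M)}(X\cup\{y\})\le\rk_{I(M)}(X)+1$ always (an independent subset of $X\cup\{y\}$ meets $X$ in an independent set after removing $y$); (b) rank is strictly monotone on flats — if $F\subsetneq G$ are flats and $y\in G\setminus F$, extend a basis $\sigma$ of $F$, and since $F$ is a flat $\sigma\cup\{y\}\in I(M)$, so $\rk_{I(M)}(G)>\rk_{I(M)}(F)$. The key identification is (c): the closed sets $\{X\tq \cl(X)=X\}$ are exactly the flats. Indeed, if $F$ is a flat and $y\notin F$, a basis of $F$ together with $y$ is independent, so by (a) $\rk_{I(M)}(F\cup\{y\})=\rk_{I(M)}(F)+1$ and $y\notin\cl(F)$, whence $\cl(F)=F$; conversely, if $\cl(F)=F$, $x\notin F$, and $\sigma\subseteq F$ is independent, extend $\sigma$ to a basis $\sigma'$ of $F$ and then $\sigma'$ to a basis of $F\cup\{x\}$, which by $x\notin\cl(F)$ and (a) has size $\rk_{I(M)}(F)+1$ and so equals $\sigma'\cup\{x\}$; hence $\sigma\cup\{x\}\in I(M)$ and $F$ is a flat. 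From (c) one extracts the standard consequences: $\cl(X)$ is the least flat containing $X$ (so $\cl$ is monotone and idempotent), $F=\cl(\sigma)$ for any basis $\sigma$ of a flat $F$, and $\rk_{I(M)}(\cl(X))=\rk_{I(M)}(X)$ (a basis of $X$ is a maximal independent subset of $\cl(X)$; apply \ref{prop:finitaryMatroids} to $(\cl(X),I(M,\cl(X)))$).

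The three assertions then follow by the usual bookkeeping. For the rank-function statement: given a flat $F$ and $x\in M\setminus F$, the flat $\cl(F\cup\{x\})$ contains $F\cup\{x\}$, has rank $\rk_{I(M)}(F)+1$, and by (b) admits no flat strictly between it and $F$; so it covers $F$, and conversely a covering $F\lessdot G$ gives $\rk_{I(M)}(G)=\rk_{I(M)}(F)+1$ (pick $x\in G\setminus F$; then $F\subsetneq\cl(F\cup\{x\})\subseteq G$ forces $\cl(F\cup\{x\})=G$). Hence every maximal chain from $0_{I(M)}$ (rank $0$) to $M$ (rank $\rk_{I(M)}(M)$) has length $\rk_{I(M)}(M)$, and $\rk_{I(M)}$ is the rank function; that $\L(M,I(M))$ is a lattice is \ref{lm:intersectionFlats}. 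For atomisticity: the atoms are exactly the flats $\cl(\{x\})$ with $x$ a non-loop (an atom $A$ contains such an $x$, and $0_{I(M)}\subsetneq\cl(\{x\})\subseteq A$ forces $A=\cl(\{x\})$, of rank $1$), and for a flat $F$ with basis $\sigma$ we have $F=\cl(\sigma)=\bigvee_{x\in\sigma}\cl(\{x\})$, a join of atoms below $F$, so $F$ is the join of all atoms below it. For semimodularity: let $\sigma$ be a basis of $F\wedge G=F\cap G$ and extend it to bases $\sigma\cup\alpha$ of $F$ and $\sigma\cup\beta$ of $G$ (with $\alpha,\beta$ finite); since $F=\cl(\sigma\cup\alpha)$ and $G=\cl(\sigma\cup\beta)$ both lie in $\cl(\sigma\cup\alpha\cup\beta)$, we get $F\vee G=\cl(F\cup G)\subseteq\cl(\sigma\cup\alpha\cup\beta)$, so $\rk_{I(M)}(F\vee G)\le\rk_{I(M)}(\sigma\cup\alpha\cup\beta)\le|\sigma|+|\alpha|+|\beta|=\rk_{I(M)}(F)+\rk_{I(M)}(G)-\rk_{I(M)}(F\wedge G)$.

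The only real subtlety is that $M$ may be infinite, so there is no ``finite submatroid'' to invoke directly: every manipulation of ranks and bases above is legitimate only because \ref{prop:finitaryMatroids} provides well-defined ranks and well-behaved bases for all restrictions. Accordingly, the least routine step is the equivalence (c) between closed sets and flats — especially the direction ``closed $\Rightarrow$ flat'', where one must pass from an arbitrary independent subset of $F$ to a basis and back — together with $\rk_{I(M)}(\cl(X))=\rk_{I(M)}(X)$; once these are in place, the remainder is the proof for finite matroids transcribed essentially verbatim.
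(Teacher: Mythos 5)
Your argument is correct. Where it differs from the paper: the paper does not write out a proof at all, but instead observes that the argument is identical to Welsh's proof of Theorem 5 in \cite[Chapter 20]{Welsh}, the only extra input being that in the finite-rank case flats coincide with closed sets for the circuit-based closure of \ref{sec:flats} (a fact the paper obtains later, via \ref{lm:finitaryAndCircuits}, \ref{lm:finiteRankSets} and \ref{thm:homotopyTypeClosedSets}). You instead make the proof self-contained by working with the rank-defined closure $\cl(X)=\{y\tq \rk_{I(M)}(X\cup\{y\})=\rk_{I(M)}(X)\}$, proving directly that its closed sets are exactly the flats using basis extension in the finitary restrictions supplied by \ref{prop:finitaryMatroids} and \ref{prop:basics}(3), and then transcribing the classical finite-matroid bookkeeping for gradedness, atomisticity and the rank inequality. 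This buys you independence from the circuit machinery and from the forward reference to \ref{thm:homotopyTypeClosedSets} (and from Welsh), at the cost of carrying a second closure operator not used elsewhere in the paper; the paper's route is shorter and keeps a single notion of closure, consistent with \ref{sec:closedSets}. Minor points you leave implicit but which are routine in your setting: that the one element added when extending a basis $\sigma'$ of $F$ to a basis of $F\cup\{x\}$ must be $x$ itself (any element of $F$ would violate maximality of $\sigma'$ in $F$), and the verification that your $\cl$ is idempotent with $\cl(X)$ the least flat containing $X$.
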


The proof of \ref{prop:geometric} is identical with 
the proof of Theorem 5 in \cite[Chapter 20]{Welsh}. It will require
though that under its assumptions, being a flat and being closed are
equivalent. This fact is only proved later in \ref{thm:homotopyTypeClosedSets}.
Since \ref{prop:geometric} is not used anywhere in the text and fits perfectly in its spot, we 
consider this deviation from standard style acceptable.

Next, we look at circuits and the closure operator.

\begin{definition}
Let $(M,I(M))$ be a pi-space.
\begin{enumerate}
    \item A \textit{circuit} is a subset $C\subseteq M$ that is a minimal dependent set.
    \item The \textit{closure} of a subset $X\subseteq M$ is $$\cl(X) = X \cup \{x\in M \tq \text{there exists a circuit $C$ such that } x\in C\subseteq X\cup \{x\} \}.$$
\end{enumerate}
We may write $\cl_{I(M)}$ to emphasize that the closure is taken in the pi-space $(M,I(M))$.
\end{definition}

\begin{remark}
\label{rk:closureAndloops}
Circuits of size one are exactly the sets $\{x\}$ where $x$ is a loop of $(M,I(M))$.
Therefore, the closure of the empty independent set is $\cl(\emptyset) = 0_{I(M)}$, the set of all loops.
More generally, $0_{I(M)} \subseteq \cl(X)$ for any subset $X\subseteq M$.
If $X$ consists only of loops, then we also have $\cl(X) = 0_{I(M)}$.
\end{remark}

Note that we may have an infinite descending chain of dependent sets that contain no circuits.

\begin{example}
\label{ex:finiteSubsetsIndependent}
Suppose $I(M)$ is the set of finite subsets of $M$, where $M$ is an infinite set.
Therefore, $(M,I(M))$ is a pi-space.

Note that a subset $X\subseteq M$ is dependent if and only if $X$ is infinite.
As every infinite set has an infinite proper subset, we conclude that there are no circuits in this pi-space.
Moreover, we can produce infinite descending chains of dependent sets.
%Note also that (FC) fails here, while $I(M) = I(M)_{\fin}$.
\end{example}

More generally, we have:

\begin{example}
Suppose that $M$ is an infinite set of cardinality $\alpha$.
Let $I(M)$ consist of the subsets of $M$ of cardinality $<\alpha$.
Then $(M,I(M))$ is a pi-space where dependent sets are subsets of $M$ of cardinality $\neq\alpha$.
As $M$ is infinite, every subset of cardinality $\alpha$ contains a proper subset of cardinality $\alpha$.
Thus, $(M,I(M))$ contains no circuits.
Note that this is not a finitary matroid.
\end{example}

\begin{example}
Let $V$ be a non-zero vector space, and let $(V,I(V))$ be the pi-space defined in \ref{ex:vectorspace} (see also \ref{ex:vectorSpacesAndFlats}).
Then a circuit is exactly a minimal linearly dependent set of non-zero vectors of $V$.
In particular, circuits are finite.
\end{example}

Expanding on the previous example, finitary matroids always have finite circuits, as stated in the following lemma, whose proof is straightforward.

\begin{lemma}
\label{lm:finitaryAndCircuits}
If $(M,I(M))$ is a finitary matroid, then circuits are finite, and every dependent set contains a circuit.
\end{lemma}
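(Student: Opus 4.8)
The statement to prove is \ref{lm:finitaryAndCircuits}: if $(M,I(M))$ is a finitary matroid, then circuits are finite, and every dependent set contains a circuit.

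\medskip

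\noindent\textbf{Proof plan.}
The plan is to deduce both assertions directly from the defining property (FC) of a finitary matroid, which says that a set is independent as soon as all of its finite subsets are independent. Contrapositively, (FC) says that a dependent set must already contain a \emph{finite} dependent subset.

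First I would show that circuits are finite. Let $C$ be a circuit, i.e., a minimal dependent set. Since $C$ is dependent, (FC) (in contrapositive form) gives a finite subset $C_0 \subseteq C$ that is dependent. By minimality of $C$ as a dependent set, we must have $C_0 = C$, so $C$ is finite.

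Next I would show that every dependent set contains a circuit. Let $X \subseteq M$ be dependent. By (FC), choose a finite dependent subset $X_0 \subseteq X$. Since $X_0$ is a finite dependent set, the collection of dependent subsets of $X_0$ is a non-empty finite poset under inclusion, hence has a minimal element $C$; that minimal element is a minimal dependent set, i.e., a circuit, and $C \subseteq X_0 \subseteq X$. (Here I am using that a minimal dependent subset of $X_0$ is automatically a minimal dependent set in $M$: any dependent proper subset would also be a dependent proper subset inside $X_0$.)

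\medskip

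\noindent\textbf{Main obstacle.}
There is essentially no obstacle: the only subtlety is the observation that a set which is minimal-dependent \emph{among subsets of a fixed finite set} is in fact minimal-dependent in all of $M$, which is immediate since subsets of a subset are subsets of the whole. The finiteness used to extract a minimal dependent subset comes for free once (FC) reduces us to a finite witness of dependence. Thus the proof is a two-line application of (FC) in each case, matching the paper's description of it as ``straightforward''.
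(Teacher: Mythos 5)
Your proof is correct and is exactly the straightforward argument the paper has in mind (the paper omits the proof, calling it straightforward): the contrapositive of (FC) supplies a finite dependent subset, which forces a circuit to equal its finite dependent witness, and a minimal dependent subset of that finite witness is a circuit of $M$ contained in the given dependent set. No gaps.
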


% \begin{proof}
% If $X$ is a finite dependent set of $(M,I(M))$, then it contains a finite circuit.
% If $X$ is an infinite subset of $M$ with no finite dependent subset, then $X$ must be independent by (FC).
% These two observations imply the assertions of the statement.
% \end{proof}

Note that if $I(M)$ consists of all subsets of $M$, then $(M,I(M))$ is a finitary matroid but contains no circuits.

We have seen examples of pi-spaces containing dependent sets but no circuits.
The next example shows that circuits might be infinite if $(M,I(M))$ is not a finitary matroid.

\begin{example}
Suppose $M$ is an infinite set, and take $I(M)$ to be the set of all proper subsets of $M$.
Thus $(M,I(M))$ is a pi-space with exactly one circuit, namely $M$, which is infinite.
Clearly, $(M,I(M))$ is not a finitary matroid.
\end{example}

Recall that if $(M,I(M))$ is a matroid, then $F\subseteq M$ is a flat if and only if $F = \cl(F)$.
Moreover, for any subset $X\subseteq M$, $\cl(X)$ is a flat, so $\cl(\cl(X)) = \cl(X)$ is an order-preserving idempotent operator.
In the general context of pi-spaces, $\cl$ is still an order-preserving operator, but it might not be idempotent in general, as we will see later in \ref{ex:infiniteAscChainNonClosed}.

The next lemma shows that $\cl$ is the identity on flats, and so $\cl(\cl(X)) = \cl(X)$ if $\cl(X)$ is a flat.

\begin{lemma}
\label{lm:flatClosure}
Let $(M,I(M))$ be a pi-space, and $F\subseteq M$.
If $F$ is a flat, then $F = \cl(F)$.
The converse holds if $(M,I(M))$ is a finitary matroid.
\end{lemma}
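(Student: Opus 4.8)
The plan is to prove both directions separately. For the forward direction, suppose $F$ is a flat. I first note that $F \subseteq \cl(F)$ always holds, so I only need $\cl(F) \subseteq F$. Take $x \in \cl(F)$; if $x \in F$ there is nothing to prove, so assume $x \in \cl(F) \setminus F$. By definition of closure, there is a circuit $C$ with $x \in C \subseteq F \cup \{x\}$. Set $C' = C \setminus \{x\} \subseteq F$. Since $C$ is a minimal dependent set and $x \in C$, the proper subset $C'$ is independent, i.e., $C' \in I(M)$. But then $C' \subseteq F$ is an independent subset of the flat $F$ and $x \in M \setminus F$, so by the definition of a flat we get $C' \cup \{x\} = C \in I(M)$, contradicting that $C$ is dependent. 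Hence $\cl(F) \setminus F = \emptyset$ and $F = \cl(F)$.

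For the converse, assume $(M,I(M))$ is a finitary matroid and $F = \cl(F)$; I want to show $F$ is a flat. Take $x \in M \setminus F$ and an independent set $\sigma \subseteq F$, and suppose toward a contradiction that $\sigma \cup \{x\}$ is dependent. By \ref{lm:finitaryAndCircuits}, since $(M,I(M))$ is a finitary matroid, the dependent set $\sigma \cup \{x\}$ contains a circuit $C$, and moreover $C$ is finite. Since $\sigma$ itself is independent, $C$ cannot be contained in $\sigma$, so $x \in C$; thus $C \subseteq \sigma \cup \{x\} \subseteq F \cup \{x\}$ with $x \in C$. By the definition of closure this means $x \in \cl(F) = F$, contradicting $x \in M \setminus F$. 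Therefore $\sigma \cup \{x\} \in I(M)$, and $F$ is a flat.

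The only genuinely delicate point is the use of \ref{lm:finitaryAndCircuits} in the converse: in a general pi-space a dependent set need not contain any circuit (as the examples in the excerpt show), so the argument genuinely breaks down there, and the finitary hypothesis is exactly what is needed to repair it. The forward direction, by contrast, needs no extra hypotheses since it only uses the minimality in the definition of a circuit together with the definition of a flat. Everything else is unwinding definitions, so I expect the write-up to be short.

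\begin{proof}
Since $X \subseteq \cl(X)$ for every $X \subseteq M$, in order to prove $F = \cl(F)$ it suffices to show $\cl(F) \subseteq F$.
Assume $F$ is a flat and suppose, for contradiction, that there is some $x \in \cl(F) \setminus F$.
By the definition of the closure, there exists a circuit $C$ with $x \in C \subseteq F \cup \{x\}$.
Let $C' := C \setminus \{x\}$, so $C' \subseteq F$.
Since $C$ is a minimal dependent set and $x \in C$, the proper subset $C' \subsetneq C$ is independent, i.e., $C' \in I(M)$.
Now $C'$ is an independent set contained in the flat $F$, and $x \in M \setminus F$, so by the definition of a flat we get $C' \cup \{x\} = C \in I(M)$, contradicting that $C$ is dependent.
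Hence $\cl(F) \setminus F = \emptyset$, and $F = \cl(F)$.

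Conversely, assume that $(M,I(M))$ is a finitary matroid and that $F = \cl(F)$; we show that $F$ is a flat.
Take $x \in M \setminus F$ and an independent set $\sigma \subseteq F$, and suppose, for contradiction, that $\sigma \cup \{x\} \notin I(M)$.
By \ref{lm:finitaryAndCircuits}, the dependent set $\sigma \cup \{x\}$ contains a circuit $C$.
Since $\sigma \in I(M)$, condition (I1) forbids $C \subseteq \sigma$, so $x \in C$.
Thus $x \in C \subseteq \sigma \cup \{x\} \subseteq F \cup \{x\}$, which by the definition of the closure gives $x \in \cl(F) = F$, contradicting $x \in M \setminus F$.
Therefore $\sigma \cup \{x\} \in I(M)$, and $F$ is a flat.
\end{proof}
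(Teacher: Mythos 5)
Your proof is correct and follows essentially the same route as the paper's: in the forward direction the paper also uses the independent set $F\cap C$ (which equals your $C'=C\setminus\{x\}$) together with the flat condition to contradict the dependence of $C$, and in the converse it likewise extracts a circuit from $\sigma\cup\{x\}$ via \ref{lm:finitaryAndCircuits} and notes it must contain $x$, forcing $x\in\cl(F)=F$. No gaps; the remark on finiteness of the circuit is harmless but unused.
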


\begin{proof}
We prove that if we have $x\in \cl(F)\setminus F$, then $F$ is not a flat.
For such an $x$, there exists a circuit $C$ such that $x\in C\subseteq F\cup \{x\}$.
As $C$ is a circuit, the intersection $F\cap C$ is an independent set in $F$, and $C = (F\cap C) \cup \{x\}$ is dependent, so $F$ cannot be a flat.
%By definition of flat, $C = \sigma \cup \{x\}$ is an independent set, contradicting that $C$ is a circuit.
%This shows that $\cl(F) = F$.

Conversely, assume that $\cl(F) = F$ and $(M,I(M))$ is a finitary matroid.
Suppose there exist $x\in M\setminus F$ and an independent set $\sigma\subseteq F$ such that $\sigma\cup \{x\}$ is a dependent set.
Then, by \ref{lm:finitaryAndCircuits}, $\sigma\cup \{x\}$ contains a circuit $C$ which must contain $x$.
But this implies that $x\in \cl(F) = F$, a contradiction.
Thus $\sigma\cup \{x\}\in I(M)$, and hence $F$ is a flat.
\end{proof}

Next, we will focus on relating flats to the closure of sets.
We start out with finite independent sets.
Recall that $I(M)_{\fin}$ denotes the collection of finite independent sets of $(M,I(M))$.

\begin{lemma}
\label{lm:finiteIndFlatProperties}
Let $(M,I(M))$ be a pi-space.
If $\sigma\in I(M)$ is finite, then $\rk(\cl(\sigma)) = |\sigma|$, and $\cl(\sigma)$ is a flat.
\end{lemma}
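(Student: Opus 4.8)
The plan is to prove the two assertions separately, starting with the rank computation, which will then feed into the verification that $\cl(\sigma)$ is a flat. Write $F = \cl(\sigma)$. For the inequality $\rk(F) \le |\sigma|$, I would argue by contradiction: suppose there is an independent set $\tau \subseteq F$ with $|\tau| > |\sigma|$. Since $\tau$ is independent and finite (shrink if necessary to one of size $|\sigma|+1$), the exchange property (I2) yields an element $x \in \tau \setminus \sigma$ with $\sigma \cup \{x\} \in I(M)$. But $x \in F \setminus \sigma = \cl(\sigma) \setminus \sigma$, so by definition of closure there is a circuit $C$ with $x \in C \subseteq \sigma \cup \{x\}$; then $C \subseteq \sigma \cup \{x\}$ forces $C = (\sigma \cap C) \cup \{x\}$ with $\sigma \cap C$ independent, so $\sigma \cup \{x\} \supseteq C$ is dependent, contradicting $\sigma \cup \{x\} \in I(M)$. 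The reverse inequality $\rk(F) \ge |\sigma|$ is immediate since $\sigma \subseteq F$ is independent.

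For the claim that $F$ is a flat, take $x \in M \setminus F$ and an independent set $\rho \subseteq F$; I must show $\rho \cup \{x\} \in I(M)$. First I would reduce to the case $\rho$ finite and, using (I2) repeatedly together with the rank bound just proved, to the case where $\rho$ has maximal size $|\sigma|$ among independent subsets of $F$ — in fact by the exchange property one can extend any such $\rho$, but since $\rk(F) = |\sigma|$, a maximal independent subset of $F$ has exactly $|\sigma|$ elements, so $\rho$ itself is such a maximal set once we've enlarged it. (If $\rho\cup\{x\}$ is independent for the enlarged $\rho$, it is independent for the original smaller $\rho$ by (I1).) So assume $|\rho| = |\sigma|$ and $\rho$ is a maximal independent subset of $F$. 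Now suppose for contradiction that $\rho \cup \{x\}$ is dependent. The key point is to relate this to $\sigma$: since both $\rho$ and $\sigma$ are maximal independent subsets of $F$, and since $F = \cl(\sigma)$, I want to show $\cl(\rho) = \cl(\sigma) = F$ as well, or more directly that $x \in \cl(\rho)$, which will contradict $x \notin F$.

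The main obstacle is exactly this last step: showing that $\rho\cup\{x\}$ dependent implies $x \in \cl(\rho)$, i.e. that $\rho \cup \{x\}$ contains a circuit through $x$. This does not follow from the definition of circuit in a general pi-space (an infinite descending chain of dependent sets with no circuit is possible, per \ref{ex:finiteSubsetsIndependent}). However, here $\rho \cup \{x\}$ is \emph{finite}, and in a finite set every dependent subset contains a minimal dependent subset, i.e. a circuit; moreover that circuit must contain $x$, since $\rho$ itself is independent. Hence $x \in \cl(\rho)$. It then remains to show $\cl(\rho) \subseteq \cl(\sigma) = F$: for $y \in \cl(\rho)\setminus\rho$ there is a circuit $C'$ with $y \in C' \subseteq \rho \cup \{y\} \subseteq F \cup \{y\}$; if $y \in F$ we are done, and if $y \notin F$ then $C' \setminus \{y\} \subseteq \rho$ is independent while $C'$ is dependent — but I need to land $y$ inside $F = \cl(\sigma)$, so I would instead argue that any $y$ with $\rho\cup\{y\}$ dependent and $y\notin\rho$ lies in $\cl(\sigma)$ by the symmetric role of $\rho$ and $\sigma$ as maximal independent subsets of $F$: one transfers circuits using (I2) to move between the two bases $\rho,\sigma$ of $F$. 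Concretely, since $|\rho|=|\sigma|=\rk(F)$ and both sit inside $F$, the finite matroid structure on any finite subset of $F$ containing $\rho\cup\sigma$ (which is a genuine matroid by \ref{prop:basics}) lets me invoke the usual finite-matroid fact that $\cl$ of a basis of a finite flat is that flat, giving $\cl_{I(M)}(\rho) \supseteq \sigma$, hence $x\in\cl(\rho)$ would force $\rho\cup\{x\}$... — at which point I conclude $x \in \cl(\sigma) = F$, the desired contradiction with $x \notin F$. Therefore $\rho \cup \{x\} \in I(M)$, and $F$ is a flat.
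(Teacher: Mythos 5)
The rank computation in your first paragraph is correct and is essentially the paper's own argument, and the reduction of the flat claim to a finite $\rho$ together with the observation that a \emph{finite} dependent set $\rho\cup\{x\}$ contains a circuit through $x$ (so $x\in\cl(\rho)$) is also fine. The genuine gap is the step you yourself flag as the main obstacle: passing from $x\in\cl(\rho)$ to $x\in\cl(\sigma)=F$. As written this deduction is never carried out --- the sentence ``hence $x\in\cl(\rho)$ would force $\rho\cup\{x\}\dots$'' trails off --- and the facts you invoke to bridge it are not available at the level of the pi-space: in a general pi-space $\cl$ is not idempotent (cf.\ \ref{ex:infiniteAscChainNonClosed}), and ``the closure of a basis of a flat is that flat'' cannot be quoted for $F=\cl(\sigma)$, since $F$ being a flat is exactly what you are proving. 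These facts only become usable after restricting to a finite subset, where \ref{prop:basics} gives an honest matroid and where one must also check that circuits of the restriction are circuits of $(M,I(M))$. Moreover, the restriction you propose --- ``any finite subset of $F$ containing $\rho\cup\sigma$'' --- cannot work as stated, because $x\notin F$, so that finite matroid never sees $x$ and cannot produce the circuit through $x$ inside $\sigma\cup\{x\}$ that you need. The argument is repairable: take $N=\sigma\cup\rho\cup\{x\}$, note that $\rho\subseteq\cl_{I(M,N)}(\sigma)$ (the witnessing circuits lie in $\sigma\cup\{y\}\subseteq N$) and $x\in\cl_{I(M,N)}(\rho)$, and use idempotency of closure in the finite matroid $(N,I(M,N))$ to obtain a circuit $C$ with $x\in C\subseteq\sigma\cup\{x\}$, contradicting $x\notin\cl(\sigma)$. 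But as submitted, the decisive step is missing.

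For comparison, the paper's proof of the flat part sidesteps all closure calculus. Given independent $\tau\subseteq\cl(\sigma)$ and $x\notin\cl(\sigma)$, it first notes $\sigma\cup\{x\}\in I(M)$ (the same finiteness-of-circuits observation you make), then applies (I2) repeatedly to enlarge $\tau$ inside $\sigma\cup\{x\}$ to an independent set of size $|\sigma|+1$; by the rank computation this enlargement cannot stay inside $\cl(\sigma)$, so it must use $x$, and $\tau\cup\{x\}\in I(M)$ follows by (I1). No maximal $\rho$, no auxiliary finite matroid, and no transfer of circuits is needed, which is why that route avoids the difficulty your sketch runs into.
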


\begin{proof}
Let $\sigma\in I(M)$ be a finite independent set.
We prove first that $\rk(\cl(\sigma)) = |\sigma|$.
Suppose $\tau\subseteq \cl(\sigma)$ is a finite independent set of cardinality strictly larger than $\sigma$.
By (I2), we can suppose that $\tau\supsetneq \sigma$.
Thus we have $x\in \tau\setminus \sigma$ such that $\sigma \cup \{x\}$ is independent.
But by definition there is a circuit $C$ such that $x\in C\subseteq \sigma\cup \{x\}$, a contradiction.
This shows that $\rk(\cl(\sigma)) = |\sigma|$.

Next, we prove that $\cl(\sigma)$ is a flat.
Suppose $\tau\subseteq \cl(\sigma)$ is independent and let $x\in M\setminus \cl(\sigma)$.
We must show that $\tau\cup \{x\}$ is independent.
Since $x$ lies outside the closure of $\sigma$, we must have $\sigma\cup\{x\}\in I(M)$.
Also, $|\tau|\leq |\sigma| < |\sigma| + 1$ by the first part.
Hence, we can take $\tau'\subseteq (\sigma \setminus \tau) \cup \{x\}$ such that $|\tau\cup\tau'| = |\sigma|+1$.
Note that we cannot have $\tau'\subseteq \sigma$, otherwise $\tau\cup\tau'$ would be independent in $\cl(\sigma)$ of cardinality strictly larger than $|\sigma|$.
Therefore, $x\in \tau'$, so $\tau\cup\{x\}$ is independent.
\end{proof}

\begin{lemma}
\label{lm:flatContainsClosure}
Let $(M,I(M))$ be a pi-space, and $F\subseteq M$ a flat.
\begin{enumerate}
    \item If $X\subseteq F$, then $F$ contains $\cl(X)$.
    \item If $\sigma\in I(M,F)$ and $\rk_{I(M)}(F) = |\sigma|<\infty$, then $F = \cl(\sigma)$.
\end{enumerate}
\end{lemma}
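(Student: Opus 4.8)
\textbf{Part (1).} The plan is to verify this directly from the definition of closure. Suppose $x \in \cl(X) \setminus X$; we must show $x \in F$. By definition there is a circuit $C$ with $x \in C \subseteq X \cup \{x\}$. Since $X \subseteq F$, we have $C \subseteq F \cup \{x\}$. If $x \notin F$, then $C \setminus \{x\} = C \cap F$ is an independent subset of $F$ (being a proper subset of the circuit $C$), and appending $x \in M \setminus F$ to it should yield an independent set because $F$ is a flat --- but $C$ itself is dependent, a contradiction. Hence $x \in F$, so $\cl(X) \subseteq F$. This is essentially the forward direction of \ref{lm:flatClosure} applied to the subset $X$ rather than to $F$ itself, and it is short.

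\textbf{Part (2).} Here we are given a finite basis $\sigma$ of the flat $F$ (i.e.\ $\sigma \in I(M,F)$ with $|\sigma| = \rk_{I(M)}(F) < \infty$) and want $F = \cl(\sigma)$. By part (1), since $\sigma \subseteq F$, we get $\cl(\sigma) \subseteq F$, so it remains to prove $F \subseteq \cl(\sigma)$. Take $x \in F$; if $x \in \sigma$ we are done, so assume $x \in F \setminus \sigma$. The key point is that $\sigma \cup \{x\} \subseteq F$ has cardinality $|\sigma| + 1 > \rk_{I(M)}(F)$, so $\sigma \cup \{x\}$ is dependent. Now I would like to extract from this a circuit through $x$ witnessing $x \in \cl(\sigma)$; the natural argument: consider $\sigma \cup \{x\}$, which is a dependent set with the property that $\sigma$ is independent. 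I claim $\sigma \cup \{x\}$ is in fact \emph{itself} a circuit, i.e.\ a minimal dependent set: every proper subset either misses $x$ (hence is a subset of $\sigma$, independent) or is of the form $\sigma' \cup \{x\}$ with $\sigma' \subsetneq \sigma$, and one shows $\sigma' \cup \{x\}$ is independent because otherwise it would be a dependent subset of $F$ of size $\le |\sigma|$, yet any independent set it contains has size $< |\sigma'| + 1 \le |\sigma| = \rk(F)$ --- wait, that does not immediately give a contradiction. So the cleaner route is: among all dependent subsets of $\sigma \cup \{x\}$ containing $x$, pick a minimal one $C$; this exists since $\sigma\cup\{x\}$ is a finite dependent set containing $x$. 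Then $C$ is a circuit (minimality, plus $x \in C$ forced since $C \setminus \{x\}$ would otherwise be dependent and contained in $\sigma$), and $x \in C \subseteq \sigma \cup \{x\}$, so $x \in \cl(\sigma)$ by definition. Hence $F \subseteq \cl(\sigma)$, giving equality.

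\textbf{Main obstacle.} The only subtle point is ensuring that a circuit through $x$ genuinely exists inside $\sigma \cup \{x\}$: in a general pi-space, dependent sets need not contain circuits (Example~\ref{ex:finiteSubsetsIndependent}). The saving fact is that $\sigma \cup \{x\}$ is \emph{finite}, so among its finitely many dependent subsets one may choose a minimal one, which is automatically a circuit; and since $\sigma$ is independent, any dependent subset --- in particular a minimal one --- must contain $x$. So finiteness of $\sigma$ is exactly what makes the argument go through, and no appeal to the finitary (FC) axiom is needed. I expect this to be a short proof once that observation is in place; the rest is bookkeeping with ranks and \ref{lm:finiteIndFlatProperties}.
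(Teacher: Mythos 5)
Your proof is correct, but for part (2) it takes a slightly different route than the paper. For part (1) the paper simply combines monotonicity of the closure operator with \ref{lm:flatClosure} ($X\subseteq F$ gives $\cl(X)\subseteq\cl(F)=F$); your direct circuit argument is just that lemma's forward direction unrolled, so the content is the same. For part (2) the paper invokes \ref{lm:finiteIndFlatProperties}: $\cl(\sigma)$ is a flat, so any $x\in F\setminus\cl(\sigma)$ would force $\sigma\cup\{x\}$ to be \emph{independent} inside $F$, exceeding $\rk_{I(M)}(F)$ --- a contradiction. You instead argue in the opposite direction: $\sigma\cup\{x\}$ is \emph{dependent} by the rank bound, and since it is finite you extract a minimal dependent subset, which is a circuit necessarily containing $x$ (any subset missing $x$ sits inside the independent set $\sigma$), whence $x\in\cl(\sigma)$ by definition. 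Your key observation --- that finite dependent sets always contain circuits even though arbitrary dependent sets in a pi-space need not (cf.\ \ref{ex:finiteSubsetsIndependent}) --- is exactly what makes this work, and it renders your proof of (2) independent of \ref{lm:finiteIndFlatProperties}, so it is marginally more self-contained than the paper's; the paper's version is shorter because that lemma has already been established. Both arguments ultimately rest on the same rank count $|\sigma|+1>\rk_{I(M)}(F)$, and your incidental remark that the rest is ``bookkeeping with \ref{lm:finiteIndFlatProperties}'' is in fact unnecessary for your own argument.
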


\begin{proof}
Item (1) follows from \ref{lm:flatClosure} since $\cl_{I(M)}(X)\subseteq \cl_{I(M)}(F) = F$.
% Suppose $x\in \cl(\sigma)\setminus F$.
% Then $x\notin\sigma$, and there is a circuit $C$ such that $x\in C\subseteq \sigma \cup \{x\}$, so $\sigma\cup \{x\}$ cannot be independent.
% As $F$ is a flat and $\sigma\subseteq F$, this is a contradiction.

For item (2), suppose that $F$ is a flat of finite rank, and $\sigma\in I(M,F)$ has rank $|\sigma| = \rk_{I(M)}(F)$.
As $\cl_{I(M)}(\sigma)$ is a flat by \ref{lm:finiteIndFlatProperties}, if $x\in F\setminus \cl_{I(M)}(\sigma)$, then $\sigma\cup \{x\}$ is an independent set contained in $F$ with rank strictly larger than the rank of $F$ given by $|\sigma| = \rk_{I(M)}(F)$, a contradiction.
Thus, $F = \cl_{I(M)}(\sigma)$.
\end{proof}

\begin{corollary}
\label{coro:finiteRankFlats}
Let $(M,I(M))$ be a pi-space.
Then finite rank flats are exactly the sets $\cl(\sigma)$, with $\sigma\in I(M)$ finite.
\end{corollary}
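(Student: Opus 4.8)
The plan is to prove the two set inclusions separately, each being a direct consequence of a lemma already established.

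First, I would check that every set of the form $\cl(\sigma)$ with $\sigma \in I(M)$ finite is a flat of finite rank. This is exactly the content of \ref{lm:finiteIndFlatProperties}: that lemma asserts both that $\cl(\sigma)$ is a flat and that $\rk(\cl(\sigma)) = |\sigma|$, which is finite since $\sigma$ is. So no new argument is needed for this direction.

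For the converse inclusion, let $F$ be a flat with $\rk_{I(M)}(F) = n < \infty$. By definition, the rank of $F$ is the supremum of the cardinalities of the independent sets contained in $F$. Since this supremum is a finite nonnegative integer and the set of such cardinalities is nonempty (it contains $0$, as $\emptyset \in I(M,F)$), the supremum is attained; hence there is a finite independent set $\sigma \in I(M,F)$ with $|\sigma| = n = \rk_{I(M)}(F)$. Applying \ref{lm:flatContainsClosure}(2) to this $\sigma$ gives $F = \cl(\sigma)$, so $F$ has the desired form, which completes both inclusions.

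I do not expect a genuine obstacle here. The only point that requires a word of justification is the claim that the supremum defining the rank of a finite-rank flat is actually attained, and that holds simply because a nonempty set of nonnegative integers bounded above has a maximum. The corollary is thus a straightforward repackaging of \ref{lm:finiteIndFlatProperties} together with \ref{lm:flatContainsClosure}(2).
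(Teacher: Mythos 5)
Your proof is correct and matches the paper's argument, which likewise derives the corollary directly from \ref{lm:finiteIndFlatProperties} (one inclusion) and \ref{lm:flatContainsClosure}(2) (the other). The extra remark that the supremum defining the rank of a finite-rank flat is attained is a fine, if routine, point of care that the paper leaves implicit.
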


\begin{proof}
This follows from \ref{lm:finiteIndFlatProperties} and \ref{lm:flatContainsClosure}.
\end{proof}

Indeed, we can map any independent set $\sigma$ to the smallest flat containing it.

\begin{corollary}
\label{coro:PhiMap}
Let $(M,I(M))$ be a pi-space and $\sigma\in I(M)$.
Then there exists a unique minimal flat $\Phi(\sigma)$ containing $\sigma$ (and hence $\cl(\sigma)$), and $\Phi:I(M)\to \L(M,I(M))$ gives rise to an order-preserving map.
Moreover, $\rk(\Phi(\sigma))\geq |\sigma|$, and if $\sigma$ is finite then $\Phi(\sigma) = \cl(\sigma)$, which has rank $|\sigma|$.
\end{corollary}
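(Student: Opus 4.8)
The plan is to define $\Phi(\sigma)$ as the intersection of all flats containing $\sigma$ and then read off each assertion from the lemmas already established. Since $M$ itself is a flat containing $\sigma$, the family $\{F\in\L(M,I(M))\tq \sigma\subseteq F\}$ is non-empty, so we may set $\Phi(\sigma) := \bigcap_{\sigma\subseteq F} F$. By \ref{lm:intersectionFlats} this intersection is again a flat; it contains $\sigma$ and is contained in every flat containing $\sigma$, so it is the unique minimal such flat. Applying \ref{lm:flatContainsClosure}(1) to the flat $\Phi(\sigma)$ and the subset $\sigma\subseteq\Phi(\sigma)$ gives $\cl(\sigma)\subseteq\Phi(\sigma)$, which is the parenthetical claim.

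For monotonicity, if $\sigma\subseteq\tau$ are independent sets, then every flat containing $\tau$ also contains $\sigma$, so the family defining $\Phi(\tau)$ is a subfamily of the one defining $\Phi(\sigma)$; hence $\Phi(\sigma)\subseteq\Phi(\tau)$, i.e.\ $\Phi$ is order-preserving. The inequality $\rk(\Phi(\sigma))\geq|\sigma|$ is immediate from the definition of the rank of a subset, since $\sigma$ is an independent set contained in $\Phi(\sigma)$.

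Finally, suppose $\sigma$ is finite. By \ref{lm:finiteIndFlatProperties}, $\cl(\sigma)$ is a flat, and it obviously contains $\sigma$; minimality of $\Phi(\sigma)$ then forces $\Phi(\sigma)\subseteq\cl(\sigma)$. Combined with the reverse inclusion $\cl(\sigma)\subseteq\Phi(\sigma)$ obtained above, we get $\Phi(\sigma)=\cl(\sigma)$, and \ref{lm:finiteIndFlatProperties} also records that this flat has rank $|\sigma|$. (Alternatively, this identification is precisely the combination of \ref{coro:finiteRankFlats} and \ref{lm:flatContainsClosure}(2).)

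As for the point requiring care: there is no genuine obstacle here, the corollary being an assembly of \ref{lm:intersectionFlats}, \ref{lm:finiteIndFlatProperties}, and \ref{lm:flatContainsClosure}. The only step that is not purely formal is the inclusion $\cl(\sigma)\subseteq\Phi(\sigma)$, which rests on \ref{lm:flatClosure} (via \ref{lm:flatContainsClosure}(1)), namely that the closure operator never escapes a flat. The finiteness hypothesis in the last clause is essential: $\cl(\sigma)$ need not be a flat — not even idempotently closed — when $\sigma$ is infinite (see \ref{ex:infiniteAscChainNonClosed}), so for infinite $\sigma$ one cannot expect $\Phi(\sigma)=\cl(\sigma)$, only $\cl(\sigma)\subseteq\Phi(\sigma)$.
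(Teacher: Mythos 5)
Your proposal is correct and follows essentially the same route as the paper: define $\Phi(\sigma)$ as the intersection of all flats containing $\sigma$ (non-empty since $M$ is a flat), invoke \ref{lm:intersectionFlats} for flatness, \ref{lm:flatContainsClosure} for $\cl(\sigma)\subseteq\Phi(\sigma)$, and \ref{lm:finiteIndFlatProperties} for the finite case. Your explicit minimality argument giving $\Phi(\sigma)=\cl(\sigma)$ when $\sigma$ is finite only spells out what the paper leaves implicit.
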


\begin{proof}
Let $X_{\sigma}$ be the set of flats of $M$ containing $\sigma$.
Since $M\in X_{\sigma}$, this is a non-empty set.
Thus, by \ref{lm:intersectionFlats}, $F_{\sigma} = \bigcap_{F \in X_{\sigma}} F$ is the minimal flat containing $\sigma$.
Thus $\Phi(\sigma) = F_{\sigma}$, which clearly defines an order-preserving map.
By \ref{lm:flatContainsClosure}, $\cl(\sigma)\subseteq \Phi(\sigma)$.
It is also clear that $\rk(\Phi(\sigma)) \geq |\sigma|$.
The second part of the Moreover part follows from \ref{lm:finiteIndFlatProperties}.
\end{proof}

Now we can prove the first part of \ref{thm:homotopyPosetOfFlats}.

\begin{theorem}
\label{thm:homotopyTypeFlatsFiniteRank}
Let $(M,I(M))$ be a pi-space of finite rank.
Then $\L(M,I(M))$ is Cohen-Macaulay of dimension $\rk_{I(M)}(M)$.
\end{theorem}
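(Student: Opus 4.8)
The statement is that for a finite-rank pi-space $(M,I(M))$, the bounded lattice $\L(M,I(M))$ is Cohen-Macaulay of dimension $\rk_{I(M)}(M)$. Since a finite-rank pi-space is a finitary matroid (indeed an ordinary matroid-like structure on a possibly infinite ground set), the plan is to reduce to the classical fact that the lattice of flats of a finite matroid is Cohen-Macaulay, using the approximation-by-finite-submatroids technique already used in the proof of \ref{thm:homotopyIndependentSetsPoset}. The key tools in hand are: \ref{lm:lower_interval_flats} and \ref{lm:upper_interval_flats}, which identify lower and upper intervals of $\L(M,I(M))$ with lattices of flats of restrictions and contractions (themselves finite-rank pi-spaces); \ref{coro:finiteRankFlats}, which says every flat here is $\cl(\sigma)$ for a finite independent set $\sigma$; and \ref{coro:PhiMap}.

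First I would set $n = \rk_{I(M)}(M)$ and observe via \ref{prop:geometric} (or directly) that every maximal chain in $\red{\L(M,I(M))}$ between $0_{I(M)}$ and $M$ has length $n+1$, so the order complex of the proper part is pure of dimension $n-1$; hence $\L(M,I(M))$ has dimension $n$ as a bounded poset. By the recursive characterization of Cohen-Macaulayness (a bounded poset is CM of dimension $n$ iff its proper part is $(n-2)$-connected and all lower and upper intervals of proper elements are CM of the appropriate dimension — \ref{lm:lower_interval_flats} and \ref{lm:upper_interval_flats} reduce the interval conditions to the same statement for smaller-rank pi-spaces), it suffices by induction on $n$ to show that $\red{\L(M,I(M))}$ is $(n-2)$-connected. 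So the core task is a sphericity/high-connectivity statement for the proper part of the lattice of flats.

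To prove $(n-2)$-connectivity of $\red{\L(M,I(M))}$, I would run the compactness argument. Given a continuous map $f\co S^k \to |\red{\L(M,I(M))}|$ with $k \le n-2$, its image meets only finitely many open cells, hence touches only finitely many flats $F_1,\dots,F_m$; by \ref{coro:finiteRankFlats} each $F_j = \cl(\sigma_j)$ for a finite independent $\sigma_j$. Let $N_0 = \bigcup_j \sigma_j \subseteq M$, a finite set. The point is that $f$ factors through the subposet of $\red{\L(M,I(M))}$ consisting of flats contained in $\cl(N_0)$ — but $\cl(N_0)$ is a finite-rank flat, so by \ref{lm:lower_interval_flats} this subposet is $\red{\L(\cl(N_0), I(M,\cl(N_0)))}$, the proper part of the lattice of flats of a \emph{finite matroid} (the ground set $\cl(N_0)$ has finite rank, but could be infinite as a set; however one can further restrict to a finite set $N$ with $\cl_{I(M,N)}(\cdot)$ realizing all the relevant flats — more carefully, replace $N_0$ by a finite $N$ so that the flats $F_j \cap N$ are distinct and $|I(M,N)|$ has the same rank as $\cl(N_0)$, mirroring the "enlarge $N$" step in the proof of \ref{thm:homotopyIndependentSetsPoset}). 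Then $f$ factors through $|\red{\L(N, I(M,N))}|$, which by the classical result on geometric lattices is Cohen-Macaulay, hence $(\rk(N)-2)$-connected; arranging $\rk(N) > k+1$ (again by enlarging $N$, adding vertices to push up the rank, since $k \le n-2 < n = \rk(M)$) makes $f$ null-homotopic.

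The main obstacle I anticipate is the bookkeeping in the reduction from the flat $\cl(N_0)$ (finite rank but possibly infinite as a set) to a genuinely finite matroid $(N, I(M,N))$, and checking that the inclusion of posets $\red{\L(N,I(M,N))} \hookrightarrow \red{\L(\cl(N_0), I(M,\cl(N_0)))} \hookrightarrow \red{\L(M,I(M))}$ is set up so that $f$ really does factor through it — one must verify that every flat of $M$ that $f$'s image touches is of the form (flat of $N$)'s closure in $M$, and that enlarging $N$ only enlarges the image subcomplex. This is exactly parallel to, but slightly more delicate than, the argument for $I(M)$, because here the "cells" are flats rather than independent sets and the correspondence between flats of a restriction and flats of the ambient space needs \ref{lm:lower_interval_flats} together with \ref{coro:finiteRankFlats}/\ref{coro:PhiMap}. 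Once that factorization is clean, the rest is the standard matroid/geometric-lattice input plus the recursive (link) argument for Cohen-Macaulayness, which \ref{lm:lower_interval_flats} and \ref{lm:upper_interval_flats} make routine.
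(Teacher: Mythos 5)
Your proposal is correct in outline, but it takes a genuinely different route from the paper at the key connectivity step. The paper handles the proper intervals exactly as you do (induction on rank via \ref{lm:lower_interval_flats} and \ref{lm:upper_interval_flats}), but for the sphericity of $\red{\L(M,I(M))}$ it does not approximate by finite submatroids: it uses the map $\Phi\co I(M)^{(n-2)}\to \red{\L(M,I(M))}$, $\sigma\mapsto\cl(\sigma)$, of \ref{coro:PhiMap}, identifies the fibers $\Phi^{-1}\bigl(\red{\L(M,I(M))}_{\subseteq F}\bigr)=I(M,F)$, which are Cohen-Macaulay by \ref{thm:homotopyIndependentSetsPoset}, joins them with the upper intervals $\red{\L(M,I(M))}_{\supset F}$ (spherical by induction via \ref{lm:upper_interval_flats}), and applies the $m$-equivalence form of Quillen's fiber theorem, \ref{thm:quillen}(1), to transfer the known high connectivity of $I(M)^{(n-2)}$ to $\red{\L(M,I(M))}$. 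Your compactness reduction to a finite restriction $(N,I(M,N))$ is instead the exact analogue of the paper's proof of \ref{thm:homotopyIndependentSetsPoset}, now importing the classical CM-ness of geometric lattices of finite matroids; and the bookkeeping you flag can indeed be completed: for a touched flat $F=\cl(\sigma)$ with $\sigma\subseteq N$, set $G:=F\cap N$; then $G$ is a flat of $(N,I(M,N))$ with $\cl_M(G)=F$ (using \ref{lm:flatClosure}), so $F\mapsto F\cap N$ and $G\mapsto\cl_M(G)$ are inverse order isomorphisms between the touched flats and proper flats of $N$ once $\rk_{I(M)}(N)>\max_j\rk_{I(M)}(F_j)$ and each $F_j$ has positive rank. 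What the paper's route buys is that this restriction/closure correspondence and the appeal to shellability of finite geometric lattices are replaced by one application of Quillen's theorem on top of the already-proved \ref{thm:homotopyIndependentSetsPoset}; what your route buys is that it avoids the fiber/join computation and leans only on the finite theory.

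Two index corrections are needed in your sketch: sphericity of the $(n-2)$-dimensional proper part only requires killing $\pi_k$ for $k\le n-3$ (you allow $k\le n-2$, but $\pi_{n-2}$ is in general nonzero), and since the proper part of the lattice of flats of a rank-$r$ finite matroid is a wedge of $(r-2)$-spheres, null-homotoping a map from $S^k$ needs $\rk_{I(M)}(N)\ge k+3$, not $\rk_{I(M)}(N)>k+1$. With the correct range $k\le n-3$ this is attainable, since $M$ contains an independent set of size $n$ and one may add it to $N$.
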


\begin{proof}
We must check that every interval is spherical.
For proper intervals, this follows by induction on the rank and \ref{lm:lower_interval_flats} and \ref{lm:upper_interval_flats}.
Thus, it remains to prove that the proper part of $\L(M) := \L(M,I(M))$ is spherical.

Let $n$ be the rank of $M$ and $\red{\L(M)} = \L(M) \setminus \{0_{I(M)}, M\}$.
Consider the $(n-2)$-skeleton of $I(M)$ and the ``spanning map" $\Phi:I(M)^{(n-2)} \to \red{\L(M)}$ that sends an independent set $\sigma$ to the smallest flat containing it (see \ref{coro:PhiMap}).
As $M$ has finite rank, this map is just $\Phi(\sigma) = \cl(\sigma)$, and $\rk(\Phi(\sigma)) = |\sigma|$.
Now $\Phi^{-1}(\red{\L(M)}_{\subseteq F}) = I(M, F)$, which is Cohen-Macaulay of dimension $\rk(F)-1$ by \ref{thm:homotopyIndependentSetsPoset} applied to the pi-space $(F,I(M,F))$.
Also, $\red{\L(M)}_{\supset F}$ is a proper interval, so it is spherical of dimension $n - \rk(F) - 2$ by \ref{lm:upper_interval_flats} and induction.
Hence, $\Phi^{-1}(\red{\L(M)}_{\subseteq F}) * \red{\L(M)}_{\supset F}$ is $(n-3)$-connected, and by Quillen's fiber \ref{thm:quillen}, we conclude that $\Phi$ is an $(n-2)$-equivalence.
Finally, since $I(M)^{(n-2)}$ is $(n-3)$-connected, we conclude that $\red{\L(M)}$ is $(n-3)$-connected and hence spherical of dimension $n-2$.
This concludes with the proof of the theorem.
\end{proof}

Notice that we have used the same map $\Phi$ as described in \cite[Lemma 3.8]{PW} when restricted to ``partial bases" (see \cite[Definition 4.4]{PW}), which are interpreted as independent sets in our context.

For the infinite-rank case of \ref{thm:homotopyPosetOfFlats}, we will need the analogue of \ref{prop:finReduction}.
We denote by $\redm{\L(M,I(M))_{\fin}}$ the subposet of non-zero finite rank flats of $(M,I(M))$.
More generally, if $\P$ is a poset with a unique minimal element $0_{\P}$, then $\redm{\P}$ denotes the subposet $\P\setminus \{ 0_{\P}\}$.
Note $\redm{\L(M,I(M))_{\fin}} = \L(M,I(M)) \setminus \{0_{I(M)}\}$ if $M$ has finite rank.
Since this is a contractible poset (with maximal element $M$), and the homotopy type of its proper part (that is, after removing $M$) is described in \ref{thm:homotopyTypeFlatsFiniteRank}, we will focus on the case where $(M,I(M))$ has infinite rank.
We will show that, in such a case, the full poset $\red{\L(M,I(M))}$ retracts onto $\redm{\L(M,I(M))_{\fin}}$, and the latter is contractible.

We introduce the following useful language:

\begin{definition}
Let $(M,I(M))$ be a pi-space.
A subset $X\subseteq M$ is termed \textit{finitely closed} if $\cl(\sigma)\subseteq X$ for any independent set of finite rank $\sigma$ contained in $X$.
\end{definition}

Note by \ref{lm:flatContainsClosure} that every flat is finitely closed.

The following result will allow us to conclude when certain closures of sets are flats, and that the rank does not change.

\begin{lemma}
\label{lm:finiteRankSets}
Let $(M,I(M))$ be a pi-space.
Then the following hold:
\begin{enumerate}
    \item $\rk_{I(M)}(\cl(X)) = \rk_{I(M)}(X)$ for any $X\subseteq M$.
    \item If $X\subseteq M$ has finite rank, then $\cl(X)$ is a flat, and so $\cl(\cl(X)) = \cl(X) = \cl(\sigma)$ for any $\sigma\in I(M,X)$ of rank $\rk_{I(M)}(X)$.
    In particular, if $X$ is finitely closed of finite rank, then it is a flat.
    \item If every dependent set of $M$ contains a circuit, then $\cl(X)$ is a flat for any $X\subseteq M$.
\end{enumerate}
\end{lemma}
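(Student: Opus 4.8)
The plan is to handle the three items in order; (1) and (2) will come out of one computation, and (3) is where the real work lies.

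For Part (1): since $X\subseteq\cl(X)$ gives $\rk(X)\le\rk(\cl(X))$ for free, I would reduce to proving $\rk(\cl(X))\le\rk(X)$ and assume $r:=\rk(X)<\infty$; pick an independent $\sigma\subseteq X$ with $|\sigma|=r$. The key claim is $X\subseteq\cl(\sigma)$: for $x\in X\setminus\sigma$ the set $\sigma\cup\{x\}\subseteq X$ is finite and dependent (it exceeds $\rk(X)$ in size), so a $\subseteq$-minimal dependent subset $C$ of $\sigma\cup\{x\}$ is a circuit of $M$, and $C$ must contain $x$ because $\sigma$ is independent; thus $x\in\cl(\sigma)$. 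Now $\cl(\sigma)$ is a flat of rank $|\sigma|=r$ by \ref{lm:finiteIndFlatProperties}, so monotonicity of $\cl$ and \ref{lm:flatClosure} give $\cl(X)\subseteq\cl(\cl(\sigma))=\cl(\sigma)$, whence $\rk(\cl(X))\le r$. Part (2) then falls out: the same computation gives $\cl(X)=\cl(\sigma)$ for any independent $\sigma\subseteq X$ of rank $\rk(X)$ (one inclusion is the claim above, the other is $\sigma\subseteq X$ plus monotonicity); this set is a flat by \ref{lm:finiteIndFlatProperties}, so $\cl(\cl(X))=\cl(X)$ by \ref{lm:flatClosure}; and if $X$ is moreover finitely closed, then finite-closedness forces $\cl(\sigma)\subseteq X$, so $X=\cl(\sigma)$ is a flat.

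For Part (3) I would argue directly. Suppose $\cl(X)$ is not a flat: there are $y\notin\cl(X)$ and an independent $\tau\subseteq\cl(X)$ with $\tau\cup\{y\}$ dependent. By hypothesis $\tau\cup\{y\}$ contains a circuit $C$, and $y\in C$ since $\tau$ is independent, so $C\subseteq\cl(X)\cup\{y\}$. For each $z$ in the ``bad set'' $Z:=(C\setminus\{y\})\cap(\cl(X)\setminus X)$ choose a circuit $C_z$ with $z\in C_z\subseteq X\cup\{z\}$, and put
\[
Y:=\big((C\setminus\{y\})\cap X\big)\cup\bigcup_{z\in Z}\big(C_z\setminus\{z\}\big)\ \subseteq\ X .
\]
Every element of $C\setminus\{y\}$ then lies in $\cl(Y)$ (those in $X$ trivially, and each $z\in Z$ because $z\in C_z\subseteq(C_z\setminus\{z\})\cup\{z\}\subseteq Y\cup\{z\}$), so $C\subseteq\cl(Y)\cup\{y\}$ and hence $y\in\cl(\cl(Y))$. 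If $Y$ has finite rank, Part (2) says $\cl(Y)$ is a flat, so $y\in\cl(\cl(Y))=\cl(Y)\subseteq\cl(X)$ (using monotonicity of $\cl$ and $Y\subseteq X$), contradicting $y\notin\cl(X)$.

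The hard part will be exactly this last step. In general $Z$ and the circuits $C,C_z$ can be infinite, so $Y$ need not have finite rank and Part (2) does not apply directly; one must then use the hypothesis ``every dependent set contains a circuit'' — rather than merely the circuit-based definition of $\cl$ — more forcefully, e.g.\ by choosing the circuit $C$ minimally or by iterating the replacement $X\rightsquigarrow Y$, to certify that $y$ lies in the closure of some set whose closure is a flat. Organizing this bookkeeping, which in effect amounts to proving idempotence of $\cl$ in this generality, is the main obstacle; once $\cl$ is known to be idempotent, the passage back to ``$\cl(X)$ is a flat'' is immediate from the argument in \ref{lm:flatClosure}.
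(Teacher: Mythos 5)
Your items (1) and (2) are correct. The route you take -- showing $X\subseteq\cl(\sigma)$ for an independent $\sigma\subseteq X$ of maximal size via a minimal dependent subset of the finite set $\sigma\cup\{x\}$, and then $\cl(X)=\cl(\sigma)$ from monotonicity of $\cl$ together with \ref{lm:finiteIndFlatProperties} and \ref{lm:flatClosure} -- delivers (1), (2) and the ``finitely closed'' addendum in one stroke. The paper instead gets (1) from an exchange-property computation inside $\cl(X)$ and then deduces (2), but the ingredients are the same and your packaging is sound.

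Item (3) is where the proposal has a genuine gap, and you flag it yourself: your argument only closes when the auxiliary set $Y$ has finite rank, and you leave the general case (idempotence of $\cl$) unresolved. This is not bookkeeping that more care would fix: the hypothesis ``every dependent set contains a circuit'' does not imply that $\cl$ is idempotent, nor that $\cl(X)$ is a flat. The paper's own \ref{ex:infiniteAscChainNonClosed} witnesses this: there the dependent sets are exactly the sets containing some $P_n$, and the $P_n$ are circuits, so the hypothesis of (3) holds; yet $\cl(X_1)=X_1\cup\{b_1\}=P_1$ is not a flat, because $b_2\notin P_1$, the set $P_1\setminus\{a_2\}$ is an independent subset of $P_1$, and $(P_1\setminus\{a_2\})\cup\{b_2\}=P_2$ is dependent. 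So item (3) as stated conflicts with that example, and the step you could not supply genuinely fails in this generality. What your $Y$-construction does prove is (3) under the extra assumption that circuits are finite (then $Y$ is a finite subset of $X$ and item (2) finishes the argument); together with the stated hypothesis that assumption is precisely the finitary condition (FC), cf.\ \ref{lm:finitaryAndCircuits}. For comparison, the paper's own proof of (3) consists of the observation that $\sigma\cup\{x\}$ contains no circuit when $\sigma\in I(M,X)$ and $x\notin\cl(X)$, followed by ``it is not hard to see that $\cl(X)$ is a flat''; that observation only controls independent subsets of $X$, not of $\cl(X)$, i.e.\ it silently skips exactly the step you identified as the obstacle.
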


\begin{proof}
Let $X\subseteq M$ be arbitrary, and let $x\in M\setminus \cl(X)$ and $\sigma\in I(M,X)$ an independent set.
Then $\sigma \cup \{x\}$ cannot contain a circuit, as otherwise this would imply that $x\in \cl(X)$.
From this, it is not hard to see that $\cl(X)$ is a flat if either $X$ has finite rank or every dependent set of $M$ contains a circuit (so item (3) holds).
Note that this, together with item (1) and \ref{lm:flatContainsClosure}, also proves item (2).

Finally, for item (1), note that we always have $\rk_{I(M)}(\cl(X)) \geq \rk_{I(M)}(X)$, with equality if $X$ has infinite rank.
Hence, assume that $X$ has finite rank, say $m$, and that $\cl(X)$ contains an independent set $\sigma$ of rank $m+1$.
By taking an independent set in $X$ of maximal rank $m$ and invoking the exchange property, we can suppose that $\sigma\cap X$ has size $m$.
But then we have that $\sigma\setminus X = \{x\}$, that is, $x\in \cl(X)\setminus X$.
Now, there is a circuit $C$ such that $x\in C \subseteq X \cup \{x\}$.
Thus, $\tau = C\cap X$ has size at most $m$, and by the exchange property there is $\tau'\subseteq \sigma\setminus \tau$ such that $\tau\cup \tau'$ is an independent set of size $m+1$.
But then $x\in \tau'$, that is, $C \subseteq \tau\cup \tau'$ is an independent set, a contradiction.
This proves that $\cl(X)$ cannot contain independent sets of size $\geq m+1$, so item (1) holds.
\end{proof}

The following theorem contains the final ingredients to prove the infinite-rank case of \ref{thm:homotopyPosetOfFlats}.

\begin{theorem}
\label{thm:contractibleFlats}
Let $(M,I(M))$ be a pi-space.
\begin{enumerate}
    \item If $F,G \in \L(M,I(M))$ have finite rank, then $F\vee G$ is a finite rank flat.
    Moreover, if $\sigma\in I(M,F)$ and $\tau\in I(M,G)$ realize the rank of $F$ and $G$ respectively, then $F\vee G = \cl(\sigma\cup \tau)$.
    \item If $X \subseteq M$ is finitely closed of positive rank, then the subposet $\redm{\L(M,I(M))_{\fin \subseteq X}}$ of non-zero finite-rank flats contained in $X$ is contractible.
    \item Suppose that $\P$ is a subposet of $2^M$ such that:
    \begin{enumerate}
        \item $\{X\in \P \tq \rk_{I(M)}(X) < \infty\} =  \redm{\L(M,I(M))_{\fin}}$,
        \item Every element of $\P$ is finitely closed.
    \end{enumerate}
    Then $i: \redm{\L(M,I(M))_{\fin}} \hookrightarrow \P$ is a homotopy equivalence.
\end{enumerate}
In particular, $\redm{\L(M,I(M))_{\fin}}$ and $\P$ are contractible.
\end{theorem}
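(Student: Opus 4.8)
The plan is to prove the three items in the stated order, since (2) uses (1) and (3) uses (2), and then to read off the ``In particular'' clause from (2) (applied with $X=M$) together with (3). For part (1), I would pick finite independent sets $\sigma\subseteq F$ and $\tau\subseteq G$ with $|\sigma|=\rk(F)$ and $|\tau|=\rk(G)$; by \ref{lm:flatContainsClosure}(2) these satisfy $F=\cl(\sigma)$ and $G=\cl(\tau)$. Since $\sigma\cup\tau$ is finite, hence of finite rank, \ref{lm:finiteRankSets} gives that $\cl(\sigma\cup\tau)$ is a finite-rank flat. As $\cl$ is order-preserving, $F=\cl(\sigma)\subseteq\cl(\sigma\cup\tau)$ and likewise $G\subseteq\cl(\sigma\cup\tau)$, so $F\vee G\subseteq\cl(\sigma\cup\tau)$; in particular $F\vee G$ has finite rank. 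Conversely $\sigma\cup\tau\subseteq F\cup G\subseteq F\vee G$, and since $F\vee G$ is a flat, \ref{lm:flatContainsClosure}(1) yields $\cl(\sigma\cup\tau)\subseteq F\vee G$. Hence $F\vee G=\cl(\sigma\cup\tau)$ for every such $\sigma,\tau$, which is exactly (1) with its ``Moreover'' clause.

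For part (2) I would use a conical contraction. Since $X$ has positive rank there is $x\in X$ with $\{x\}\in I(M)$; set $p_0:=\cl(\{x\})$, which by \ref{lm:finiteIndFlatProperties} is a flat of rank $1$, hence non-zero, and which lies in $X$ because $\{x\}\subseteq X$ has finite rank and $X$ is finitely closed. Thus $p_0$ belongs to $P:=\redm{\L(M,I(M))_{\fin\subseteq X}}$, so $P\neq\emptyset$. Define $h\co P\to P$ by $h(F)=F\vee p_0$. This is order-preserving, and by part (1) $h(F)$ is a finite-rank flat equal to $\cl(\sigma\cup\{x\})$ for $\sigma\subseteq F$ realizing $\rk(F)$; since $\sigma\cup\{x\}\subseteq X$ has finite rank and $X$ is finitely closed, $h(F)\subseteq X$, and $h(F)\supseteq p_0$ is non-zero, so $h$ indeed maps $P$ into $P$. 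As $F\subseteq h(F)$ and $p_0\subseteq h(F)$ for all $F\in P$, both $\id_P$ and the constant map at $p_0$ are pointwise $\leq h$, hence induce maps on $|P|$ homotopic to the one induced by $h$; therefore $|P|$ is contractible.

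For part (3), the inclusion $i\co\redm{\L(M,I(M))_{\fin}}\hookrightarrow\P$ is well defined by hypothesis (a), and for $Y\in\P$ the fiber is $i^{-1}(\P_{\leq Y})=\redm{\L(M,I(M))_{\fin\subseteq Y}}$. Every $Y\in\P$ is finitely closed by (b), and has positive rank: this is immediate if $\rk(Y)=\infty$, while if $\rk(Y)<\infty$ then (a) makes $Y$ a non-zero flat, so $Y\supsetneq 0_{I(M)}$ (the unique minimal flat), whence $Y$ contains a non-loop and $\rk(Y)\geq1$. Hence part (2) applies to each $Y$ and all fibers of $i$ are contractible, so by Quillen's fiber \ref{thm:quillen} the map $i$ is a homotopy equivalence. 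For the ``In particular'' clause, applying (2) with $X=M$ — a flat, hence finitely closed, of positive rank — shows $\redm{\L(M,I(M))_{\fin}}=\redm{\L(M,I(M))_{\fin\subseteq M}}$ is contractible, and then $\P$ is contractible too via the homotopy equivalence just established.

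The step requiring the most care is verifying in part (2) that the join map $h$ genuinely takes values in $P$: that $F\vee p_0$ stays a flat of finite rank and, above all, stays contained in $X$. Because $X$ itself need not be a flat, this cannot be argued directly; one really needs both conclusions of part (1) — that the join of two finite-rank flats is again a finite-rank flat, and the explicit formula $F\vee p_0=\cl(\sigma\cup\{x\})$ — together with the hypothesis that $X$ is finitely closed. I expect this to be the main obstacle; the order-preservation of $h$, the non-zeroness of $h(F)$, and the positive-rank observation in (3) are routine.
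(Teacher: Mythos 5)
Your proof follows the paper's argument essentially step by step: part (1) via $F=\cl(\sigma)$, $G=\cl(\tau)$ and \ref{lm:finiteRankSets}(2); part (2) by joining with a fixed positive-rank flat inside $X$ and contracting through $F\subseteq F\vee p_0\supseteq p_0$ (the paper uses an arbitrary positive-rank flat $G$ in place of your rank-one $p_0=\cl(\{x\})$, which is immaterial); part (3) via Quillen's fiber \ref{thm:quillen} with the fibers supplied by (2). Your shortcut in (1), deducing $\cl(\sigma\cup\tau)\subseteq F\vee G$ directly from \ref{lm:flatContainsClosure}(1) applied to $\sigma\cup\tau\subseteq F\vee G$, is valid and replaces the paper's explicit circuit argument.

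The one step that is not justified as written is exactly the one you flagged as delicate: the containment $h(F)=F\vee p_0\subseteq X$. You obtain it by applying the hypothesis that $X$ is finitely closed to the set $\sigma\cup\{x\}$, but the definition of finitely closed only guarantees $\cl(\rho)\subseteq X$ for $\rho\subseteq X$ a finite \emph{independent} set, and $\sigma\cup\{x\}$ may be dependent (for instance when $x\in F$). The conclusion is true, and the repair is the paper's own maneuver: choose an independent $\sigma'\subseteq\sigma\cup\{x\}$ with $|\sigma'|=\rk(\sigma\cup\{x\})$; by \ref{lm:finiteRankSets}(1) this equals $\rk(\cl(\sigma\cup\{x\}))=\rk(F\vee p_0)$, so \ref{lm:flatContainsClosure}(2) gives $\cl(\sigma')=F\vee p_0$, and finite closedness applied to the independent set $\sigma'\subseteq X$ now yields $F\vee p_0\subseteq X$. (Alternatively, one can first show that finite closedness implies $\cl(Y)\subseteq X$ for every finite-rank $Y\subseteq X$, by replacing a circuit $C\subseteq Y\cup\{x\}$ with the finite independent set $C\cap Y$; some such bridge must be supplied.) With that line inserted, your proofs of (2), (3) and the ``In particular'' clause are complete and coincide with the paper's.
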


\begin{proof}
We first prove (1).
Suppose $F,G$ are flats of finite rank.
Recall from \ref{lm:intersectionFlats} that $F\vee G$ exists since $\L(M,I(M))$ is a lattice.
Let $\sigma,\tau\in I(M)$ be flats of maximal rank of $F$ and $G$ respectively.
Then, by \ref{lm:flatContainsClosure}, $F = \cl(\sigma)$ and $G = \cl(\tau)$.
Thus, $F,G\subseteq \cl(\sigma\cup \tau)$.
Since $\sigma\cup \tau$ is a finite set, it has finite rank, and therefore its closure is a flat of finite rank by \ref{lm:finiteRankSets}.
This implies that $F\vee G\subseteq \cl(\sigma\cup\tau)$, and $F\vee G$ has finite rank.

Next, we prove that $F\vee G$ contains $\cl(\sigma\cup\tau)$.
Suppose that $x\in \cl(\sigma\cup\tau) \setminus (F\vee G)$.
Then $x\notin \sigma\cup\tau$, so there is a circuit $C$ such that $x\in C\subseteq \sigma\cup\tau\cup \{x\}$.
Also, $C\cap (\sigma\cup \tau)$ is an independent set contained in $F\vee G$.
As $F\vee G$ is a flat and $x\notin F\vee G$, $C = C\cap (\sigma\cup \tau) \cup \{x\}$ must be an independent set, a contradiction.
Therefore, we must have $F\vee G \supseteq \cl(\sigma\cup \tau)$.
Together with the previous paragraph, this concludes with the proof of item (1).

Now we prove item (2). Let $X\subseteq M$ be a finitely closed subset of positive rank.
This means that $\cl(\sigma)\subseteq X$ for every finite independent set $\sigma\subseteq X$.
By \ref{coro:finiteRankFlats},
\[ \redm{\L(M,I(M))_{\fin \subseteq X}} = \{ \cl(\sigma) \tq \sigma\in I(M,X) \text{ is finite and non-empty}\}.\]
Since $X$ has positive rank, the previous poset contains a flat $G$ of positive rank.
By item (1), if $F\in \redm{\L(M,I(M))_{\fin \subseteq X}}$ then $F\vee G$ is a flat of finite rank which is equal to $\cl(\sigma\cup \tau)$ for $\sigma,\tau \in I(M,X)$ realizing the rank of $F$ and $G$ respectively.
By item (1) in \ref{lm:finiteRankSets}, there is an independent set $\sigma'\subseteq \sigma\cup \tau$ whose rank is $\rk(\sigma\cup \tau) = \rk(\cl(\sigma\cup\tau)) = \rk(F\vee G)$.
Since $X$ is finitely closed, $\cl(\sigma')\subseteq X$, and $\cl(\sigma') = F\vee G$ by \ref{lm:flatContainsClosure}.
Therefore, $F\vee G\subseteq X$.
We have shown then that we have a homotopy
\[ F \subseteq F\vee G \supseteq G\]
whose terms lie in $\redm{\L(M,I(M))_{\fin \subseteq X}}$.
That is, $\redm{\L(M,I(M))_{\fin \subseteq X}}$ is contractible.
This proves item (2) and that $\redm{\L(M,I(M))_{\fin}}$ is contractible by taking $X = M$.

Next, to show item (3), we invoke Quillen's fiber \ref{thm:quillen} for the inclusion $i$.
If $X\in \P$, then $X$ is finitely closed of positive rank, and
\[ i^{-1}( \P_{\subseteq X} ) = \redm{\L(M,I(M))_{\fin \subseteq X}}\]
is contractible by item (2).
By Quillen's fiber theorem, $i$ is a homotopy equivalence.
\end{proof}

\begin{proof}[Proof of \ref{thm:homotopyPosetOfFlats}]
We have that $\L(M,I(M))$ is a lattice by \ref{lm:intersectionFlats}, and it is Cohen-Macaulay when $M$ has finite rank by \ref{thm:homotopyTypeFlatsFiniteRank}.
If $M$ has infinite rank, then $\P = \red{\L(M,I(M))}$ is contractible by \ref{thm:contractibleFlats}.
\end{proof}

% \begin{corollary}
% Let $(M,I(M))$ be a finitary matroid of infinite rank.
% Then the upper intervals $\red{\L(M,I(M))}_{\supset F}$ are contractible for any flat $F\in \red{\L(M,I(M))}$.
% \end{corollary}

% \begin{proof}
% This follows from \ref{"}
% \end{proof}

We close this section with a relation between the lattice of flats of the associated pi-spaces $(M,I(M)_{\fin})$ and $(M,I(M)_W)$ of a given pi-space $(M,I(M))$.

\begin{corollary}
\label{coro:inclusionsFlats}
Let $(M,I(M))$ be a pi-space.
Then we have inclusions
\[ i: \L(M,I(M)_{\fin}) \hookrightarrow \L(M,I(M))\ \text{ and } \ j: \L(M,I(M)) \hookrightarrow \L(M,I(M)_W),\]
which induce homotopy equivalences between their proper parts.
\end{corollary}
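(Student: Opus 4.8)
The plan is to deduce the corollary from \ref{thm:contractibleFlats}(3). If $M$ has finite rank then $I(M)_{\fin}=I(M)=I(M)_W$ and the three posets of flats coincide, so there is nothing to prove; hence I may assume $\rk_{I(M)}(M)=\infty$. The first point to record is that every flat of $(M,I(M))$ is also a flat of $(M,I(M)_{\fin})$ and of $(M,I(M)_W)$, which is a routine verification from the definitions; thus the three proper parts
\[
\red{\L(M,I(M)_{\fin})},\qquad \red{\L(M,I(M))},\qquad \red{\L(M,I(M)_W)}
\]
are nested, with $\red{\L(M,I(M))}$ contained in each of the other two, and all three contain $\redm{\L(M,I(M))_{\fin}}$. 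The main work is to show that each of these three posets, viewed as a subposet of $2^M$, satisfies hypotheses (a) and (b) of \ref{thm:contractibleFlats}(3) relative to $(M,I(M))$. Granting this, \ref{thm:contractibleFlats}(3) gives that the inclusion $\redm{\L(M,I(M))_{\fin}}\hookrightarrow\P$ is a homotopy equivalence for each of the three choices of $\P$, whence, by the two-out-of-three property applied to the nesting, each inclusion between two of the three proper parts — in particular the maps induced by $i$ and $j$ — is a homotopy equivalence; in fact all three proper parts are then contractible.

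The step I expect to require the most care is hypothesis (a): that the non-zero finite-rank flats of $(M,I(M)_{\fin})$ and of $(M,I(M)_W)$ coincide with $\redm{\L(M,I(M))_{\fin}}$, the non-zero finite-rank flats of $(M,I(M))$. That having finite rank is the same condition for a subset of $M$ in all three pi-spaces is immediate from $I(M)_{\fin}\subseteq I(M)\subseteq I(M)_W$ and the fact that an infinite independent set contains finite independent sets of every smaller size. For the coincidence of the finite-rank flats themselves, the crucial observation is that a subset $F$ with $\rk_{I(M)}(F)<\infty$ contains only finite independent sets, so its flat condition tests only finite sets $\sigma$, and for a finite set $\sigma\cup\{x\}$ membership in $I(M)_{\fin}$, $I(M)$ and $I(M)_W$ are equivalent; hence the flat condition for such an $F$ is literally identical in the three pi-spaces. (Alternatively, using \ref{coro:finiteRankFlats}, it suffices to see that $\cl_{I(M)_{\fin}}(\sigma)=\cl_{I(M)}(\sigma)=\cl_{I(M)_W}(\sigma)$ for finite $\sigma$, which follows because any circuit entering such a closure lies in the finite set $\sigma\cup\{x\}$ and finite circuits agree across the three pi-spaces.)

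Hypothesis (b) — that every flat $F$ of $(M,I(M)_{\fin})$ or of $(M,I(M)_W)$ is finitely closed with respect to $(M,I(M))$ — I would check directly, mimicking the proof of \ref{lm:flatClosure}. Given a finite independent set $\sigma\subseteq F$ (a finite-rank independent set is finite) and an element $x\in\cl_{I(M)}(\sigma)\setminus\sigma$, pick a circuit $C$ of $(M,I(M))$ with $x\in C\subseteq\sigma\cup\{x\}$; then $C$ is finite, hence also a circuit of the relevant pi-space, while $C\setminus\{x\}\subseteq\sigma\subseteq F$ is independent there. If $x$ were not in $F$, the flat property of $F$ would force $C=(C\setminus\{x\})\cup\{x\}$ to be independent, contradicting that $C$ is a circuit; so $x\in F$, and hence $\cl_{I(M)}(\sigma)\subseteq F$. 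With (a) and (b) in hand for all three posets, the two-out-of-three argument of the first paragraph completes the proof.
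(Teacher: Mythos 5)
Your proof does not establish the statement as printed: the corollary asserts the containment $\L(M,I(M)_{\fin})\subseteq \L(M,I(M))$ (the map $i$), whereas you assert and use the reverse containment $\L(M,I(M))\subseteq \L(M,I(M)_{\fin})$, and you never address the stated direction. Your direction is the one that actually follows from the definitions (a flat of $(M,I(M))$ is tested against more independent sets, and for finite $\sigma$ the three notions of independence agree), while the stated direction can fail for a general pi-space: take $M$ infinite and $I(M)$ the set of proper subsets of $M$ (the pi-space $I_2(M)$ of \ref{ex:allSubsets}). Every subset of $M$ is a flat of $(M,I(M)_{\fin})$, since a finite set plus one point is finite, hence independent; but $F=M\setminus\{x\}$ is not a flat of $(M,I(M))$, because $\sigma=F$ is itself independent, $\sigma\subseteq F$, $x\notin F$, and $\sigma\cup\{x\}=M\notin I(M)$. (The stated containment does hold when $(M,I(M))$ is a finitary matroid, since (FC) upgrades the finite tests to infinite $\sigma$; this is why it is true for the vector-space example, but it is not automatic for pi-spaces.) So what you prove is a corrected variant of the corollary rather than the corollary itself, and this should be flagged explicitly instead of silently asserting the opposite nesting; as it stands your first paragraph contradicts both the statement and the ``easy to see'' step in the paper's own proof of $i$.

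Apart from this direction issue, your homotopy-equivalence argument is correct but heavier than the paper's. The paper observes that if $(M,I(M))$ has infinite rank then so do $(M,I(M)_{\fin})$ and $(M,I(M)_W)$, applies \ref{thm:homotopyPosetOfFlats}(2) to each of the three pi-spaces to conclude that all three proper parts are contractible, and notes that any inclusion between contractible posets is then a homotopy equivalence (the finite-rank case being trivial, exactly as in your reduction). You instead verify hypotheses (a) and (b) of \ref{thm:contractibleFlats}(3) for each of the three proper parts relative to the fixed pi-space $(M,I(M))$; these verifications are sound: a subset of finite $\rk_{I(M)}$-rank contains only finite independent sets in any of the three senses, so the non-zero finite-rank flats agree across the three structures, and your circuit argument for finite closedness uses only finite circuits, which are circuits in all three structures. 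This route has the mild advantage of exhibiting the equivalences compatibly over the common subposet $\redm{\L(M,I(M))_{\fin}}$, but the paper's appeal to the already-proved \ref{thm:homotopyPosetOfFlats} is shorter.
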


\begin{proof}
It is easy to see that the inclusion $\L(M,I(M)_{\fin}) \subseteq \L(M,I(M))$ holds.
We show that $\L(M,I(M)) \subseteq \L(M,I(M)_W)$.
Let $F\in \L(M,I(M))$, $x\in M\setminus F$, and $\sigma \in I(M)_W$ such that $\sigma\subseteq F$.
If $\sigma$ is finite, then $\sigma\in I(M,F)$ and so $\sigma\cup \{x\}\in I(M)\subseteq I(M)_W$ is independent.
If $\sigma$ is infinite, then for any finite subset $\tau$ of $\sigma$ we have $\tau\cup \{x\}\in I(M) \subseteq I(M)_W$.
Thus $\sigma\cup \{x\}\in I(M)_W$ is independent.
This proves that $F$ is a flat of $(M,I(M)_W)$, concluding with the proof of the second inclusion.

Finally, if $(M,I(M))$ has finite rank, then $I(M) = I(M)_{\fin} = I(M)_W$ and the three posets of flats are equal (and the inclusions are the identity map).
If $(M,I(M))$ has infinite rank, then so do $(M,I(M)_{\fin})$ and $(M,I(M)_W)$.
Hence, in that case, the three posets of flats are contractible, and in particular the inclusions are homotopy equivalences.
\end{proof}

\begin{example}
    Consider the setting of \ref{ex:graphicmatroid} and let $G$ be a 
    $1$-dimensional CW-complex or, equivalently, an undirected graph with edge set $M=E(G)$ and vertex set $V(G)$. To avoid technicalities 
    we assume $G$ does not contain isolated vertices and loops.
    
    From \ref{ex:graphicmatroid}, we know that the elements of $I(M)_{\fin}$ are the compact elements of $I(M)$ 
    (equivalently the finite forests in 
    $E(G)$) and that  
    $I(M)_W = I(M)$. 
    
    The flats of $M$ are in bijection to the 
    (possibly infinite) partitions of $V(G)$ for which the restriction
    of $M$ to any block is connected. 
    
    Let $G = K_{V(G)}$ be the complete graph on $V(G)$; that is
    the CW-complex such that any two vertices are connected by
    exactly one $1$-cell. Then 
    $\L(M,I(M))$ can be identified with the infinite partitions 
    lattice on $V(G)$
    and coincides with $\L(M,I(M)_{\fin})$.
\end{example}

\section{Posets of closed sets} 
\label{sec:closedSets}

In general, the notions of flat and closed set do not coincide.
Therefore, it makes sense to consider posets of closed sets 
separately.

\begin{definition}
\label{def:closedSubsets}
Let $(M,I(M))$ be a pi-space.
A subset $Y\subseteq M$ is \textit{closed} if $Y = \cl(Y)$.
The poset of closed subsets is the collection
\[ \C(M,I(M)) := \{ Y\subseteq M \tq \cl(Y) = Y\},\]
with ordering induced by inclusion.
\end{definition}

Note that $M$ itself is a closed set, and the set of loops $0_{I(M)} = \{x\in M\tq \{x\}\notin I(M)\}$ is also closed.
Moreover, if $x\in 0_{I(M)}$ and $X\subseteq M$ then $x\in \cl(X)$ as $\{x\}$ is a circuit.
This shows that $\C(M,I(M))$ is a bounded poset whose unique minimal element is $0_{I(M)}$, and its unique maximal element is $M$.

On the other hand, by \ref{lm:flatClosure}, every flat is closed, so we have an inclusion of posets $\L(M,I(M)) \hookrightarrow \C(M,I(M))$.
Moreover, if $X\subseteq M$ has finite rank, then by \ref{lm:finiteRankSets}, $\cl(X)$ is a flat of finite rank, so $\cl(\cl(X)) = \cl(X)$.
Then, if we write $\C(M,I(M))_{\fin}$ for the closed sets of finite rank of $(M,I(M))$, we have
\[ \L(M,I(M))_{\fin} = \C(M,I(M))_{\fin}.\]
Also note by \ref{lm:finiteRankSets}(3) that if every dependent set contains a circuit, then $\cl(X)$ is always a flat, and hence $\L(M,I(M)) = \C(M,I(M))$.
By \ref{lm:finitaryAndCircuits}, finitary matroids satisfy this property.
In particular, if $M$ has finite rank, then $\L(M,I(M)) = \C(M,I(M))$.

Now suppose that $M$ has infinite rank, and take $\P := \red{\C(M,I(M))}$.
If $Y \in \P$ and $\sigma\in I(M,Y)$, then $\cl(\sigma)\subseteq \cl(Y) = Y$, so $Y$ is finitely closed of positive rank.
This shows that $\P$ satisfies the hypotheses of \ref{thm:contractibleFlats}(3), and hence it is contractible.

Finally, the inclusion $\red{\L(M,I(M))} \hookrightarrow \red{\C(M,I(M))}$ is a homotopy equivalence since it is the identity map if $(M,I(M))$ has finite rank, or both posets are contractible otherwise.

The following theorem summarizes these observations on $\C(M,I(M))$:

\begin{theorem}
\label{thm:homotopyTypeClosedSets}
Let $(M,I(M))$ be a pi-space.
Then the following hold:
\begin{enumerate}
    \item $\L(M,I(M)) \subseteq \C(M,I(M))$, with equality if every dependent set contains a circuit.
    \item $\L(M,I(M))_{\fin} = \C(M,I(M))_{\fin}$.
    \item If $M$ has finite rank, then $\L(M,I(M)) = \C(M,I(M))$ is Cohen-Macaulay.
    \item If $M$ has infinite rank, then $\red{\C(M,I(M))}$ is contractible.
    \item The inclusion $\red{\L(M,I(M))} \hookrightarrow \red{\C(M,I(M))}$ is a homotopy equivalence.
\end{enumerate}
\end{theorem}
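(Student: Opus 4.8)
The plan is to obtain all five items by stitching together results already established in this section; none requires new work beyond what has been assembled. I would begin with item (1): the inclusion $\L(M,I(M))\subseteq\C(M,I(M))$ is exactly \ref{lm:flatClosure} (every flat $F$ satisfies $F=\cl(F)$), and when every dependent set contains a circuit, \ref{lm:finiteRankSets}(3) gives that $\cl(X)$ is a flat for every $X\subseteq M$; applying this to a closed set $Y=\cl(Y)$ shows $Y$ is itself a flat, so the two posets coincide. For item (2) I would use \ref{lm:finiteRankSets}(2): a finite-rank closed set $Y$ equals $\cl(Y)$, which is a flat, hence $Y$ is a flat; the reverse inclusion is part of item (1), giving $\L(M,I(M))_{\fin}=\C(M,I(M))_{\fin}$.

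Next I would dispatch item (3): a finite-rank pi-space is a finitary matroid (as noted after \ref{def:finitaryMatroid}), so by \ref{lm:finitaryAndCircuits} every dependent set contains a circuit, whence $\L(M,I(M))=\C(M,I(M))$ by item (1); this poset is Cohen-Macaulay by \ref{thm:homotopyTypeFlatsFiniteRank} (equivalently \ref{thm:homotopyPosetOfFlats}(1)). For item (4), in the infinite-rank case I would set $\P:=\red{\C(M,I(M))}$ and check the hypotheses of \ref{thm:contractibleFlats}(3): every $Y\in\P$ is closed, hence finitely closed ($\cl(\sigma)\subseteq\cl(Y)=Y$ for $\sigma\in I(M,Y)$), and $Y$ has positive rank because, lying in the proper part, it properly contains the minimal element $0_{I(M)}$ and so contains a non-loop; moreover the finite-rank members of $\P$ are precisely $\redm{\L(M,I(M))_{\fin}}$ by item (2). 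Then \ref{thm:contractibleFlats}(3) yields that $\P$ is contractible. Finally, for item (5), the inclusion $\red{\L(M,I(M))}\hookrightarrow\red{\C(M,I(M))}$ is the identity map when $M$ has finite rank (item (3)), and when $M$ has infinite rank its source $\red{\L(M,I(M))}$ is contractible by \ref{thm:homotopyPosetOfFlats}(2) and its target $\red{\C(M,I(M))}$ is contractible by item (4), so it is a homotopy equivalence in either case.

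There is no genuine obstacle here — the statement is a summary of the discussion preceding it — but the step that most warrants care is the verification that $\P=\red{\C(M,I(M))}$ satisfies hypothesis (a) of \ref{thm:contractibleFlats}(3), namely that its finite-rank elements are exactly the non-zero finite-rank flats $\redm{\L(M,I(M))_{\fin}}$. This rests on item (2) together with the small observation that deleting $0_{I(M)}$ from $\C(M,I(M))$ leaves only sets of rank at least one, so that \ref{thm:contractibleFlats}(2) applies with $X$ ranging over the elements of $\P$.
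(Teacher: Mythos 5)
Your proposal is correct and follows essentially the same route as the paper: the inclusion and the finite-rank equality via \ref{lm:flatClosure} and \ref{lm:finiteRankSets}, the finite-rank case via finitary matroids and \ref{thm:homotopyTypeFlatsFiniteRank}, the infinite-rank case by verifying the hypotheses of \ref{thm:contractibleFlats}(3) for $\red{\C(M,I(M))}$, and item (5) by the identity/contractibility dichotomy. Your extra check that proper closed sets have positive rank is exactly the observation the paper makes implicitly, so nothing is missing.
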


The following example shows that the converse on the equality of item (1) above might not hold.

\begin{example}
    Let $M = \{1,2,3,\ldots\}$ and let $I(M)$ be the set of
    all subsets $A$ of $M$ for which there is no number $n$ such that
    $A$ contains all multiples of $n$.
    We claim that there are no circuits in $M$.
    Indeed, if $X$ is a dependent set, then it contains all multiples of a number $n$. But then $Z$ also
    contains all multiples of $2n$, which is a proper dependent 
    subset. Hence, $X$ cannot be a circuit.
    This shows that $\cl(X) = X$ for all $X \subseteq M$. 

    Similarly, we see that every subset of $M$ is a flat.
    Let $X\subseteq M$, $x\in M\setminus X$, and $\sigma\in I(M,X)$.
    Suppose that $\sigma\cup\{x\}$ is a dependent set, so it contains all multiples of a given number $n$.
    As $\sigma$ is independent, all such multiples are attained once we have added $x$.
    This means that $x$ is a multiple of $n$. But then, $\sigma$ must contain all multiples of $2x$, contradicting that $\sigma$ is independent.
    Therefore, $\sigma \cup \{x\}$ must be independent.

    This example shows that $\L(M,I(M)) = \C(M,I(M))$, and $(M,I(M))$ contains infinite descending chains of dependent sets and no circuits.
    Therefore, the converse of \ref{thm:homotopyTypeClosedSets}(1) does not hold.
     % Consider $X = M \setminus \{ 2 \}$ then $M$ is not a 
     % flat because     
\end{example}

We get an analogue of \ref{coro:inclusionsFlats}, with a slight variation on the inclusions.

\begin{corollary}
\label{coro:inclusionsClosedSets}
Let $(M,I(M))$ be a pi-space.
Then we have inclusions
\[ k: \C(M,I(M)_{\fin}) \hookrightarrow \C(M,I(M)_W)\ \text{ and } \ j: \C(M,I(M)) \hookrightarrow \C(M,I(M)_W),\]
which induce homotopy equivalences between their proper parts.
\end{corollary}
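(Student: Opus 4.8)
The plan is to follow the structure of the proof of \ref{coro:inclusionsFlats}, splitting into the finite-rank and infinite-rank cases; but since here the three posets of closed sets are not chained (in fact $\C(M,I(M)_{\fin})$ need not be contained in $\C(M,I(M))$), the first task is to check that $k$ and $j$ are genuine inclusions of posets, i.e.\ that a closed set of $(M,I(M)_{\fin})$ or of $(M,I(M))$ is again closed for $(M,I(M)_W)$. The key observation is that the three pi-spaces $(M,I(M)_{\fin})$, $(M,I(M))$, $(M,I(M)_W)$ agree on finite sets (for finite $\sigma$, membership in $I(M)_W$ is equivalent to membership in $I(M)$, which is equivalent to membership in $I(M)_{\fin}$), so they have the same finite circuits. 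Since $(M,I(M)_W)$ is a finitary matroid, by \ref{lm:finitaryAndCircuits} all of its circuits are finite; hence a circuit of $(M,I(M)_W)$ is precisely a finite circuit of $(M,I(M))$, and likewise a circuit of $(M,I(M)_{\fin})$.

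From this I would record that $\cl_{I(M)_W}(X)\subseteq \cl_{I(M)}(X)$ and $\cl_{I(M)_W}(X)\subseteq \cl_{I(M)_{\fin}}(X)$ for every $X\subseteq M$, because $\cl_{I(M)_W}$ only ever adds a point through a circuit of $(M,I(M)_W)$, which is also a circuit of the other two pi-spaces. Consequently, if $Y$ is closed for $(M,I(M)_{\fin})$ (resp.\ for $(M,I(M))$) then $Y\subseteq \cl_{I(M)_W}(Y)\subseteq \cl_{I(M)_{\fin}}(Y)=Y$ (resp.\ $\subseteq \cl_{I(M)}(Y)=Y$), so $Y$ is closed for $(M,I(M)_W)$; thus $k$ and $j$ are well-defined. (One checks the reverse inclusion of closure operators just as easily, so in fact $\C(M,I(M)_{\fin})=\C(M,I(M)_W)$ and $k$ is the identity, but this is not needed.) Since all three pi-spaces have the same loop set $0_{I(M)}$ and the same top element $M$, the maps $k,j$ restrict to maps between the proper parts.

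Then I would run the dichotomy. If $M$ has finite rank, then $I(M)_{\fin}=I(M)=I(M)_W$, so the three posets of closed sets coincide and $k$, $j$ are identity maps, hence homotopy equivalences on proper parts. If $M$ has infinite rank, then $I(M)$ has arbitrarily large finite independent sets, hence so does $I(M)_{\fin}$, and since $I(M)\subseteq I(M)_W$ the same holds for $I(M)_W$; so all three pi-spaces have infinite rank. Applying \ref{thm:homotopyTypeClosedSets}(4) to each of $(M,I(M)_{\fin})$, $(M,I(M))$ and $(M,I(M)_W)$, the proper parts $\red{\C(M,I(M)_{\fin})}$, $\red{\C(M,I(M))}$ and $\red{\C(M,I(M)_W)}$ are all contractible, and any map between contractible CW-complexes is a homotopy equivalence, so $k$ and $j$ induce homotopy equivalences between the proper parts.

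I expect the only real obstacle to be the first two paragraphs: carefully verifying that passing among $I(M)_{\fin}$, $I(M)$ and $I(M)_W$ does not alter the relevant (finite) circuits, so that the two inclusions of posets genuinely land in $\C(M,I(M)_W)$. Once that bookkeeping is done, both cases reduce immediately to \ref{thm:homotopyTypeClosedSets}(4) (and to the trivial collapse $I(M)_{\fin}=I(M)=I(M)_W$ in finite rank), exactly in parallel with \ref{coro:inclusionsFlats}.
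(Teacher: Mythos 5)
Your proposal is correct and follows essentially the same route as the paper: the key step is that $(M,I(M)_W)$ is finitary, so by \ref{lm:finitaryAndCircuits} its circuits are finite and hence are circuits of both $(M,I(M)_{\fin})$ and $(M,I(M))$, giving $\cl_{I(M)_W}(Y)=Y$ for closed sets $Y$ of either pi-space, after which the finite-rank case collapses to equality of the three posets and the infinite-rank case follows from contractibility via \ref{thm:homotopyTypeClosedSets}. Your extra observations (the comparison of closure operators and the identity $\C(M,I(M)_{\fin})=\C(M,I(M)_W)$) are fine but, as you note, not needed.
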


\begin{proof}
Recall that $(M,I(M)_W)$ is a finitary matroid, so its circuits are finite.
In particular, a circuit of $(M,I(M)_W)$ is a circuit of $(M,I(M)_{\fin})$ and of $(M,I(M))$.
Therefore, if $Y\in \C(M,I(M)_{\fin})$ or $Y\in \C(M,I(M))$, then $\cl_{I(M)_W}(Y) = Y$. 
This proves the inclusions $k$ and $j$.
The homotopy equivalences follow easily as in the proof of \ref{coro:inclusionsFlats}: if $M$ has finite rank, then the three posets are the same, and hence we have homotopy equivalences between their proper parts.
Otherwise, the three matroids $(M,I(M))$, $(M,I(M)_{\fin})$ and $(M,I(M)_W)$ have infinite rank and hence their corresponding posets of closed subsets have a contractible proper part by \ref{thm:homotopyTypeClosedSets}.
\end{proof}

The following example shows that, in contrast to \ref{coro:inclusionsFlats}, we might not have an inclusion $\C(M,I(M)_{\fin}) \subseteq\C(M,I(M))$.

\begin{example}
Let $(M,I(M))$ be a pi-space such that $M$ is infinite, $I(M)$ contains every finite subset of $M$, and $\C(M,I(M)) \neq 2^M$.
Then $(M,I(M))$ must have circuits, but $(M,I(M)_{\fin})$ has no circuits since its dependent sets are exactly the infinite subsets of $M$ (see \ref{ex:finiteSubsetsIndependent}).
In particular, $\C(M,I(M)_{\fin}) = 2^M \not\subseteq \C(M,I(M))$.

For example, if $I(M)$ is the set of proper subsets of $M$, then $\C(M,I(M))$ equals all subsets of $M$ except for those of the form $M\setminus \{x\}$, with $x\in M$.
\end{example}

In classical matroid theory, we have that a subset $F$ of the underlying set $M$ of a matroid is a flat if and only if it is closed.
Moreover, the closure of any subset of $M$ is a flat.
However, in our more general context, this property is not immediate.
This observation and \ref{thm:homotopyTypeClosedSets} motivate the following questions.

\begin{question}
\label{question:flatsVsClosedSets}
Let $(M,I(M))$ be a pi-space, and let $X\subseteq M$.
\begin{enumerate}
    \item Is $\cl(X)$ a closed set?
    \item Is $\cl(X)$ a flat?
    \item Is $\C(M,I(M))$ a (complete) lattice?
    \item When do we have $\C(M,I(M)) = \L(M,I(M))$?
\end{enumerate}
\end{question}

Note that if $\cl(X)$ is a flat, then $\cl(\cl(X)) = \cl(X)$ by \ref{lm:flatClosure}, so it is closed.
Therefore, if (2) above holds, then (1), (3), and (4) also hold.
It follows for example by \ref{lm:finiteRankSets}(3), that $\cl(X)$ is a flat for every subset $X$ if every dependent set contains a circuit.
The latter property holds for finitary matroids, so all items in \ref{question:flatsVsClosedSets} have a positive answer in that case.

The following example shows that closed sets are not always flats, and in general $\C(M,I(M)) \neq \L(M,I(M))$.

\begin{example}
Let $M$ be a vector space of dimension at least two over an infinite field, equipped with a non-degenerate orthogonal form.
For instance, we can take $M$ to be an Euclidean space of dimension at least two with the classical inner product.
Let $I(M)$ consist of sets of vectors of $M$ that contain only finitely many pairs of orthogonal vectors.
Clearly, $I(M)$ contains every finite set of vectors of $M$.
Hence, $(M,I(M))$ is a pi-space.

Now, a dependent set is a set of vectors $S$ of $M$ containing an infinite number of pairs of orthogonal vectors.
Then, either there is a vector $v\in S$ that is orthogonal to an infinite number of vectors in $S$, or every vector in $S$ is orthogonal to only a finite number of vectors in $S$.
In the former case, we can take any $w\in S \setminus \{v\}$ orthogonal to $v$, and then $S\setminus \{w\}$ is a dependent set.
In the latter case, removing any vector from $S$ yields again a dependent set.
Therefore, $S$ contains no circuits, and, in particular, $(M,I(M))$ has no circuits.
This shows that every subset of $M$ is closed.

On the other hand, consider a vector $v\in M$ that is not isotropic (i.e., it is not orthogonal to itself), and let $X$ be the subspace generated by $v$ after removing the zero vector.
Then $X$ is an independent set.
As $M$ has dimension at least two, there is a non-zero orthogonal vector $w$ to $v$.
Since $X\cup \{w\}$ has an infinite number of pairs of orthogonal vectors, we see that $X$ is not a flat.
Thus, $X\in \C(M,I(M)) \setminus \L(M,I(M))$.
\end{example}

We close this section with an example of a pi-space with an infinite chain of proper inclusions of a set into its closure.

\begin{example}
\label{ex:infiniteAscChainNonClosed}
Let $M$ be the set of natural numbers, and let $(a_n)_{n\geq 1}$ and $(b_n)_{n\geq 1}$ be two sequences of pairwise-distinct natural numbers such that $a_n\neq b_m$ for all $n,m$, and such that $M\setminus \{a_n,b_n \tq n\geq 1\}$ is infinite.
Write $P_n = \{a_m \tq m > n\} \cup \{b_1,\ldots,b_n\}$.
%Also, as $1\neq a_n$ for all $n$, we see that $(a_n)_{n\geq 1}$ gives rise to an infinite set of pairwise distinct numbers whose complement in $M$ is also infinite.
Therefore, the $P_n$ are infinite sets, pairwise incomparable (that is, $\{P_n \tq n\geq 1\}$ is an antichain in the poset of subsets of $M$).
Let $I(M)$ consist of all subsets of $M$ which do not contain any $P_n$.
As $I(M)$ contains all finite subsets of $M$, $(M,I(M))$ is a pi-space.
Finally, we note that the dependent sets of $M$ are exactly the subsets of $M$
containing some $P_n$. Since the $P_n$ are pairwise incomparable, they are also the circuits.

Now, let $X_1 = \{a_n \tq n\geq 2\}$, and $X_{n+1} = \cl(X_{n})$.
We claim that
\[ X_{n+1} = \{a_m \tq m\geq 2\} \cup \{b_1,\ldots,b_{n}\} = X_{n}\cup \{b_n\}.\]
Let $x\in X_{n+1} \setminus X_n$.
Then there is a circuit $C$ such that $x\in C \subseteq X_n\cup \{x\}$.
Thus, $C = P_m$ for some $m\geq 1$.
As $a_k\in X_n$ for all $k\geq 2$ and $a_1\notin P_m$, we see that $x\in \{b_1,\ldots, b_m\}$.
Hence, by induction on the description of $X_n$, we must have $\{ b_1,\ldots,b_{m}\} \subseteq \{b_1,\ldots,b_{n-1}\} \cup \{x\}$,
From this, we see that $m = n$ and $x = b_n$.
As this argument also shows that indeed $b_n\in X_{n+1}$, we conclude that $X_{n+1} = X_n\cup \{b_n\}$.

We have produced an infinite chain of proper inclusions $X_n\subsetneq X_{n+1}$ with $X_{n+1} = \cl(X_n)$.
This example then shows that the closure of a subset of a pi-space might not be closed, and that it might even give rise to an infinite ascending chain of non-closed sets.
In particular, no element of this chain can be a flat (cf. \ref{lm:flatClosure}).
\end{example}

\appendix

\section{Homotopy Tools}
\label{app:homotopy}

Our main tool for homotopy equivalences will be Quillen's fiber theorem.
In this appendix, we state the theorem and recall the necessary definitions from topology and combinatorics.

Let $m$ be an integer, and $X$ a topological space.
Then $X$ is said to be $m$-connected if $\pi_n(X,x_0) = 1$ for all $n < m$ and $x_0\in X$.
By convention, every topological space is $m$-connected if $m\leq -2$, and a space is $(-1)$-connected if and only if it is non-empty.
A continuous map between topological spaces $f:X\to Y$ is an $m$-equivalence if $f$ induces an isomorphism between homotopy groups of degree $<m$, and an epimorphism in degree $m$.
If $K$ is a finite-dimensional simplicial complex of finite dimension $\dim K$, then $K$ is said to be spherical if it is $(\dim K - 1)$-connected.
Equivalently, $K$ has the homotopy type of a wedge of spheres of dimension $\dim K$.
We say that $K$ is (homotopically) Cohen-Macaulay if for every simplex $\sigma \in K$, the link $\Lk_K(\sigma)$ is spherical of dimension $\dim K - |\sigma|$.
Note that, in particular, if $K$ is Cohen-Macaulay, then it is spherical by taking $\sigma$ the empty simplex.

If $Q$ is a poset and $x \in Q$, then we write $Q_{\leq x}$ for the subposet
of all $x' \in Q$ with $x' \leq x$.
If $P\subseteq Q$, we will also write $P_{\leq x}$ even if $x\in Q\setminus P$.
Analogously defined are 
$Q_{< x}$, $Q_{\geq x}$ and $Q_{> x}$.
For two posets $P$ and $Q$ on disjoint sets we write $P*Q$ for the 
poset on $P \cup Q$ which inherits the order relations among
elements of $P$ and among elements of $Q$, and sets $y < x$ for all
$y \in P$ and $x \in Q$. 
The height of $Q$ is the 
supremum of the numbers $n$ for which there is a 
strictly ascending chain $x_0 < \cdots < x_n$ in $Q$. Finally,
we always consider a poset both as a combinatorial object and, 
via its order complex, as a topological space.
We denote by $|Q|$ the geometric realization of a poset $Q$.
In particular, order-preserving maps between posets induce continuous maps between their geometric realizations.
If $f,g:P\to Q$ are order-preserving maps and $f(y)\leq g(y)$ for all $y\in P$, then $f,g$ give rise to homotopic maps.
Finally, $|P*Q| \cong |P|*|Q|$, and, in general, the join of an $m$-connected space with an $n$-connected space is $(m+n+2)$-connected.

\begin{theorem}
[Quillen's fiber theorem]
\label{thm:quillen}
Let $P,Q$ be two posets, and $f:P\to Q$ an order-preserving map.
\begin{enumerate}
   \item Let $m\geq -1$, and suppose $P,Q$ have finite height. If $f^{-1}(Q_{\leq x}) * Q_{>x}$ is $m$-connected for all $x\in X$, then $f$ is an $(m+1)$-equivalence.
    \item If $f^{-1}(Q_{\leq x})$ is contractible for all $x\in Q$, then $f$ is a homotopy equivalence.
\end{enumerate}
\end{theorem}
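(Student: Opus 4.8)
The statement is Quillen's classical fiber theorem, and the route I would take is the standard one, organized around the \emph{mapping cylinder poset}
\[ E \;=\; \{(p,x)\in P\times Q : f(p)\leq x\}, \qquad \pi_P(p,x)=p, \quad \pi_Q(p,x)=x. \]
The first step is elementary and uses only the ``comparable order-preserving maps are homotopic'' principle recorded in the appendix. The section $\sigma: P\to E$, $\sigma(p)=(p,f(p))$, satisfies $\pi_P\circ\sigma=\id_P$, while $\sigma\circ\pi_P$ sends $(p,x)$ to $(p,f(p))$, which is $\leq(p,x)$; hence $\sigma\circ\pi_P\simeq\id_E$ and $\sigma$ is a homotopy equivalence. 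Since moreover $\pi_Q\circ\sigma=f$ on the nose, $f$ is an $(m+1)$-equivalence (resp.\ a homotopy equivalence) if and only if $\pi_Q$ is. A second application of the same principle shows that $(p,x')\mapsto(p,x)$ is a homotopy inverse to the inclusion $f^{-1}(Q_{\leq x})\times\{x\}\hookrightarrow\pi_Q^{-1}(Q_{\leq x})$, so $\pi_Q^{-1}(Q_{\leq x})\simeq f^{-1}(Q_{\leq x})$; therefore the hypotheses of parts (1) and (2) translate verbatim into the corresponding statements about $\pi_Q$. The payoff is that $\pi_Q$ is the Grothendieck construction of the covariant diagram $F: Q\to\mathrm{Poset}$, $x\mapsto f^{-1}(Q_{\leq x})$ (with structure maps the inclusions $f^{-1}(Q_{\leq x})\hookrightarrow f^{-1}(Q_{\leq x'})$): indeed $E$ is literally $\int_Q F$, and $\pi_Q$ is the canonical projection.

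For part (2), the decisive input is that the realization of a Grothendieck construction is the homotopy colimit of the realized diagram: $|E|\simeq\operatorname*{hocolim}_{x\in Q}|f^{-1}(Q_{\leq x})|$, naturally over $|Q|\simeq\operatorname*{hocolim}_{x\in Q}\,\ast$. If each $f^{-1}(Q_{\leq x})$ is contractible, the map of diagrams $F\Rightarrow\ast$ is an objectwise weak equivalence, so by homotopy-invariance of the homotopy colimit it induces a weak equivalence $|E|\to|Q|$ --- which is $\pi_Q$ up to homotopy --- and hence, working with CW complexes, a homotopy equivalence. Combined with the previous paragraph this proves that $f$ is a homotopy equivalence, and no finiteness of $P$ or $Q$ has been used.

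For part (1) I would run the same reduction to $\pi_Q$ and then invoke the connectivity-graded form of this estimate: for a homotopy colimit over a poset $Q$ \emph{of finite height}, the canonical map $\operatorname*{hocolim}_Q F\to|Q|$ is an $(m+1)$-equivalence as soon as $|F(x)|\ast|Q_{>x}|$ is $m$-connected for every $x\in Q$. I would prove this by a finite induction over $Q$ in which incorporating a new element $x$ realizes $|E|$ as a homotopy pushout whose attaching data is controlled by the link of $x$ in $|Q|$, namely $|Q_{<x}\ast Q_{>x}|$, together with the fiber $|f^{-1}(Q_{\leq x})|$; the $Q_{<x}$-direction drops out because $|Q_{\leq x}|=|Q_{<x}\ast\{x\}|$ is a cone, leaving $|f^{-1}(Q_{\leq x})|\ast|Q_{>x}|$ as the relevant space, and a gluing lemma together with the Blakers--Massey connectivity estimate propagates the bound $m$. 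This is exactly the bookkeeping carried out in Quillen's original paper and in Bj\"orner's survey, which I would cite rather than reproduce; note that part (2) is recovered as the limiting case, since if the fibers are contractible then $|f^{-1}(Q_{\leq x})|\ast|Q_{>x}|$ is contractible and the estimate holds for all $m$ (modulo the harmless finite-height hypothesis).

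The only genuinely non-formal ingredient --- and the step I expect to be the main obstacle --- is this passage from ``the (join-weighted) fibers are highly connected'' to ``$f$ is an $(m+1)$-equivalence'': the reduction to the co-fibered map $\pi_Q=\int_Q F\to Q$ is soft, but extracting the conclusion needs real homotopy-theoretic input, namely homotopy-invariance of homotopy colimits for part (2) and the gluing lemma plus the Blakers--Massey estimate for part (1). A secondary, purely clerical point is to keep the paper's indexing convention for ``$m$-connected'' and its behaviour under joins ($|A\ast B|$ is $(a+b+2)$-connected when $A$ is $a$-connected and $B$ is $b$-connected) aligned throughout the induction; the conventions fixed in the appendix are set up precisely so that the indices match.
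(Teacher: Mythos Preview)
The paper does not actually prove this theorem: it records it as a standard result, citing \cite[Theorem~A.1]{PS} for part~(1) (with the remark that the extension from finite to finite-height posets is routine) and asserting that part~(2) follows from~(1), with a pointer to Quillen's original \cite{Qui78}. Your proposal, by contrast, outlines a genuine proof via the mapping-cylinder poset $E$ and the identification of $\pi_Q\colon E\to Q$ with the projection from a Grothendieck construction; this is a correct and standard route (essentially Quillen's own, recast in the language of homotopy colimits via Thomason's theorem). One point worth flagging: the paper's claim that (2) is ``an immediate consequence of~(1)'' is only literally true under the finite-height hypothesis of~(1), yet part~(2) is stated---and used elsewhere in the paper, e.g.\ in \ref{prop:finReduction}---without that hypothesis. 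You handle this correctly by treating~(2) separately through homotopy-invariance of homotopy colimits, which needs no finiteness; so your argument is in this respect more careful than the paper's one-line deduction.
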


Assertion (2) of \ref{thm:quillen} is an immediate consequence of
(1) (see also \cite{Qui78}). A proof of (1) can be found in \cite[Theorem A.1]{PS} for the finite case, but it easily extends to finite-height infinite posets.

\end{document}